\documentclass[11pt]{article}

\usepackage[T1]{fontenc}
\usepackage[utf8]{inputenc}
\usepackage[margin=2.54cm]{geometry}
\usepackage{microtype}
\usepackage{comment}
\usepackage{amsmath,amssymb,amsthm,mathtools,mathrsfs}
\usepackage{thmtools}
\usepackage{thm-restate}
\numberwithin{equation}{section}
\allowdisplaybreaks

\usepackage{booktabs}
\usepackage{array,multirow}
\usepackage{float}
\usepackage{subcaption}
\usepackage[export]{adjustbox}
\usepackage{enumitem}
\usepackage{placeins}
\usepackage{sectsty}
\usepackage{graphicx}
\usepackage{esvect}
\usepackage{xcolor}
\usepackage[backref=section]{hyperref}

\definecolor{mycitegreen}{HTML}{00A99D}
\hypersetup{
   colorlinks=true,
  linkcolor={blue!85!black},
  citecolor={red!85!black},
  urlcolor=mycitegreen
}
\usepackage[noabbrev]{cleveref}

\usepackage{tikz}
\usetikzlibrary{   arrows.meta,
    calc,
    shapes.geometric,
    decorations.markings}
\tikzset{
    cluster/.style={
        draw,
        circle,
        line width=1.1pt,
       minimum size=10mm,
        inner sep=0pt,
        outer sep=0pt
    },
    aset/.style={
        draw,
        ellipse,
        line width=1.1pt,
        minimum width=2.25cm,
        minimum height=1.35cm,
        inner sep=0pt,
        outer sep=0pt
    },
    bridgepoint/.style={
        circle,
        fill=black,
        draw=black,
        minimum size=3.2pt,
        inner sep=0pt,
        outer sep=0pt
    },
    contactdot/.style={
        circle,
        fill=black,
        draw=black,
        minimum size=2.6pt,
        inner sep=0pt,
        outer sep=0pt
    },
    blackline/.style={
        draw=black,
        line width=2.1pt,
        shorten >=1.2pt,
        shorten <=1.2pt
    },
    blackarrowline/.style={
        draw=black,
        line width=2.1pt,
        shorten >=1.2pt,
        shorten <=1.2pt,
        postaction={
            decorate,
            decoration={
                markings,
                mark=at position 0.9 with {
                    \arrow{
                        Stealth[
                            length=2.1mm,
                            width=1.95mm
                        ]
                    }
                }
            }
        }
    },
    redline/.style={
        draw=red,
        line width=1.15pt,
        shorten >=0pt,
        shorten <=0pt,
        postaction={
            decorate
        },
        decoration={
            markings,
            mark=at position 0.55 with {
                \arrow{stealth}
            }
        }
    }
}

\tikzset{
    setcircle/.style={
        draw=black,
        circle,
        line width=1.35pt,
        minimum size=19mm,
        inner sep=0pt,
        outer sep=0pt
    },
  setcircleA/.style={
    draw=black,
    circle,
    line width=1.35pt,
    minimum size=15mm,
    inner sep=0pt,
    outer sep=0pt
},
    mainarrow/.style={
        draw=black,
        line width=2.4pt,
        -{Stealth[
            length=2.5mm,
            width=2.1mm
        ]},
        shorten >=3pt,
        shorten <=3pt
    },
   densecurve/.style={
    draw=black,
    line width=2.4pt
}
}

\newcommand{\manualstealtharc}[6]{%

    \draw[densecurve]
        ($(#1.center)+(#3:#2cm)$)
        arc[
            start angle=#3,
            end angle=#4,
            radius=#2cm
        ];

    \coordinate (arrowtip-#1) at
        ($(#1.center)+(#5:#2cm)$);

    \pgfmathsetmacro{\tangentangle}{#5+90+#6}

    \draw[
        draw=black,
        line width=1.1pt,
        -{Stealth[
            length=2.7mm,
            width=2.1mm
        ]}
    ]
        ($(arrowtip-#1)+(\tangentangle+180:0.16cm)$)
        --
        (arrowtip-#1);
}

\usepackage{listings}
\definecolor{codebg}{RGB}{248,248,248}
\definecolor{codeframe}{RGB}{220,220,220}
\definecolor{codecomment}{RGB}{0,96,96}
\definecolor{codestring}{RGB}{128,0,0}
\definecolor{codekeyword}{RGB}{40,40,180}

\lstdefinestyle{pythonstyle}{
    language=Python,
    basicstyle=\ttfamily\small,
    numbers=left,
    numberstyle=\scriptsize\color{gray},
    stepnumber=1,
    numbersep=8pt,
    breaklines=true,
    showstringspaces=false,
    columns=fullflexible,
    frame=single,
    rulecolor=\color{codeframe},
    backgroundcolor=\color{codebg},
    keywordstyle=\color{codekeyword},
    commentstyle=\itshape\color{codecomment},
    stringstyle=\color{codestring}
}
\sectionfont{\centering}

\theoremstyle{plain}
\newtheorem{theorem}{Theorem}[section]

\newtheorem{lemma}[theorem]{Lemma}
\newtheorem{proposition}[theorem]{Proposition}

\newtheorem{conjecture}[theorem]{Conjecture}

\newtheorem{claim}[theorem]{Claim}
\newtheorem{case}{Case}[section]

\theoremstyle{definition}
\newtheorem{definition}[theorem]{Definition}

\newtheorem{question}[theorem]{Question}

\theoremstyle{remark}

\crefname{theorem}{Theorem}{Theorems}
\crefname{lemma}{Lemma}{Lemmas}
\crefname{proposition}{Proposition}{Propositions}
\crefname{corollary}{Corollary}{Corollaries}
\crefname{conjecture}{Conjecture}{Conjectures}
\crefname{problem}{Problem}{Problems}
\crefname{claim}{Claim}{Claims}
\crefname{observation}{Observation}{Observations}
\crefname{fact}{Fact}{Facts}
\crefname{definition}{Definition}{Definitions}
\crefname{construction}{Construction}{Constructions}
\crefname{question}{Question}{Questions}
\crefname{setup}{Setup}{Setups}
\crefname{example}{Example}{Examples}
\crefname{remark}{Remark}{Remarks}
\crefname{section}{Section}{Sections}
\Crefname{section}{Section}{Sections}
\crefname{subsection}{Subsection}{Subsections}
\Crefname{subsection}{Subsection}{Subsections}
\crefname{subsubsection}{Subsubsection}{Subsubsections}
\Crefname{subsubsection}{Subsubsection}{Subsubsections}
\crefrangelabelformat{lemma}{#3#1#4--#5#2#6}

\makeatletter
\newcommand{\labelinthm}[1]{%
  \label{temp#1}%
  \protected@write\@auxout{}{%
    \string\newlabel{#1}{{\emph{\ref{temp#1}}}{\thepage}{\emph{\ref{temp#1}}}{temp#1}{}}%
  }%
}
\makeatother

\title{The exact total degree threshold for the square
of a Hamilton cycle in digraphs}
\author{
Zhilan Wang\thanks{School of Mathematics, Shandong University, Jinan 250100, China. Supported by the National Natural Science Foundation of China (No.~12571373).}
\and
Shuo Wei\footnotemark[1]
\and
Jin Yan\footnotemark[1]
\thanks{Corresponding author. Email: \href{mailto:yanj@sdu.edu.cn}{\texttt{yanj@sdu.edu.cn}}}
}
\date{}

\begin{document}
\maketitle
\vspace{-1em}
\begin{abstract}
\noindent The P\'{o}sa-Seymour conjecture establishes the minimum degree threshold required to guarantee the presence of the $k$th power of a Hamilton cycle in a graph. Following numerous partial results, Koml\'{o}s, S\'{a}rk\"{o}zy, and Szemer\'{e}di confirmed the conjecture holds for all sufficiently large graphs. Treglown later conjectured the analogous minimum semi-degree threshold for forcing the $k$th power of a Hamilton cycle in a digraph. Subsequently, DeBiasio et al. proposed a conjecture on the minimum total degree threshold for the same problem. In this paper we settle the conjecture of DeBiasio et al. for $k=2$. Specifically, we prove that every sufficiently large $n$-vertex
digraph with minimum total degree at least $8n/5-c$ contains the square of a Hamilton cycle, where $c=2$ if
$n\equiv2,4\pmod 5$, and $c=1$ otherwise.


\end{abstract}
\noindent{\bf Keywords:} Powers of Hamilton cycles; digraphs; regularity lemma; minimum degree; stability
\noindent{\bf Mathematics Subject Classifications:}\quad 05C07, 05C20, 05C35, 05C38
\section{Introduction}
Hamiltonicity is one of the most central notions in graph theory, and it has been extensively studied by numerous researchers. Although deciding whether a graph is Hamiltonian is NP-complete, many important results provide sufficient conditions for Hamiltonicity. A classical theorem of Dirac \cite{Dirac} states that every graph $G$ on $n\geq3$ vertices with minimum degree at least $n/2$ contains a Hamilton cycle. The \emph{$k$th power} of a Hamilton cycle is a widely studied strengthening of Hamiltonicity. Formally, it is obtained by joining every pair of vertices whose distance along the Hamilton cycle is at most $k$. The case $k=2$ is called the \emph{square} of a Hamilton cycle. In 1962, P\'{o}sa \cite{Posa} conjectured that every $n$-vertex graph $G$ with $\delta(G)\geq2n/3$ contains the square of a Hamilton cycle, thereby extending Dirac's theorem. Seymour \cite{Seymour} later proposed the following generalization:
\begin{conjecture}[{\cite{Posa, Seymour}}]\label{c}
Let $n$ and $k$ be positive integers with $n\geq k+1$ and $k\geq2$. If $G$ is a graph on $n$ vertices and the minimum degree $\delta(G)\geq\frac{k}{k+1}n$, then $G$ contains the $k$th power of a Hamilton cycle.
\end{conjecture}
Note that the minimum degree condition in Conjecture \ref{c} is tight. Significant progress on the problem emerged only in the $1990$s. Following earlier partial results towards the P\'{o}sa-Seymour conjecture, see \cite{Fan1, Fan2, Fan3, Fan4, Fan5, Komlos, Komlos1}, Koml\'{o}s, S\'{a}rk\"{o}zy, and Szemer\'{e}di \cite{Komlos2} leveraged Szemer\'{e}di's regularity lemma to establish Conjecture \ref{c} for all sufficiently large graphs. Subsequently, the proof of P\'{o}sa's conjecture for large graphs was obtained using more elementary arguments, avoiding the regularity lemma entirely \cite{Chau,Levitt}.

\smallskip

For digraphs, results on powers of Hamilton cycles were scarce until recently. The \emph{minimum semi-degree} $\delta^0(D)$ of a digraph $D$  is the minimum of all the in- and out-degrees of the vertices in $D$. The \emph{minimum total degree} $\delta(D)$ is the minimum number of arcs incident to a vertex in $D$. In addition to extremal embedding questions, the algorithmic detection and property testing of prescribed directed subgraphs have also been studied; see, for example, Alon and Shapira \cite{Alon}. In the setting of tournaments, Bollob\'{a}s and H\"{a}ggkvist first \cite{Bollobas} proved that for fixed $k\in\mathbb{N}$, every $n$-vertex tournament $D$ with $\delta^0(D)\geq(1+o(1))n/4$ contains the $k$th power of a Hamilton cycle. Recently, Dragani\'{c}, Munh\'{a} Correia and Sudakov \cite{Draganic1} showed that one can take $\delta^0(D)\geq n/4+cn^{1-1/\lceil k/2\rceil}$ for some constant $c=c(k)$. Dragani\'{c} et al. \cite{Draganic} proved that every tournament contains the $k$th power of a directed path of length at least $\frac{n}{2^{4k+6}k}$ and a square path of length at least $\lceil2n/3\rceil-1$.

Determining the minimum semi-degree condition for the square of
a Hamilton cycle in general digraphs appears to be significantly more
difficult. It was first posed by Treglown~\cite{Treglown} and later
formulated explicitly by DeBiasio et al.~\cite{Debiasio} as the following
question, which remains completely open even for $k=2$.

\begin{question}[\cite{Debiasio}]\label{ques}
Does every $n$-vertex digraph $D$ with $\delta^0(D)\geq \frac{k}{k+1}n$ contain the $k$th power of a Hamilton cycle?
\end{question}

\smallskip

While the above results focus on minimum semi-degree conditions, a closely related direction concerns the minimum total degree threshold for the same problem. The minimum total degree threshold for the $k$th power of a Hamilton cycle in a digraph was proposed by DeBiasio et al. \cite{Debiasio}.
\begin{conjecture}[\cite{Debiasio}]\label{conj2} Let $k\in\mathbb{N}$ and suppose that $n\in\mathbb{N}$ is sufficiently large. Write $n=(k+3)q+r$, where $q$ is a nonnegative integer and $0\leq r\leq k+2$. Every $n$-vertex digraph $D$ satisfying \begin{align*}
\delta(D)\geq
\begin{cases}
2\lceil(1-\frac{1}{k+3})n\rceil-3, & \text{if }r=k+2,\\
2\lceil(1-\frac{1}{k+3})n\rceil-2, & \text{if }r\in\{k,k+1\},\\
2\lceil(1-\frac{1}{k+3})n\rceil-1, & \text{otherwise}, \end{cases} \end{align*} contains the $k$th power of a Hamilton cycle. \end{conjecture}
If it is true, then the minimum total degree condition is tight, see \cite{Debiasio}. In particular, Ghouila-Houri's theorem \cite{Ghouila} implies that Conjecture \ref{conj2} holds for $k=1$ since an $n$-vertex digraph $D$ with $\delta(D)\geq\lfloor\frac{3n}{2}\rfloor-1$ is strongly connected. For the case of $k=2$, their paper also gave the following result.
\begin{theorem}[\cite{Debiasio}]
For any $\eta>0$, there exists $n_0\in\mathbb{N}$ such that for any $n\geq n_0$, if $D$ is an $n$-vertex digraph with $\delta(D)\geq(\frac{8}{5}+\eta)n$, then $D$ contains the square of a Hamilton cycle.
\end{theorem}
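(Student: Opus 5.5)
We plan to prove this asymptotic statement with the absorbing method combined with the regularity lemma. Throughout, $D$ is an $n$-vertex digraph with $\delta(D)\geq(\frac85+\eta)n$. Let $G$ be the underlying graph whose edges are the double arcs of $D$. Since every vertex has total degree at least $(\frac85+\eta)n$ and at most $2(n-1)$, each vertex lies in at least $(\frac35+\eta)n$ double arcs, so $\delta(G)\geq(\frac35+\eta)n$.

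The first step is to find small ``square-path'' gadgets. A directed square path $v_1v_2\cdots v_\ell$ needs the arcs $v_iv_{i+1}$ and $v_iv_{i+2}$. We will use three properties, each proved by counting common neighbourhoods under the total degree condition.

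\emph{Connecting.} For any two ordered pairs $(a,b)$ and $(c,d)$ that are ends of square paths (pairs spanning an arc), there are $\Omega(n^{t})$ short square paths of constant length $t$ starting with $ab$ and ending with $cd$. The key counting fact is this: for any arc $ab$, the set $N^+(a)\cap N^+(b)$ has size at least about $(\frac15+2\eta)n$, because the out-neighbourhoods are large on average. Using double arcs heavily, we show that the ``pair digraph'' on arcs, where $ab\to bc$ whenever $abc$ is a transitive triple, has good expansion, so short walks between any two arcs exist in abundance.

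\emph{Absorbing.} For every vertex $v$ there are $\Omega(n^{s})$ square paths $P$ of fixed length $s$ such that $v$ can be inserted into the interior of $P$ to give a square path with the same end-pairs. Here the natural absorber for $v$ is a path $x_1x_2x_3x_4$ with $x_1,x_2,x_3,x_4$ all in the neighbourhood of $v$ with suitable orientations. Since $v$ has degree at least $(\frac85+\eta)n$, four mostly-double neighbours can be found, and common-neighbourhood counts give positively many such paths.

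\emph{Reservoir.} A random set $R$ of size $\gamma n$ keeps all the relevant degrees proportionally.

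Given these properties, the standard argument runs as follows. We select a random family of disjoint absorbers and connect them, via the reservoir, into one absorbing square path $A$ that can absorb any set of at most $\beta n$ leftover vertices.

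The second step is almost-covering. We apply the degree form of the diregularity lemma to $D\setminus(A\cup R)$ and obtain a reduced digraph $R^*$ that inherits $\delta(R^*)\geq(\frac85+\eta/2)|R^*|$. We then need a fractional (or integral) tiling of $R^*$ by a structure that supports square paths in regular pairs. We take the tile to be the square of a directed cycle $C_5^2$, or more simply a ``square-path-friendly'' tile such as a transitive-like tournament on $3$ vertices with double arcs where needed. The threshold $\frac85=2-\frac25$ matches $k+3=5$, suggesting tiles on $5$ clusters. By a Hajnal–Szemerédi-type tiling argument, as in the Koml\'os–S\'ark\"ozy–Szemer\'edi approach to P\'osa, we cover all but $\epsilon|R^*|$ clusters with such tiles. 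In each blown-up tile we embed long square paths by the blow-up lemma, or greedily using regularity, which covers all but $\beta n/2$ vertices with a bounded number of long square paths. We then link these paths and $A$ into one square cycle through the reservoir using the connecting property, and finally absorb the leftover vertices together with the unused reservoir vertices using $A$.

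The main obstacle is the tiling step. We must show that total degree $(\frac85+\eta)n$ forces an almost-perfect tiling of the reduced digraph by tiles that admit spanning square paths in their blow-ups, and that these tiles are connected in the pair digraph. We would first try a fractional tiling via LP duality using $5$-vertex tiles consisting of a double-arc $C_5$ with suitably oriented chords. If that fails, we would fall back on tiles mixing double triangles and transitive structures, weighted so that their average degree matches $\frac85$.
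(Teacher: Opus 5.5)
\textbf{There is a genuine gap: the connecting property, which carries almost all of the difficulty, is asserted, and the reason you give for it does not work.}

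Common-neighbourhood counting gives $|N^+(a)\cap N^+(b)|\ge(1/5+2\eta)n$ for every arc $ab$. It does not show that the pair digraph $\mathcal A_2(D)$ (with $ab\to bc$ whenever $ac\in A(D)$) is even strongly connected. For instance, passing from a digon arc $vx$ to $vy$ along the $2$-walk $vxwvy$ requires $w\in N_G(v)\cap N^+(x)\cap N^-(y)$. That is an intersection of three sets of size about $3n/5$, and inclusion--exclusion does not force it to be nonempty.

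In the paper this statement is a special case of Theorem~\ref{szc2}. Since $\delta(D)\ge(8/5+\eta)n$ gives $d_S(v)\ge(\eta-2\gamma)n$ for every $v$ in any set $S$ of size about $n/5$, such a $D$ is never $(1/5,\gamma)$-extremal, so only the non-extremal argument is used. There, strong connectivity of $\mathcal A_2(D-Z)$ is Claim~\ref{cl:conn-state}, and it is proved structurally:
\begin{itemize}
\item the two arcs of a digon lie in one strong component;
\item all six arcs of a double triangle lie in one component;
\item every component contains a digon arc;
\item a mixed-vertex analysis excludes a nontrivial source component.
\end{itemize}
Even this only yields long $2$-walks that may repeat vertices. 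To get $2$-paths of bounded order, Lemma~\ref{connecting-lemma} uses six-vertex connectors $abpqcd$ with $pq$ an arc inside $P_e\cap Q_f$. These exist whenever $p_f\le p_e+5\gamma n/2$, because $p_f+q_f\ge(6/5-\gamma)n$. The construction follows the walk only until $p_e=|N^+(a)\cap N^+(b)|$ has grown by more than $5\gamma n/2$, which bounds the number of rounds.

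Your slack $\eta$ makes every set of size $(1/5+\eta)n$ span an arc, which replaces non-extremality in the auxiliary steps. It does not remove the need for this structural argument. Without it, your ``$\Omega(n^t)$ connecting paths of constant length'' is unsupported, and the absorbing path, the reservoir and the final linking all rest on it.

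\textbf{The step you call the main obstacle is not one, and $8/5$ is not dictated by it.} Almost covering already works at total degree $(3/2+\eta)n$ (Lemma~\ref{cover1}). The multigraph tiling theorem, Lemma~\ref{lem:noextremal tiling} with $r=4$, gives an almost perfect $\overline{\mathcal K}_4$-tiling of the reduced multigraph. Each tile orients to contain $C_4^2$ (Lemma~\ref{lemmq4.10}), and each blown-up tile carries an almost spanning $2$-path (Lemma~\ref{lemma4.9}), exactly as in the proof of Lemma~\ref{lem:stable-extremal}. The same theorem with $r=5$ also gives your $C_5^2$ tiles, so no LP-duality argument is needed.

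Your $3$-cluster option, however, fails. A $2$-path in a blow-up of three clusters must visit them periodically, which forces both orientations on every pair, i.e.\ a double triangle. Now take a reduced multigraph on $t$ clusters in which a set of $(2/5-2\eta)t$ clusters spans only light pairs and all other pairs are heavy. It meets the degree bound, yet every heavy triangle uses at most one of those clusters, so no almost perfect tiling by heavy triangles exists.

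\textbf{No absorbing structure is needed at all.} The threshold $8/5$ enters through Lemma~\ref{completion-lemma}. Let $C$ be a maximum $2$-cycle of order $m=n-r$ with $r<n/5$, and let $v\notin V(C)$. Then $d(v,V(C))\ge 8n/5-2r>3m/2$. If $v$ could not be inserted anywhere, summing the $m$ insertion tests would give $2d(v,V(C))\le3m$, a contradiction. So once the reservoir, the almost cover and the connecting lemma produce a $2$-cycle missing fewer than $n/5$ vertices, the proof is finished. The real content is the connecting lemma you left unproved.
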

Determining sharp thresholds is a central objective in extremal graph theory, as such results reveal the precise boundary between the existence and non-existence of the desired structure. In this paper, we establish the minimum total degree threshold for the square of a Hamilton cycle in a digraph. Our result improves upon the bound of DeBiasio et al. and confirms Conjecture \ref{conj2} for the case of $k=2$.
\begin{theorem}\label{szc2}
There exists $n_0\in\mathbb{N}$ such that for every $n\ge n_0$,
every $n$-vertex digraph $D$ with
$\delta(D)\ge 8n/5-c$ contains the square of a Hamilton cycle,
where $c=2$ if $n\equiv2,4\pmod 5$, and $c=1$ otherwise.
\end{theorem}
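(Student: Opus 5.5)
We follow the stability approach: either $D$ is far from every extremal configuration, and we rerun the absorbing/regularity argument of DeBiasio et al.\ with the slack $\eta n$ replaced by a non-extremality assumption; or $D$ is close to an extremal digraph, and we embed the square of a Hamilton cycle directly, using the exact degree bound $8n/5-c$.

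First we identify the extremal configurations. The bound $8n/5$ says that each vertex misses, on average over in- and out-arcs, at most about $2n/5$ potential arcs. The natural extremal digraphs therefore have a set $A$ of size roughly $n/5$ (or $2n/5$) that is sparse, or "one-directional", in a way that prevents covering $A$ by the square cycle: every vertex of a square cycle must lie in transitive triangles with its neighbours, and any five consecutive vertices contain at most two vertices of an independent-like set. We would fix $\varepsilon$-closeness to these configurations (for instance, $D$ contains a set $A$ with $|A|\approx n/5$ and $e(D[A])\le \varepsilon n^2$, plus the analogous orientation obstructions) and call $D$ \emph{non-extremal} otherwise.

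\textbf{Non-extremal case.} We apply the diregularity lemma and obtain a reduced digraph $R$ with $\delta(R)\ge (8/5-\gamma)|R|$. The key lemma would be that if $D$ is non-extremal, then $R$ has an almost perfect tiling by a suitable gadget, namely a blow-up-friendly square-path structure on five clusters, or on the tournament-like tiles used by DeBiasio et al. The tiles must also be connected in the "square-connected" sense, so that square paths can be routed between them. With this in hand, the standard machinery applies: an absorbing square path, a reservoir of connecting square paths, a blow-up lemma (or greedy embedding) inside the tiles, and a final absorption of the leftover vertices. The tiling lemma with only $-\gamma$ slack is where non-extremality is used. We would prove it by a linear-programming / fractional tiling argument, showing that any failure of a near-perfect fractional tiling forces a near-extremal structure in $R$, which then lifts to $D$.

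\textbf{Extremal case.} Here $D$ is $\varepsilon$-close to an extremal digraph, with a partition $V=A\cup B$ or similar. We first move atypical vertices (those with poor degree into the expected parts) to the part where they behave well. We then adjust part sizes modulo $5$ using the exact bound $8n/5-c$: the values $c=2$ for $n\equiv 2,4\pmod 5$ and $c=1$ otherwise come precisely from the divisibility of the construction. Next we build a few short square paths that absorb the bad vertices and fix the divisibility. Finally we cover the rest by an explicit pattern, for example repeating blocks of the form $ABBBB$ (or the appropriate pattern), using that the typical parts are nearly complete in the needed directions. Connecting these pieces uses the high degree of typical vertices.

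The main obstacle is the exact extremal analysis. The total-degree condition allows vertices with very unbalanced in- and out-degrees, so a vertex may have almost no out-neighbours in a part. Handling such vertices, and the parity/divisibility cases that determine $c$, will require careful case analysis. The first thing to try is to classify vertices by $(d^+,d^-)$ into parts and show that any violation of the required pattern gives a degree sum below $8n/5-c$.
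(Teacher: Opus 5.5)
Your proposal has genuine gaps, and the main one is in the extremal case. Your plan does not identify the obstruction correctly. If $D-A$ were well connected, an independent $A$ with $|A|\approx n/5$ would cause no difficulty (blocks like $ABBBB$ embed it), and nothing needs adjusting modulo $5$. The extremal digraphs are $V=A\cup L\cup R$ with $D[A]$ independent, $|A|\approx n/5$, $|L|,|R|\approx 2n/5$, and every arc present except those from $L$ to $R$. In such a digraph no square path goes from $L$ to $R$: at most one vertex of $A$ can separate two consecutive vertices of $L\cup R$ on the path, so an arc from $L$ to $R$ at distance one or two would be needed.

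In a digraph close to this, typical vertices of $L$ send almost no arcs into $R$, and typical vertices of $A$ span almost no arcs. So your step ``connecting these pieces uses the high degree of typical vertices'' cannot produce the $L$-to-$R$ crossing that every Hamilton square cycle needs. Proving that this crossing exists is exactly where the exact bound $8n/5-c$ is used. The paper takes a maximal triple $(\widehat A,\widehat L,\widehat R)$ with $D[\widehat A]$ independent and $e_D(\widehat L,\widehat R)=0$, containing the cleaned parts. A case analysis on the neighbours of any vertex outside the triple yields a square path of order at most $9$ from $L_0$ to $R_0$. If no such vertex exists, degree counting gives $2\tau(n)-3n+4\le|\widehat A|\le\lfloor n-\tau(n)/2\rfloor$, which fails in every residue class modulo $5$; this is the only place $c$ matters. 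You would also need a separate argument when $D-A$ has no such one-way split. The paper handles that case with regularity, a $\overline{\mathcal K}_4$-tiling, and connectors $pqazwbrs$ through two vertices of $A$.

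In the non-extremal case you have put non-extremality in the wrong lemma. Covering all but $\alpha n$ vertices by boundedly many square paths already holds under $\delta(D)\ge(3/2+2\alpha)n$, by the covering lemma of DeBiasio et al.\ or a $\overline{\mathcal K}_4$-tiling of the reduced multigraph. So no fractional tiling argument is needed. What fails near the extremal configuration is joining two arbitrary arcs by a short square path, and your ``square-connected'' requirement is asserted without argument. That connecting lemma is the real content. One shows that the auxiliary digraph on $A(D-Z)$, with $ab\to bc$ whenever $ac\in A(D)$, is strongly connected for every small $Z$; this uses that non-extremality forbids near-independent sets of size about $n/5$. The connector length is then bounded by a potential argument on $|N^+(a)\cap N^+(b)|$.

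You are also missing an observation that removes the need for absorbers, a blow-up lemma, and an exact embedding. Let $C=c_1\cdots c_m$ be a square cycle and $v\notin V(C)$. Summing over all $i$ the condition for inserting $v$ between $c_{i+1}$ and $c_{i+2}$ shows that $v$ can be inserted whenever $d_D(v,V(C))>3m/2$. With $\delta(D)\ge 8n/5-2$ this holds as soon as $m>4n/5$. Hence a maximum square cycle missing fewer than $n/5$ vertices is already Hamiltonian, and both cases only need an almost spanning square cycle.
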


In particularly, the corresponding problem for oriented graphs is much less understood, even
for squares of Hamilton cycles. Treglown~\cite{Treglown} first gave a
construction yielding the asymptotic lower bound $5n/12$. This was
subsequently improved by DeBiasio, who used a slightly unbalanced blow-up of
the Paley tournament on seven vertices to obtain the lower bound $3n/7-1$,
see also~\cite{Debiasio}. More recently, DeBiasio et al.~\cite{Debiasio}
constructed, for every $n\in 11\mathbb N$, an $n$-vertex oriented graph with
minimum semi-degree at least $5n/11-2$ that does not contain the square of a
Hamilton cycle. They also proved that, for every $k\geq2$, every sufficiently
large oriented graph $D$ satisfying
$\delta^0(D)\geq(1/2-1/10^{6000k})n$ contains the $k$th power of a Hamilton
cycle. It remains open to determine the optimal minimum semi-degree threshold,
even asymptotically, for the square of a Hamilton cycle in an oriented graph.

A slightly sharper lower bound  follows directly
from the construction in Proposition~$3.2$ of DeBiasio et
al.~\cite{Debiasio}.

\begin{proposition}\label{prop1}
For every positive integer $n\equiv11\pmod{22}$, there exists an
$n$-vertex oriented graph $G$ such that
$\delta^0(G)=5n/11-1$
and $G$ contains no square of a Hamilton cycle.
\end{proposition}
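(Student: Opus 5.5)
The plan is to reproduce the blow-up construction behind Proposition~3.2 of DeBiasio et al.~\cite{Debiasio}, and to fix the class sizes using the parity available when $n=11m$ with $m$ odd. I am proposing a specific template, not recalling that construction exactly.

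\textbf{Template.} Take the Paley tournament $T$ on $\mathbb{Z}_{11}$, with $i\to j$ if and only if $j-i\in Q=\{1,3,4,5,9\}$. Replace each vertex $i$ by a class $V_i$ and put every arc from $V_i$ to $V_j$ whenever $j-i\in Q$. The classes are independent, or carry only a sparse oriented structure. Since $T$ is $5$-regular, a vertex of $V_i$ has out-degree $\sum_{q\in Q}|V_{i+q}|$ and in-degree $\sum_{q\in Q}|V_{i-q}|$, plus its inside-class degree. The underlying graph is oriented because $T$ is a tournament and $-1\notin Q$.

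\textbf{Key steps, in order.}
\begin{enumerate}
\item \emph{Structural lemma.} A square of a Hamilton cycle $v_1\dots v_n$ has every triple $v_t,v_{t+1},v_{t+2}$ spanning a transitive triangle. With independent classes, the consecutive vertices project to a closed walk $i_1,\dots,i_n$ in $T$ whose steps $a_t=i_{t+1}-i_t$ satisfy $a_t\in Q$ and $a_t+a_{t+1}\in Q$. The admissible consecutive pairs of residues $(a,b)$ form a small finite automaton. Using it, I would show that every closed admissible walk visits the classes with prescribed frequencies modulo a fixed pattern, for example that the visit vector is constrained modulo $2$ or by a linear identity.
\item \emph{Choice of sizes.} Choose $|V_i|\in\{m-1,m,m+1\}$ with total $11m$, arranged so that the visit vector $(|V_0|,\dots,|V_{10}|)$ violates the constraint from step~1. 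This is where $m$ odd, i.e.\ $n\equiv 11\pmod{22}$, should enter: with $m$ odd the balanced vector is forced into the wrong parity class once one class is perturbed.
\item \emph{Degree computation.} Compute $\delta^0$ directly. The out-neighbourhood of $V_i$ consists of five classes, so the perturbation must be placed so that no five-term window $i+Q$ or $i-Q$ loses more than one vertex in total. If a window would lose more, insert a near-regular oriented graph inside some classes to add back the lost degree without creating transitive triangles that break step~1. The aim is exactly $\delta^0(G)=5n/11-1$.
\item \emph{Inside-class arcs.} If inside-class arcs are added, recheck step~1. Vertices of one class may now be consecutive on the cycle, which allows steps $a_t=0$. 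Those steps must be incorporated into the automaton, which they may be in a controlled way since $0+q\in Q$ for $q\in Q$.
\end{enumerate}

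\textbf{Main obstacle.} I expect the hard part to be the invariant in step~1: identifying exactly which class-count vectors are realizable by closed admissible walks in $T$, especially once steps $a_t=0$ are allowed. I would first enumerate the admissible pairs $(a,b)$ with $a,b,a+b\in Q$ and look for a weighting $w:\mathbb{Z}_{11}\to\mathbb{Z}$ that every admissible walk must balance. I would then check that the chosen unbalanced size vector has the wrong $w$-weight.
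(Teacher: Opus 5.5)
Your proposal has a genuine gap at step~1, and it cannot be repaired: for the template you propose, the invariant you hope for does not exist. The problem is not that it is hard to find.

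Because $Q=\{1,3,4,5,9\}=\langle 3\rangle$ is a subgroup of $\mathbb{Z}_{11}^{*}$, the condition $a,b,a+b\in Q$ says exactly $b\in\{3a,4a\}$. Consider the step blocks $(1,4,1,3)$ and $(1,3,1,4)$. Both start with $1$ and end with a step in $\{3,4\}$, after which $1$ is admissible again. Both have sum $9$. From class $c$ they visit $\{c,c+1,c+5,c+6\}$ and $\{c,c+1,c+4,c+5\}$, respectively.

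\begin{itemize}
\item Eleven consecutive blocks $(1,4,1,3)$ form a closed admissible walk of length $44$. It visits every class four times and has one block starting at each class $c$.
\item Replacing any subset of these blocks by $(1,3,1,4)$ keeps the walk closed and admissible. Each replacement moves one visit from class $c+6$ to class $c+4$.
\item Since $2$ generates $\mathbb{Z}_{11}$, $N$ such walks realize $4N(1,\dots,1)+w$ for every zero-sum $w$, where $N$ depends only on $w$.
\item Add the eleven translates of the $5$-step walk $(1,3,9,5,4)$. It visits $c+\{0,1,2,4,7\}$, so the translates give five visits per class.
\item Add $(m-4N-5)/2$ copies of the $22$-step walk $(1,3,1,3,\dots)$, which gives two visits per class.
\end{itemize}

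All these walks pass through states of the form (class $c$, next step $1$), so they can be spliced into one closed walk. For every odd $m$ beyond an absolute constant, this gives a closed admissible walk of length $11m$ realizing any prescribed vector in $\{m-1,m,m+1\}^{11}$ with sum $11m$, including $(m,\dots,m)$. Assigning distinct vertices of $V_i$ to the visits of class $i$ yields the square of a Hamilton cycle.

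So no weighting or parity condition exists, and the parity of $m$ plays no role. Adding arcs inside classes (your step~4) can only create more square cycles. With independent classes, $\delta^0(G)\ge 5m-1$ forces $|V_i|=m+O(1)$ anyway, because the circulant with symbol $\sum_{q\in Q}x^q$ is invertible. Hence the Paley-blow-up template fails for all large $n\equiv 11\pmod{22}$.

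The missing idea is an obstruction on how a square cycle can pass through one part, not a class-count invariant of a blow-up of a vertex-transitive tournament. The paper's construction is not a Paley blow-up.

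\begin{itemize}
\item Put $t=n/11$, which is odd, and use parts $V_1\to V_2\to V_3\to V_1$ with $|V_1|=3t$ and $|V_2|=|V_3|=4t$.
\item $G[V_2]$ and $G[V_3]$ are regular tournaments on $4t+1$ vertices with one vertex deleted.
\item $G[V_1]$ is the blow-up $A_1\to A_2\to A_3\to A_1$ of a directed triangle into independent sets of size $t$, so it contains no transitive triangle.
\item One checks that $\delta^0(G)=5t-1$.
\end{itemize}

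A maximal run of a square Hamilton cycle inside $V_1$ is entered from $V_3$ and left into $V_2$. A run of length $1$ would need an arc from $V_3$ to $V_2$. A run of length at least $3$ would need a transitive triangle in $G[V_1]$. Hence every run has length exactly $2$, forcing $|V_1|=3t$ to be even, which is false since $t$ is odd. This is exactly where $n\equiv 11\pmod{22}$ is used. Your proposal has no mechanism of this kind, so as it stands it does not prove the proposition.
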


\cref{prop1} leads naturally to the following  question.

\begin{question}\label{ques:oriented-hamilton-square}
Does every sufficiently large $n$-vertex oriented graph $D$ with
$\delta^0(D)\geq \left\lceil \frac{5n}{11}\right\rceil$
contain the square of a Hamilton cycle?
\end{question}

\smallskip

\noindent \textbf{Organization.} In Section \ref{Section2}, we first introduce the necessary notation, then prove Proposition \ref{prop1}, and finally outline the proof of Theorem \ref{szc2}. Section \ref{Section3} establishes the almost covering, reservoir, and connecting lemmas, and uses them to construct an almost spanning square cycle in the non-extremal case. In Section \ref{Section4}, we prove the extremal theorem, treating the $\beta$-stable and non-$\beta$-stable cases separately. Section \ref{Section5} combines these ingredients through a unified completion argument to prove Theorem \ref{szc2}. Finally, Section \ref{Section6} records two related open questions.

\section{Preparations for Theorem \ref{szc2}}\label{Section2}
\subsection{Notation}
Throughout the paper, $\mathbb N$ denotes the set of positive integers, and $[t]:=\{1,\ldots,t\}$ for every $t\in\mathbb N$. We follow standard digraph notation; see, for example, \cite{Bang-Jensen3}. All digraphs are finite and loopless, but both arcs $uv$ and $vu$ may be present.

\smallskip

Let $D$ be a digraph. We write $V(D)$ and $A(D)$ for its vertex and arc sets, respectively, and set $|D|:=|V(D)|$ and $e(D):=|A(D)|$. The notation $uv$ denotes the arc directed from $u$ to $v$. For $v\in V(D)$, let $N_D^+(v)$ and $N_D^-(v)$ be its out- and in-neighborhoods, and let $d_D^+(v):=|N_D^+(v)|$ and $d_D^-(v):=|N_D^-(v)|$. We write $d_D(v):=d_D^+(v)+d_D^-(v)$, $\delta(D):=\min_{v\in V(D)}d_D(v)$ and $\delta^0(D):=\min_{v\in V(D)}\min\{d_D^+(v),d_D^-(v)\}$. For $X\subseteq V(D)$ and $\sigma\in\{+,-\}$, we define  $N_D^\sigma(v,X):=N_D^\sigma(v)\cap X$, $d_D^\sigma(v,X):=|N_D^\sigma(v,X)|$, and $d_D(v,X):=d_D^+(v,X)+d_D^-(v,X)$. When $D$ is clear, we also write $N_X^\sigma(v)$, $d_X^\sigma(v)$, and $d_X(v)$ for these quantities.

\smallskip

For $X,Y\subseteq V(D)$, let $E_D(X,Y):=\{xy\in A(D):x\in X,\ y\in Y\}$ and $e_D(X,Y):=|E_D(X,Y)|$. We write $D[X]$ for the subdigraph induced by $X$, and $D-X=D\setminus X:=D[V(D)\setminus X]$. If $H$ is a subdigraph of $D$, then $D-H$ means $D-V(H)$. Subscripts are omitted whenever the underlying digraph is clear.

\smallskip

All paths and cycles are directed, and \emph{disjoint} means vertex-disjoint. A sequence of distinct vertices $v_1,\ldots,v_\ell$ is a \emph{$k$-path} if $v_iv_j\in A(D)$ whenever $1\le j-i\le k$. A cyclic ordering $v_1\ldots v_\ell v_1$ is a \emph{$k$-cycle} if $v_iv_{i+s}\in A(D)$ for every $i\in[\ell]$ and $s\in[k]$, with indices taken modulo $\ell$. We call $2$-paths and $2$-cycles \emph{square paths} and \emph{square cycles}, respectively, and write $C_l^k$ for the $k$th power of a directed cycle on $l$ vertices. For a
$2$-path $P=v_1\cdots v_t$ with $t\ge4$, its initial and terminal arcs are $v_1v_2$ and $v_{t-1}v_t$, and $V_{\rm int}(P):=V(P)\setminus\{v_1,v_2,v_{t-1},v_t\}.$ The order of a path or cycle is its number of vertices.

\smallskip

Throughout the paper, we omit all floor and ceiling signs whenever these are not crucial. We use standard hierarchy notation, that is, we write $0<\alpha\ll\beta\ll\gamma$ to mean that we can choose the constants $\alpha, \beta, \gamma$ from right to left. More precisely, there are increasing functions $f$ and $g$ such that, given $\gamma$, whenever we choose $\beta\leq f(\gamma)$ and $\alpha\leq g(\beta)$, all calculations needed in our proof are valid. For the real numbers $a$ and $b$, we use $a\pm b$ to represent an unspecified real number in the interval $[a-b, a+b]$.

\subsection{Proof of Proposition \ref{prop1}}
\begin{proof}[\textbf{Proof of Proposition \ref{prop1}}]
Set $t:=n/11$, so that $t$ is odd. Partition $V(G)$ into
$V_1,V_2,V_3$ with $|V_1|=3t$ and $|V_2|=|V_3|=4t$, and orient all edges
between these sets cyclically as $V_1\to V_2\to V_3\to V_1$. For each
$i\in\{2,3\}$, obtain $G[V_i]$ by deleting one vertex from a regular
tournament of order $4t+1$. Thus every vertex of $G[V_i]$ has internal
in- and outdegree in $\{2t-1,2t\}$. Finally, partition $V_1$ into three
independent sets $A_1,A_2,A_3$ of size $t$ and orient all edges between
them as $A_1\to A_2\to A_3\to A_1$.

Every vertex of $V_1$ has in- and out-degree $5t$. Moreover, for
$v\in V_2$ and $w\in V_3$,
$$
\begin{aligned}
d^-(v)&\geq 3t+(2t-1)=5t-1,
& d^+(v)&\geq 4t+(2t-1)=6t-1,\\
d^+(w)&\geq 3t+(2t-1)=5t-1,
& d^-(w)&\geq 4t+(2t-1)=6t-1.
\end{aligned}
$$
After a vertex is deleted from a regular tournament of order $4t+1$,
exactly $2t$ of the remaining vertices have internal indegree $2t-1$, and
exactly $2t$ have internal outdegree $2t-1$. Hence the lower bound is
attained, and therefore $\delta^0(G)=5t-1=5n/11-1$.

It remains to show that $G$ contains no square of a Hamilton cycle. Observe
that $G[V_1]$ contains no transitive triangle: three vertices meeting all
of $A_1,A_2,A_3$ form a directed triangle, while any three vertices with
two in the same $A_i$ contain a non-adjacent pair.

Suppose to the contrary that $G$ contains the square of a Hamilton cycle
$C$, and consider a maximal interval of consecutive vertices of $C$ lying
in $V_1$. Its predecessor lies in $V_3$ and its successor lies in $V_2$.
The interval cannot consist of a single vertex, since the square of $C$
would then require an edge from $V_3$ to $V_2$, whereas all such edges are
directed from $V_2$ to $V_3$. Nor can it contain at least three vertices,
since its first three vertices would form a transitive triangle in
$G[V_1]$. Thus every maximal $V_1$-interval contains exactly two vertices.

Consequently, $|V_1|$ is even, contradicting $|V_1|=3t$ and the fact that
$t$ is odd. This completes the proof of Proposition \ref{prop1}.
\end{proof}

\subsection{Overview of the proof of Theorem~\ref{szc2}}\label{subsection2.2}

For each $n\in\mathbb{N}$, define
\[
c:=
\begin{cases}
2, & \text{if } n\equiv 2,4\pmod 5,\\
1, & \text{otherwise},
\end{cases}
\qquad\text{and}\qquad
\tau(n):=\left\lceil\frac{8n}{5}-c\right\rceil.
\]
Since $\delta(D)$ is integer-valued, the degree condition in
Theorem~\ref{szc2} is equivalent to $\delta(D)\ge \tau(n)$. We shall
use this formulation throughout the paper, together with the estimates $\tau(n)\ge \frac{8n}{5}-2$ and $\tau(n)-(n-1)\ge \frac{3n}{5}-1.$

\smallskip

Fix constants satisfying $0<\varepsilon\ll\gamma\ll\beta\ll\xi<\frac15.$ We say that an $n$-vertex digraph $D$ is
\emph{$(1/5,\gamma)$-extremal} if there exists a set
$S\subseteq V(D)$ such that $\left||S|-\frac n5\right|<\gamma n$ and $e(D[S])<\gamma n^2.$ The proof is divided into the extremal and non-extremal cases. In
each case, we construct a $2$-cycle covering all but at most $\xi n$
vertices of $D$. Since $\xi<1/5$, the maximality argument in
Section~\ref{Section5} then implies that a maximum-order $2$-cycle
is spanning.

\medskip

\noindent\textbf{\large The non-extremal case.}
Suppose that $D$ is not $(1/5,\gamma)$-extremal. We combine the
reservoir, connecting, and almost covering lemmas in three steps.

\smallskip

\noindent\textbf{(N1) Constructing a reservoir.}
We choose a small set $\mathcal R\subseteq V(D)$ such that any two
disjoint arcs in $D-\mathcal R$ can be joined by a short $2$-path
whose internal vertices lie in $\mathcal R$. Moreover, the connecting
path can be chosen while avoiding any bounded set of previously used
vertices of $\mathcal R$.

\smallskip

\noindent\textbf{(N2) Finding an almost-spanning path cover.}
In $D-\mathcal R$, we find a bounded family of pairwise disjoint
$2$-paths covering all but at most $\varepsilon n$ vertices.
Discarding only $O(1)$ vertices in total, we may assume that the
initial and terminal arcs of each path are disjoint.

\smallskip

\noindent\textbf{(N3) Joining the paths into a cycle.}
We place the paths in a cyclic order and join consecutive paths using
internally disjoint connecting paths through $\mathcal R$. Joining
the last path back to the first produces a single $2$-cycle, which
leaves at most $\xi n$ vertices of $D$ uncovered.

\medskip

\noindent\textbf{\large The extremal case.}
Suppose that $D$ is $(1/5,\gamma)$-extremal. The proof again proceeds
in three steps.

\smallskip

\noindent\textbf{(E1) Cleaning the extremal partition.}
Let $S\subseteq V(D)$ witness extremality and put $D^*:=D-S.$ After deleting a small set of atypical vertices from $S$, we obtain
a set $A\subseteq S$ such that every vertex of $A$ forms a digon with
all but a small number of vertices of $D^*$. The vertices of $A$ will
be used to connect paths and, subsequently, to extend the resulting
cycle.

\smallskip

\noindent\textbf{(E2) Constructing a core $2$-cycle.}
We distinguish two cases according to whether $D^*$ is $\beta$-stable. Let $\overline{\mathcal K}_4$ denote the family of loopless multigraphs on four vertices in which every pair of distinct vertices has multiplicity one or two, and the pairs of multiplicity one form a possibly empty matching. This particular family arises naturally in the $\beta$-stable case. Indeed, as we shall see in Lemma~\ref{lem:noextremal tiling}, every sufficiently dense standard multigraph admits an almost-perfect tiling by members of $\overline{\mathcal K}_4$.

If $D^*$ is $\beta$-stable, the diregularity lemma and an
almost-perfect $\overline{\mathcal K}_4$-tiling of the associated reduced multigraph yield a bounded family of long $2$-paths covering almost all vertices of $D^*$. These paths are then joined cyclically using short connecting paths.

If $D^*$ is not $\beta$-stable, then $D^*$ has an approximate
two-block structure with two internally dense sets $L_0$ and $R_0$,
and almost all arcs between them are directed from $R_0$ to $L_0$.
We construct short $2$-path connections in both directions and
combine them with almost spanning $2$-paths in $D[L_0]$ and
$D[R_0]$.

In either subcase, we obtain a core $2$-cycle covering almost all
vertices of $D^*$ and using only a small number of vertices from $A$.

\smallskip

\noindent\textbf{(E3) Inserting the remaining vertices of $A$.}
We insert the unused vertices of $A$ at pairwise well-separated
positions on the core $2$-cycle. Since both $S\setminus A$ and the
set of uncovered vertices of $D^*$ are small, the resulting
$2$-cycle leaves at most $\xi n$ vertices of $D$ uncovered.

Thus, in both cases, $D$ contains a $2$-cycle covering all but at
most $\xi n$ vertices. The maximality argument from
Section~\ref{Section5} now completes the proof of
Theorem~\ref{szc2}.

\section{The non-extremal case of Theorem \ref{szc2}}\label{Section3}
As noted in Subsection \ref{subsection2.2}, the non-extremal argument uses an almost covering lemma, a reservoir lemma, and a connecting lemma. We first state the covering and reservoir lemmas. The reservoir lemma will be proved after the connecting lemma, from which it follows.


\subsection{Almost covering and connecting lemmas}

The lemma below serves as our covering lemma. In fact, the almost
covering lemma requires a weaker minimum total degree condition than
that in Theorem~\ref{szc2}.

\begin{lemma} [{\cite{Debiasio}}, Almost covering lemma] \label{cover1}
Given any integer $k\geq2$
and $\alpha>0$, there exist $n_0, T\in\mathbb{N}$ such that for any $n\geq n_0$ the following holds. If $D$ is an $n$-vertex digraph with $\delta(D)\geq 2(1-1/(k+2)+\alpha)n$, then $D$ contains a collection of at most $T$ disjoint $k$-paths that covers all but at most $\alpha n$ vertices.
\end{lemma}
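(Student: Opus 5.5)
We will prove Lemma~\ref{cover1} by the standard regularity-plus-tiling approach. Choose constants $1/n_0\ll 1/T\ll\varepsilon\ll d\ll\alpha,1/k$. We apply the diregularity lemma to $D$ with parameters $\varepsilon,d$ and obtain clusters $V_1,\dots,V_L$ of size $m$, an exceptional set $V_0$ with $|V_0|\le\varepsilon n$, and a pure digraph $D'$ with $d_{D'}(x)>d_D(x)-(2d+\varepsilon)n$. We then form the reduced multigraph $R$ on $[L]$. In $R$, the pair $ij$ has multiplicity one if exactly one of $(V_i,V_j)$, $(V_j,V_i)$ is $\varepsilon$-regular of density at least $d$, and multiplicity two if both are. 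Summing total degrees shows that every cluster has weighted degree
\[
d_R(i)\ge \Bigl(2\bigl(1-\tfrac{1}{k+2}\bigr)+\tfrac{\alpha}{2}\Bigr)L
\]
in the standard multigraph sense, since $\alpha$ absorbs the $d$ and $\varepsilon$ losses. We remember the orientations of the single edges.

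The core step is an almost-perfect tiling of $R$ by a structure that supports long $k$-paths, namely a copy of the $k$th power of a short directed cycle, or a ``transitive'' gadget on $k+1$ clusters. For $k=2$ this is the $\overline{\mathcal K}_4$ family mentioned in the overview. Concretely, I would use the multigraph analogue of the Hajnal--Szemerédi / Komlós tiling theorem. The degree $2(1-\frac1{k+2})L$ plays the role of $(1-\frac1{k+2})$ in the graph setting, so $R$ contains a large ``multi-clique'' structure on $k+2$ clusters. More precisely, we look for a set of $k+2$ clusters in which every pair has multiplicity at least one and the multiplicity-one pairs are sparse enough that the clusters can be ordered so that every pair at cyclic distance at most $k$ has an arc in the right direction.

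First, a greedy/averaging argument shows that the degree bound forces many such structures. Then an LP/fractional-tiling argument, or the Komlós-type absorbing swap argument, upgrades this to a tiling covering all but $\alpha L/4$ clusters. I expect this step to be the main obstacle. The difficulty is that single edges carry orientations that may not be consistent with any cyclic order, so the gadget must be chosen flexibly, with digons used wherever orientations conflict. Verifying that the degree condition suffices for the tiling is where the exact constant $1-\frac1{k+2}$ must be used.

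Given a tile on clusters $W_1,\dots,W_s$ admitting an ordering in which consecutive-within-$k$ pairs are regular in the needed direction, we first pass to super-regular subpairs, discarding $\varepsilon m$ vertices per cluster. We then embed a long $k$-path that winds through $W_1,W_2,\dots,W_s,W_1,\dots$ using the blow-up lemma, or a direct greedy embedding with regularity, since only bounded-size neighbourhoods are needed. This path covers all but $\sqrt{\varepsilon}m$ vertices of each cluster in the tile, and a few short paths may be needed per tile to balance cluster sizes. This gives at most $T:=O(L)$ paths. The uncovered vertices come from $V_0$, the untiled clusters and the per-cluster leftovers, and they total at most $\varepsilon n+\frac{\alpha}{4}n+\sqrt{\varepsilon}n\le\alpha n$, as required.
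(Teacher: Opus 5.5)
The paper does not reprove this lemma; it is imported from DeBiasio et al. Your architecture is the right one, and it is exactly the pipeline the paper runs for $k=2$ in Lemma~\ref{lem:stable-extremal}: diregularity, the reduced standard multigraph $\Gamma$, an almost perfect tiling of $\Gamma$, and a winding $k$-path inside each tile.

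\textbf{The gap.} It sits precisely at the step you flag as the main obstacle. You never pin down the tile family or establish the tiling. ``The multiplicity-one pairs are sparse enough'' is not a condition. A greedy/averaging sketch followed by an LP or absorbing upgrade is not a proof of the threshold $2(1-1/r)$; that threshold is the entire content of the tiling theorem of Czygrinow et al.

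\textbf{First missing input.} This is Lemma~\ref{lem:noextremal tiling} applied with $r=k+2$; the paper states it for every $r\ge2$. Since $\delta(\Gamma)\ge 2(1-\frac1{k+2}+\frac\alpha2)L$, it gives a $\overline{\mathcal K}_{k+2}$-tiling missing at most $\alpha L/4$ clusters, taking $\eta\le\alpha/4$ small.

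\textbf{Second missing observation.} With this tile family, your worry about inconsistent orientations disappears. In $C^k_{k+2}$ only the $k+2$ cyclically consecutive pairs are one-way, and every other pair needs both arcs. Your phrase ``every pair at cyclic distance at most $k$ has an arc in the right direction'' is therefore misleading, because for $k\ge2$ every pair lies within cyclic distance $k$. Suppose the light edges of a tile are $x_1\to y_1,\dots,x_j\to y_j$. Order the clusters as $x_1,y_1,\dots,x_j,y_j$, followed by the remaining clusters in any order. Each light pair becomes a forward consecutive pair and every other pair is heavy, so the reduced digraph contains $C^k_{k+2}$ on the tile. This is Lemma~\ref{lemmq4.10} for general $k$, and no ``flexible'' use of digons is required.

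\textbf{The alternative gadget fails.} Drop the ``transitive gadget on $k+1$ clusters.'' Clusters are independent in $D'$, so using only transitive-tournament arcs, every directed path visits clusters in increasing order and has at most $k+1$ vertices. A winding $k$-path on $k+1$ clusters needs all ordered pairs, i.e.\ an all-heavy $K_{k+1}$. Such tilings are not forced at this degree. For example, take heavy edges forming a complete $(k+1)$-partite graph with one part of size about $2n/(k+2)$, and light edges inside the parts.

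\textbf{The rest is fine.} With the tiling and ordering fixes, your embedding step goes through as described. The greedy option (the $k$-analogue of Lemma~\ref{lemma4.9}) suffices and avoids needing a blow-up lemma for digraphs with two-way pairs.
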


For an ordered pair of disjoint arcs $uv,yz\in A(D)$, we call $(uv,yz)$ \emph{direct} if $uvyz$ itself is a $2$-path.

\begin{definition}\label{def:direct-pair}
Let $D$ be a digraph. A set $\mathcal R\subseteq V(D)$ is a \emph{$(T,C)$-reservoir} if, for every ordered pair of disjoint arcs $uv,yz\in A(D-\mathcal R)$ and every set $U\subseteq\mathcal R$ with $|U|\leq TC$, there exists a
$2$-path from $uv$ to $yz$ of order at most $C$ whose internal vertices lie in $\mathcal R\setminus U$.
\end{definition}

\begin{lemma}[Reservoir lemma]\label{reservoir-lemma}
Let $0<1/n\ll1/T\ll\lambda\ll\mu\ll\gamma<1/30$, and let $D$ be an $n$-vertex digraph satisfying $\delta(D)\ge(8/5-\gamma/2)n$. Suppose that $D$ is not $(1/5,\gamma)$-extremal. Then $D$ contains a $(T,C_\gamma)$-reservoir $\mathcal R$ with $|\mathcal R|\leq2\lambda n$, where $C_\gamma$ is the constant in Lemma~\ref{connecting-lemma}.
\end{lemma}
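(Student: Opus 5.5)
The plan is to deduce the reservoir lemma from the connecting lemma (Lemma~\ref{connecting-lemma}, stated later). We need that lemma in a robust ``many paths'' form: in a non-$(1/5,\gamma)$-extremal digraph $D$ with $\delta(D)\ge(8/5-\gamma/2)n$, for every ordered pair of disjoint arcs $uv,yz$ there is some length $\ell\le C_\gamma$ such that $D$ contains at least $\mu n^{\ell}$ distinct $2$-paths from $uv$ to $yz$ of order $\ell+4$ (so with $\ell$ internal vertices). If the connecting lemma only produces one path, I would first upgrade it by supersaturation. Delete any set of at most $\mu' n$ vertices; the resulting digraph still satisfies $\delta\ge(8/5-\gamma)n$ and is still not $(1/5,\gamma/2)$-extremal, so the lemma still gives a connection. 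Averaging over random deletions then yields $\Omega(n^{\ell})$ connecting paths for some fixed $\ell\le C_\gamma$.

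\textbf{Step 1: random reservoir.} Choose $\mathcal R$ by including each vertex independently with probability $p=\lambda$. By Chernoff, $|\mathcal R|\le 2\lambda n$ with high probability. Fix a pair of arcs $(uv,yz)$ and its good length $\ell$. The number of connecting paths whose $\ell$ internal vertices all lie in $\mathcal R$ has expectation at least $\mu p^{\ell}n^{\ell}=\mu\lambda^{\ell}n^{\ell}$. This count is a polynomial of bounded degree in the indicator variables, and changing one vertex affects at most $\ell n^{\ell-1}$ of the paths. So a Janson or Kim--Vu type concentration bound (or McDiarmid's inequality applied to the count) gives failure probability $\exp(-\Omega(n))$. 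A union bound over the at most $n^4$ pairs of arcs then shows that, with positive probability, every pair has at least $\tfrac12\mu\lambda^{\ell}n^{\ell}$ connecting paths with interior in $\mathcal R$.

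\textbf{Step 2: avoiding $U$.} Let $U\subseteq\mathcal R$ with $|U|\le TC_\gamma$. The number of connecting paths that meet $U$ is at most $\ell\,|U|\,n^{\ell-1}\le C_\gamma^2T n^{\ell-1}$. Since $1/n\ll1/T,\lambda$, this is smaller than $\tfrac12\mu\lambda^{\ell}n^{\ell}$, so some connecting path avoids $U$. Its order is $\ell+4$, and it is bounded by $C_\gamma$ after adjusting the constant in the statement. One subtlety is that arcs of $D-\mathcal R$ must be handled, and the endpoints $u,v,y,z$ themselves lie outside $\mathcal R$. This is fine, because we counted paths for all arc pairs in $D$.

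\textbf{Main obstacle.} The main difficulty is obtaining the counting (robust) form of the connecting lemma with a \emph{uniform} bounded length. The connecting lemma presumably builds paths through $\overline{\mathcal K}_4$-like structures, transitive tournaments on four or five vertices, and digon-rich neighbourhoods, using non-extremality to escape the $1/5$-independent-set obstruction. I would check that each step of that construction chooses a vertex from a linear-size set. Then the number of choices multiplies to $\Omega(n^{\ell})$ directly, and the supersaturation detour becomes unnecessary. Finally, note that $\lambda\ll\mu$ ensures $|\mathcal R|$ is small relative to the connecting density, while $\mu\ll\gamma$ leaves room for the deletions used above.
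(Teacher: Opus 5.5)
Your Steps 1 and 2 are fine \emph{given} the counting form of the connecting lemma you posit (McDiarmid's inequality with Lipschitz constant $\ell n^{\ell-1}$ does give failure probability $e^{-\Omega(n)}$). However, that counting form is never established, and the route you propose for it fails.

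Robustness under deleting small sets does not imply $\Omega(n^{\ell})$ connecting paths. Suppose the interiors of all connecting $2$-paths from $uv$ to $yz$ were exactly the pairs of a perfect matching on $V(D)\setminus\{u,v,y,z\}$. Then every set of $\mu' n$ vertices would miss some path, yet there would be only $O(n)$ paths. Correspondingly, averaging over random deletions of $\mu' n$ vertices only shows that a fixed path survives with probability about $(1-\mu')^{\ell}$, which gives no lower bound of order $n^{\ell}$. To get one, you would need to apply the lemma to random subdigraphs of \emph{constant} order and prove that they inherit the degree condition and non-extremality; you have not addressed this sampling statement.

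The deletion step itself is also not a valid application. Lemma~\ref{connecting-lemma} ties the degree slack $\gamma/2$ to the extremality parameter $\gamma$. The bounds you state for $D-F$, namely $\delta\ge(8/5-\gamma)n$ and not $(1/5,\gamma/2)$-extremal, therefore do not meet its hypotheses for any single choice of parameter.

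Your fallback is not straightforward either. Checking that each step of the connecting construction chooses from a linear-size set runs into the structure of the paper's connecting lemma. It passes through strong connectivity of the auxiliary digraph $\mathcal A_2(D-Z)$, whose directed paths have unbounded length, and then uses a single specific pair of consecutive arcs on such a path.

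The missing idea is that you need only linearly many connecting paths with \emph{pairwise disjoint interiors}, not $\Omega(n^{\ell})$ paths. Lemma~\ref{connecting-lemma} is already stated for an arbitrary forbidden set $F$ with $|F|\le\mu n$.

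\begin{itemize}
\item For a non-direct pair $(uv,yz)$, apply the lemma repeatedly with $F$ the union of the interiors found so far. Choosing $\rho$ with $C_\gamma\rho<\mu$, this yields $\lfloor\rho n\rfloor$ connecting paths of order at most $C_\gamma$ with disjoint interiors.
\item The events that these interiors lie in $\mathcal R$ are then independent, each of probability at least $\lambda^{C_\gamma-4}$. A plain Chernoff bound gives at least $\kappa n$ surviving paths except with probability $e^{-\Omega(n)}$, and a union bound over fewer than $n^4$ pairs finishes the random part.
\item Disjointness also makes avoiding $U$ immediate, since $U$ meets at most $|U|\le TC_\gamma<\kappa n$ of these paths. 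A direct pair is joined by $uvyz$ itself.
\end{itemize}

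This is exactly the paper's proof. It also keeps the order bound at $C_\gamma$ itself, whereas your ``adjusting the constant'' is not permitted by the statement.
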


The proof of Lemma~\ref{reservoir-lemma} is given after the following connecting lemma, which is the main input in the proof of
Lemma~\ref{reservoir-lemma}.
\begin{lemma}[Connecting Lemma]\label{connecting-lemma}
Let $0<1/n\ll\mu\ll\gamma<1/30$, and let $D$ be an $n$-vertex digraph satisfying
$\delta(D)\ge (8/5-\gamma/2)n$. Suppose that $D$ is not $(1/5,\gamma)$-extremal. Then, for every set $F\subseteq V(D)$ with $|F|\le \mu n$ and every pair of disjoint arcs $uv,yz\in A(D-F)$, there exists a $2$-path $x_1x_2\cdots x_t$ in $D-F$ such that $x_1=u$, $x_2=v$, $x_{t-1}=y$, and $x_t=z$, where
$$
t\le C_\gamma=5\left\lceil\frac{8}{25\gamma}+\frac25\right\rceil+3.
$$
\end{lemma}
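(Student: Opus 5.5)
The plan is to connect $uv$ to $yz$ by growing ``reachable arc sets'' and showing they must meet. For an arc $ab$ in $D':=D-F$, note first that $\delta(D')\ge(8/5-\gamma/2-2\mu)n$. So any two vertices $a,b$ have at least $d(a)+d(b)-3n\ge(1/5-\gamma)n$ common ``double neighbours'' counted with multiplicity. More usefully, for an arc $ab$ the set $N^+(a)\cap N^+(b)$ of vertices extending the square path forward has size at least $d^+(a)+d^+(b)-n$.

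Total degree only controls $d^+ + d^-$, so the forward extension alone can be small. I would therefore work with both forward extensions from $uv$ and backward extensions from $yz$. Define $\mathcal A_i$ as the set of arcs $ab$ such that some square path of order at most $i+2$ starts with $uv$ and ends with $ab$. Define $\mathcal B_i$ similarly for square paths that end with $yz$ and start with $ab$. The target is then to find an $i,j$ with an arc $ab\in\mathcal A_i$ and an arc $cd\in\mathcal B_j$ such that $abcd$ is a square path (or such that the two arcs coincide). This yields the path of order at most $i+j+4$. Vertex disjointness is handled by using the bounded path length and avoiding $O(1)$ vertices, which costs only $O(1)$ in degrees.

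\textbf{Key step 1 (expansion).} Show that $|\mathcal A_{i+5}|\ge|\mathcal A_i|+\Omega(\gamma n^2)$ until $\mathcal A_i$ is ``closed''. Here closed means that the set $X$ of vertices appearing as second entries of arcs in $\mathcal A$ satisfies a density dichotomy. The constant $C_\gamma=5\lceil 8/(25\gamma)+2/5\rceil+3$ suggests exactly this: we iterate about $8/(25\gamma)$ rounds of $5$ steps each, gaining roughly $\gamma n^2\cdot$const of arcs per round out of at most $n^2$. To do this, I would count, for a fixed arc $ab$, the number of vertices $c$ with $ac,bc\in A$. Averaging the degree condition over the end-vertices of reachable arcs shows that either many new arcs $bc$ become reachable, or the set of reachable endpoints has very few out-arcs to its complement. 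The total degree $8n/5$ shows that a vertex misses at most $2n/5$ of its $2n$ potential arcs. Hence a triple $a,b,c$ fails to be a square extension only through a structured obstruction.

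\textbf{Key step 2 (closure implies extremality).} Suppose the forward-reachable arcs stop growing and do not meet the backward-reachable ones. Then we get a set $S$, formed from the endpoints that are never reached or from the common-neighbourhood bottleneck, with $|S|\approx n/5$ and $e(D[S])<\gamma n^2$. This contradicts non-extremality. Concretely, the extremal example is a set $S$ of size $n/5$ that is (nearly) independent and completely joined by digons to the rest. In such an example square paths cannot pass through $S$ twice within distance $2$. Stuck expansion forces the missing arcs (at most $2n/5$ per vertex) to concentrate inside a set of size about $n/5$, so the missing-arc budget is exhausted there.

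I expect step 2 to be the main obstacle. The difficulty is converting ``no expansion'' into an exact near-independent set of size $n/5\pm\gamma n$. This needs careful double counting of non-arcs: each vertex has at most $(2/5+\gamma/2)n$ non-adjacencies (counting directions). A closed reachable family would force about $n/5\cdot n/5\cdot 2$ non-arcs inside one set. I would try first to take $S$ to be the set of vertices $c$ outside the common out-neighbourhoods of a typical reachable arc, and then show it is sparse.

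Finally, once $\mathcal A$ and $\mathcal B$ are each large (of size $\ge(1/2+\Omega(\gamma))n^2$-ish in appropriate weighted sense), a pigeonhole argument produces the meeting square path $abcd$. The total order is then bounded by $C_\gamma$.
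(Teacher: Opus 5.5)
There is a genuine gap. The proposal names what has to be proved but supplies a mechanism for none of it.

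\textbf{Steps 1 and 2 are unproved.} Together they assert that a forward-reachable arc family which fails to gain $\Omega(\gamma n^2)$ arcs within five steps forces a near-independent set of size about $n/5$. That is a \emph{robust} non-expansion statement, which is harder than exact closure. The only support you give is the heuristic that missing arcs ``concentrate'' in such a set, and you flag this yourself as the main obstacle. Even the exact version is a real theorem that a missing-arc count does not deliver. Here the exact version is: the out-closure of $uv$ in the arc digraph $\mathcal A_2(D-Z)$, where $ab\to bc$ iff $ac\in A(D)$, is all of $A(D-Z)$.

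\textbf{The meeting step is unsupported.} Large $\mathcal A$ and $\mathcal B$ give at best a common arc. The forward and backward witnessing paths may then intersect, because $\mathcal B$ was not computed in $D$ minus the forward path. This is why one needs connectivity of $\mathcal A_2(D-Z)$ for \emph{every} small $Z$.

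\textbf{The constant is not derived.} The statement fixes $t\le C_\gamma=5\lceil 8/(25\gamma)+2/5\rceil+3$. Counting arcs gained per round does not produce this constant, and matching it by numerology is not a proof.

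The missing idea is that boundedness should not come from expansion at all.

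First, the paper proves qualitatively that $\mathcal A_2(D-Z)$ is strongly connected for every $|Z|\le 3\mu n$. It works in the digon graph $G$:
\begin{itemize}
\item Any two vertices have at least $(1/5-\gamma)n$ common $G$-neighbours, and by non-extremality these span an arc $pq$.
\item The walk $xypqyx$ then shows that both arcs of a digon lie in one strong component. Hence so do all arcs of a $\overleftrightarrow{K_3}$.
\item The walk $abpqab$ shows every component contains a digon arc.
\item A labelling and mixed-vertex argument, ending with the count $3h-|B|>0$, rules out a source component.
\end{itemize}

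Second, length is controlled by the per-arc potential $p_e=|N^+(a)\cap N^+(b)|$ for $e=ab$, with $q_e=|N^-(a)\cap N^-(b)|$.
\begin{itemize}
\item Every arc $f$ satisfies $p_f+q_f\ge(6/5-\gamma)n$.
\item So whenever $p_f\le p_e+5\gamma n/2$ with $f=cd$, the set $N^+(a)\cap N^+(b)\cap N^-(c)\cap N^-(d)$ has size at least $(1/5-7\gamma/2)n$.
\item This exceeds $\alpha(D)\le(1/5-7\gamma/2-\gamma^2)n$, which again follows from non-extremality. So the set contains an arc $pq$, and $abpqcd$ is a six-vertex connector.
\item Follow any $\mathcal A_2$-walk from the current terminal arc to $yz$. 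Jump to the last arc whose potential is within $5\gamma n/2$ of the current one, then take one more step.
\item Each round adds at most five vertices (six in the terminal round). It raises the potential by more than $5\gamma n/2$, and the potential stays inside $[(1/5-\gamma)n,n]$.
\end{itemize}
This is exactly where the $\lceil 8/(25\gamma)+2/5\rceil$ rounds and the constant $C_\gamma$ come from.
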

\begin{proof}
Choose $\mu$ sufficiently small in terms of $\gamma$ so that
$12\mu<\gamma, 6\mu<\gamma^2$, and $9\mu<1/5-7\gamma/2$.
Let $G$ be the bidirected graph associated with $D$, that is, $V(G)=V(D)$ and $xy\in E(G)$ if and only if both $xy$ and $yx$ belong to $A(D)$. Since each semidegree is at most $n-1$, the degree condition gives, for every $x\in V(D)$,
\begin{equation}\label{eq:conn-degree}
d_D^+(x),\ d_D^-(x),\ d_G(x)
\ge \left(\frac35-\frac\gamma2\right)n+1.
\end{equation}
Consequently, every two vertices $x,y\in V(D)$ satisfy
\begin{equation}\label{eq:conn-common}
|N_G(x)\cap N_G(y)|
\ge \left(\frac15-\gamma\right)n+2.
\end{equation}

We first establish three auxiliary claims.

\begin{claim}\label{cl:conn-fresh}
Let $X,Z\subseteq V(D)$ satisfy $|Z|\le3\mu n$ and
$|X|\ge \left(\frac15-\gamma\right)n-3\mu n$.
Then $D[X\setminus Z]$ contains an arc.
\end{claim}

\begin{proof}\renewcommand*{\qedsymbol}{$\blacksquare$}
Suppose that $D[X\setminus Z]$ contains no arc, and put
$L:=\lceil(1/5-\gamma)n\rceil+1$. If $|X\setminus Z|\ge L$, then any $L$-subset $S$ of $X\setminus Z$ satisfies $e(D[S])=0$ and witnesses that $D$ is $(1/5, \gamma)$-extremal. Otherwise, extend $X\setminus Z$ to an $L$-set $S$. Since
$|X\setminus Z|\ge(1/5-\gamma)n-6\mu n$, at most $6\mu n+2$ vertices are added. Every arc of $D[S]$ is incident with one of the added vertices, and hence, for sufficiently large $n$, $e(D[S])\le2(6\mu n+2)n<\gamma n^2$. Moreover, $\bigl||S|-n/5\bigr|<\gamma n$, contradicting non-extremality.
\end{proof}

\begin{claim}\label{cl:conn-alpha}
Let $\alpha(D)$ denote the largest order of a vertex set spanning no arc. Then
\begin{equation}\label{eq:conn-alpha}
\alpha(D)\le
\left(\frac15-\frac{7\gamma}{2}-\gamma^2\right)n.
\end{equation}
Consequently, if $X,Z\subseteq V(D)$ satisfy $|Z|\le3\mu n$ and
$|X|\ge(1/5-7\gamma/2)n$, then $D[X\setminus Z]$ contains an arc.
\end{claim}

\begin{proof}\renewcommand*{\qedsymbol}{$\blacksquare$}
Let $I$ be an independent set and put $L:=\lceil(1/5-\gamma)n\rceil+1$. If $|I|\ge L$, then an $L$-subset of $I$ witnesses that $D$ is $(1/5, \gamma)$-extremal. Otherwise, extend $I$ to an $L$-set $S$. Every arc of $D[S]$ has an end-vertex in $S\setminus I$, so non-extremality gives $\gamma n^2\le e(D[S])\le2L(L-|I|)$.
It follows that
$$
|I|\le L-\frac{\gamma n^2}{2L}
=\left(\frac15-\gamma-\frac{\gamma}{2(1/5-\gamma)}+o(1)\right)n.
$$
Since
$
\frac{\gamma}{2(1/5-\gamma)}-\frac{5\gamma}{2}
=\frac{5\gamma^2}{2(1/5-\gamma)}>\gamma^2,
$
\eqref{eq:conn-alpha} follows for sufficiently large $n$. The final assertion follows from $6\mu<\gamma^2$.
\end{proof}

For a digraph $H$, define an \emph{auxiliary digraph} $\mathcal{A}_2(H)$ with vertex set $A(H)$, in which $ab\to bc$ in $\mathcal A_2(H)$ if and only if $ac\in A(H)$. Thus, a sequence $v_1\ldots v_\ell$ is a $2$-walk in $H$ if and only if $v_1v_2\to v_2v_3\to\cdots\to v_{\ell-1}v_\ell$ is a directed walk in $\mathcal{A}_2(H)$.

\begin{claim}\label{cl:conn-state}
For every $Z\subseteq V(D)$ with $|Z|\le3\mu n$, the auxiliary digraph $\mathcal{A}_2(D-Z)$ is strongly connected.
\end{claim}

\begin{proof}\renewcommand*{\qedsymbol}{$\blacksquare$}
Write $z:=|Z|$. By \eqref{eq:conn-degree} and \eqref{eq:conn-common}, we have $\delta(G-Z)\geq(3/5-\gamma/2)n+1-z$, every two vertices of $G-Z$ have at least $(1/5-\gamma)n+2-z$ common neighbors, and every vertex has at most $(2/5+\gamma/2)n-2$ non-neighbors in $G-Z$.

\smallskip

We first show that every strong component of $\mathcal{A}_2(D-Z)$ contains an arc belonging to a digon of $D-Z$. Let $xy$ be an arc belonging to a digon. By Claim~\ref{cl:conn-fresh}, $D[N_{G-Z}(x)\cap N_{G-Z}(y)]$ contains an arc $pq$. The $2$-walk $xypqyx$ shows that $xy$ reaches $yx$ in $\mathcal{A}_2(D-Z)$. Interchanging $x$ and $y$, we see that $yx$ also reaches $xy$. Thus the two arcs of every digon lie in the same strong component of $\mathcal{A}_2(D-Z)$. If $x,y,w$ span a copy of $\overleftrightarrow{K_3}$ in $D-Z$,  then $xy\to yw\to wx\to xy$ and $yx\to xw\to wy\to yx$ are directed cycles in $\mathcal{A}_2(D-Z)$. Since $xy$ and $yx$ lie in the same strong component, all six arcs of this copy of $\overleftrightarrow{K_3}$ lie in that component.

\smallskip

Now let $ab\in A(D-Z)$. Choose an arc $pq$ in $D[N_{G-Z}(a)\cap [N_{G-Z}(b)]$. The $2$-walk $abpqab$ shows that $ab$ lies on a closed directed walk in $\mathcal{A}_2(D-Z)$ containing the arcs $bp$ and $qa$, each of which belongs to a digon of $D-Z$. Hence the strong component containing $ab$ contains an arc belonging to a digon. Therefore every strong component of $\mathcal{A}_2(D-Z)$ contains such an arc.

\smallskip

Suppose that $\mathcal{A}_2(D-Z)$ is not strongly connected. In its condensation choose a source component. Let $L$ be its vertex set and let $R:=V(\mathcal{A}_2(D-Z))\setminus L$. Then $R\cup L$ is nontrivial, each side is a union of strong components, and there is no arc of $\mathcal{A}_2(D-Z)$ directed from $R$ to $L$. Since the two arcs of every digon lie in the same strong component, label each edge $xy\in E(G-Z)$ by $R$ or $L$, according to which of these two sets contains the arcs $xy$ and $yx$.

\smallskip

Assume that a vertex $v$ is incident with edges of $G-Z$ carrying both labels, and call it \emph{mixed}. Define $A:=\{x\in N_{G-Z}(v):vx\in R\}$ and $C:=\{x\in N_{G-Z}(v):vx\in L\}$, and put $B:=V(D-Z)\setminus(A\cup C\cup\{v\})$. Every neighbor of $v$ in $G-Z$ lies in exactly one of $A$ and $C$, so $N_{G-Z}(v)=A\cup C$ and $|B|=(n-z-1)-d_{G-Z}(v)\leq(2/5+\gamma/2)n-2$.

\smallskip

If $a\in A$ and $c\in C$, then $ac\notin E(G-Z)$, since otherwise, $v,a,c$ would span a copy of $\overleftrightarrow{K_3}$, placing $va$ and $vc$ in the same strong component of $\mathcal{A}_2(D-Z)$. More strongly, there is no arc from $A$ to $C$. Otherwise, if $ac\in A(D)$, since $va,av,vc,cv\in A(D-Z)$, we obtain the directed walk
$va\to ac\to cv$ in $\mathcal{A}_2(D-Z)$ from $R$ to $L$, a contradiction. Here $cv\in L$ because $cv$ and $vc$ are the two arcs of the same digon.

\smallskip

If $a\in A$ and $x\in N_{G-Z}(v)\cap N_{G-Z}(a)$, then $v,a,x$ span a copy of $\overleftrightarrow{K_3}$ in $D-Z$, so $vx$ lies in the same strong component as $va$ and $x\in A$. The analogous statement holds for $C$. Hence
\begin{equation}\label{eq:conn-cut-sizes}
|A|,|C|\geq(1/5-\gamma)n+2-z,
\qquad |B|\leq(2/5+\gamma/2)n-2.
\end{equation}

In the following, we set $h:=(1/5-\gamma)n+2-z$. Fix $a_0\in A$ and $c_0\in C$. Their common neighborhood in $G-Z$ has size at least $h$. After deleting $v$, the remaining set still has size at least $(1/5-\gamma)n+1-z\geq(1/5-\gamma)n-3\mu n$, so Claim~\ref{cl:conn-fresh} gives an arc $pq$ whose end-vertices lie in this common neighborhood with $p,q\ne v$. If $pq\in R$, then $pq\to qc_0$; since no arc of $\mathcal{A}_2(D-Z)$ goes from $R$ to $L$, we have $qc_0\in R$; hence $c_0$ is mixed. If $pq\in L$, then $a_0p\to pq$ implies $a_0p\in L$; hence $a_0$ is mixed. Thus for every $(a_0,c_0)\in A\times C$, at least one endpoint is mixed. Consequently, either every vertex of $A$ is mixed, or every vertex of $C$ is mixed.

\smallskip

Assume first that every vertex of $A$ is mixed. For $x\in A$, define $E_x:=\{w\in N_{G-Z}(x): xw\in L\}$. Since $x$ is mixed, $E_x\neq\emptyset$. Every edge $aa^\prime\in E(G[A])$ is labelled $R$, since $v, a, a^\prime$ span a copy of $\overleftrightarrow{K_3}$, and $E_G(A, C)=\emptyset$; hence $E_x\subseteq B$. Choose $w_x\in E_x$. Every common neighbor of $x$ and $w_x$ in $G-Z$ also lies in $E_x$, because these three vertices span a copy of $\overleftrightarrow{K_3}$, and all of whose arcs lie in the same strong component of $\mathcal{A}_2(D-Z)$ as the arc $xw_x\in L$. Therefore $|E_x|\geq h$. Also, every common neighbor of $v$ and $x$ in $G-Z$ lies in $A$, so $d_{G-Z}(x,A)\geq h$.

By \eqref{eq:conn-cut-sizes}, we have that
\begin{equation}\label{eq:conn-positive}
3h-|B|\geq(1/5-7\gamma/2)n+8-3z>0.
\end{equation}
Fix $x\in A$ and put $P:=N_{G-Z}(x,A)$. Claim~\ref{cl:conn-fresh} gives an arc $p_1p_2$ in $D[P]$. Since $vp_1\in R$ and $vp_1\to p_1p_2$, we have $p_1p_2\in R$. For $i=1,2$, we have $E_{p_i}\cap E_x=\emptyset$: otherwise a vertex in the intersection would yield a copy of $\overleftrightarrow{K_3}$ containing the arcs $xp_i\in R$ and $xw_x\in L$, contradicting that $R$ and $L$ are disjoint unions of strong components.  Hence $|E_{p_1}\cap E_{p_2}|\geq |E_{p_1}|+|E_{p_2}|-|B\setminus E_x|\geq3h-|B|>0$. For $w\in E_{p_1}\cap E_{p_2}$, the arc $p_1p_2\to p_2w$ of $\mathcal{A}_2(D-Z)$ goes from $R$ to $L$, a contradiction.

If every vertex of $C$ is mixed, define $F_x:=\{w\in N_{G-Z}(x): xw\in R\}$. The same argument gives $F_x\subseteq B$, $|F_x|\geq h$, and $d_{G-Z}(x,C)\geq h$. Choose an arc $p_1p_2$ in $D[N_{G-Z}(x,C)]$. Since $p_1p_2\to p_2v$ and $p_2v\in L$, we have $p_1p_2\in L$. Inequality \eqref{eq:conn-positive} gives $|F_{p_1}\cap F_{p_2}|>0$, and any $w$ in this intersection yields an arc $wp_1\to p_1p_2$ of $\mathcal{A}_2(D-Z)$ directed from $R$ to $L$. Thus no mixed vertex exists.

Finally, $\delta(G-Z)>(n-z)/2,$ so $G-Z$ is connected. Consider the above $L/R$-labelling of $E(G-Z)$. Since no vertex is mixed, all edges of $G-Z$ incident with a fixed vertex receive the same label. As $G-Z$ is connected, every edge of $G-Z$ receives the same label. But every strong component of $\mathcal{A}_2(D-Z)$ contains an arc belonging to a digon of $D-Z$, whereas both $R$ and $L$ are nonempty unions of strong components. This contradiction proves that $\mathcal{A}_2(D-Z)$ is strongly connected.
\end{proof}

For an arc $e=ab$, define $P_e:=N^+(a)\cap N^+(b),  Q_e:=N^-(a)\cap N^-(b),$ and
$p_e:=|P_e|$ and  $q_e:=|Q_e|$.
By \eqref{eq:conn-degree}, we have that $p_e,q_e\ge(1/5-\gamma)n+2$. Moreover, for every arc $e=ab$,
\begin{align}\label{eq:conn-pq-sum}
p_e+q_e
\ge d^+(a)+d^+(b)-n+d^-(a)+d^-(b)-n=d_D(a)+d_D(b)-2n
\ge\left(\frac65-\gamma\right)n.
\end{align}
Suppose that $e=ab$ and $f=cd$ are vertex-disjoint arcs satisfying $p_f\le p_e+5\gamma n/2$. It follows from \eqref{eq:conn-pq-sum} that
\begin{equation*}
|P_e\cap Q_f|
\ge p_e+q_f-n
\ge\left(\frac15-\frac{7\gamma}{2}\right)n.
\end{equation*}
By Claim~\ref{cl:conn-alpha}, if $W\subseteq V(D)$ satisfies
$|W\cup\{a,b,c,d\}|\le3\mu n$, then
$(P_e\cap Q_f)\setminus(W\cup\{a,b,c,d\})$ contains an arc $pq$. Consequently, $abpqcd$ is a six-vertex $2$-path connector from $e$ to $f$ whose internal vertices avoid $W$ (for simplicity, say $abpqcd$ is a $W$-avoiding six-vertex connector from $e$ to $f$).

\smallskip

We construct the required $2$-path iteratively. Start with the $2$-path $P=uv$, and keep the arc $yz$ unused as the prescribed terminal arc. Suppose that the path constructed so far is a $2$-path in $D-F$, ends with the arc $e=ab$, and contains neither $y$ nor $z$. Write its vertex set as $U\cup\{a,b\}$. At the beginning of each iteration, every preceding nonterminal iteration has increased the value of  $p_e$ by more than $5\gamma n/2$.
Since
$(1/5-\gamma)n+2\le p_e\le n-2$, the number of preceding nonterminal iterations is bounded in terms of $\gamma$. Consequently, the path
constructed so far has $O_\gamma(1)$ vertices. Since $1/n\ll\mu$,
every forbidden set used below has size at most $3\mu n$ for
sufficiently large $n$.

\smallskip

Since $p_e,q_{yz}\ge(1/5-\gamma)n+2$, Claim~\ref{cl:conn-fresh} gives an arc $cd\in A(D[P_e])$ avoiding $F\cup U\cup\{a,b,y,z\}$. Applying the claim again, while also excluding $c,d$, gives an arc $rs\in A(D[Q_{yz}])$. Thus $cd$ and $rs$ are disjoint and avoid $F\cup U\cup\{a,b,y,z\}$. Put $Z:=F\cup U\cup\{a,b,y,z\}$. By the preceding observation, $|Z|\le3\mu n$. By Claim~\ref{cl:conn-state}, there is a directed path from $cd$ to $rs$ in $\mathcal{A}_2(D-Z)$. Together with $ab\to bc\to cd$ and $rs\to sy\to yz$, this gives a directed walk $e_0e_1\ldots e_\ell$ in $\mathcal{A}_2(D)$ from $e_0=e$ to $e_\ell=yz$, where $e_1=bc$, $e_2=cd$, and every arc from $e_2$ through $rs$ belongs to $A(D-Z)$.

\smallskip

Let $j$ be the largest index such that $p_{e_j}\le p_{e_0}+5\gamma n/2$. If $j=\ell$, use a $(F\cup U)$-avoiding six-vertex connector from $e_0$ to $yz$, and finish.

\smallskip

Suppose that $j<\ell$. If $j=0$, extend the current $2$-path by $c$, the head of $e_1$. If $j=1$, extend the current $2$-path by $c,d$. Now suppose that $j\ge2$, and write $e_j=xw$ and $e_{j+1}=wh$. The arcs $e_0$ and $e_j$ are vertex-disjoint, and $e_j$ avoids the path constructed so far. Use a $(F\cup U\cup\{y,z,h\})$-avoiding six-vertex connector from $e_0$ to $e_j$, and then extend the resulting $2$-path by $h$, using the arc $e_j\to e_{j+1}$ of $\mathcal{A}_2(D)$. Since $wh,xh\in A(D)$ and $D$ is loopless, $h\notin\{x,w\}$. Moreover, $h\notin V(P)$ by the choice of the directed walk. Thus the resulting sequence remains a $2$-path in $D-F$. In each case at most five vertices are added, and, by the maximality of $j$, the new terminal arc satisfies $p_{e_{j+1}}>p_{e_0}+5\gamma n/2$.

\smallskip

If $e_{j+1}=yz$, the construction is complete. If $e_j=rs$, then $e_{j+1}=sy$; in this case extend the path by $z$, using the arc $sy\to yz$ of $\mathcal{A}_2(D)$, and terminate the construction. This terminal iteration adds at most six vertices. In every other case, $e_{j+1}$ avoids $y,z$, and we proceed to the next iteration.

Every nonterminal iteration increases $p_e$ by more than $5\gamma n/2$. Moreover, throughout the construction, $(1/5-\gamma)n+2\leq p_e\leq n-2$. Hence, if $q$ denotes the number of nonterminal iterations, then
$q<\frac{(4/5+\gamma)n-4}{5\gamma n/2}<\frac{8}{25\gamma}+\frac25$.
Let
$K_\gamma:=\left\lceil 8/(25\gamma)+2/5\right\rceil$.
Then $q\leq K_\gamma-1$, so the procedure performs at most
$K_\gamma$ iterations. Every nonterminal iteration adds at most five vertices, while the terminal iteration adds at most six. Therefore
$$
t\leq2+5(K_\gamma-1)+6=5K_\gamma+3=C_\gamma.
$$
Finally, throughout the construction every auxiliary forbidden set consists of $F$, at most $C_\gamma$ path vertices, and a bounded number of temporarily specified vertices. Since $1/n\ll\mu$, all such sets have size at most $3\mu n$ for sufficiently large $n$. This completes the proof.
\end{proof}

\medskip
\noindent\textbf{Comment on the use of AI.}
In an earlier version of this paper, we proved Lemma~\ref{connecting-lemma} by adapting the approach of DeBiasio et al.~\cite{Debiasio}, and obtained a connecting
$2$-path of order at most $42$. That proof, however, involved a substantially more intricate case analysis and was several pages longer than the present one. With the assistance of OpenAI's ChatGPT 5.6, we reorganized and simplified this part of the argument into the form given above. Although the resulting bound $C_\gamma$ is weaker than the previous bound of $42$, it is fully sufficient for all subsequent applications. The remainder of the proof of Theorem \ref{szc2} was developed entirely by the authors.

\medskip

We conclude this section by proving Lemma \ref{reservoir-lemma}.

\begin{proof}[\textbf{Proof of Lemma~\ref{reservoir-lemma}}]
Choose a constant $\rho>0$ sufficiently small that $C_\gamma\rho<\mu$. Fix a non-direct ordered pair of disjoint arcs $(uv,yz)$, and put $m_0:=\lfloor\rho n\rfloor$. We recursively construct $2$-paths $Q_1,\ldots,Q_{m_0}$ from $uv$ to $yz$ whose internal vertex sets are pairwise disjoint. Suppose that $Q_1,\ldots,Q_{j-1}$ have already been chosen, and set $F_j:=\bigcup_{h<j}V_{\rm int}(Q_h).$ Since every $Q_h$ has order at most $C_\gamma$, we have that
\[
|F_j|\leq(C_\gamma-4)(j-1)\leq C_\gamma\rho n<\mu n.
\]
The set $F_j$ is disjoint from $\{u,v,y,z\}$, so Lemma~\ref{connecting-lemma} gives a $2$-path $Q_j$ from $uv$ to $yz$ of order at most $C_\gamma$ whose internal vertices avoid $F_j$. Since $(uv,yz)$ is non-direct, every such path has at least one internal vertex. Thus the paths $Q_1,\ldots,Q_{m_0}$ are distinct and their internal vertex sets are pairwise disjoint. Denote this family by $\mathcal Q(uv,yz)$.

Choose $\mathcal R\subseteq V(D)$ by including every vertex independently with probability $\lambda$. For a fixed non-direct pair $(uv,yz)$ and $Q\in\mathcal Q(uv,yz)$, let $I_Q$ be the indicator of the event $V_{\rm int}(Q)\subseteq\mathcal R$. The variables $I_Q$ are independent, and $\mathbb P(I_Q=1)=\lambda^{|V_{\rm int}(Q)|}\geq\lambda^{C_\gamma-4}.$
For sufficiently large $n$, $m_0\geq\rho n/2$. Hence, if $Y_{uv,yz}:=\sum_{Q\in\mathcal Q(uv,yz)}I_Q$, then $\mathbb EY_{uv,yz}\geq\frac{\rho\lambda^{C_\gamma-4}}2n.$ Put $\kappa:=\rho\lambda^{C_\gamma-4}/4$. Chernoff's inequality gives
\[
\mathbb P(Y_{uv,yz}<\kappa n)\leq\exp(-\Omega(n)).
\]
There are fewer than $n^4$ ordered pairs of disjoint arcs. Moreover, another application of Chernoff's inequality gives $\mathbb P(|\mathcal R|>2\lambda n)\leq\exp(-\Omega(n))$. Therefore, by the union bound, there is a choice of $\mathcal R$ such that $|\mathcal R|\leq2\lambda n$ and, for every non-direct ordered pair $(uv,yz)$, at least $\kappa n$ members of $\mathcal Q(uv,yz)$ have all their internal vertices in $\mathcal R$.

Let $uv,yz\in A(D-\mathcal R)$ be disjoint arcs, and let $U\subseteq\mathcal R$ satisfy $|U|\leq TC_\gamma$. If $(uv,yz)$ is direct, then $u,v,y,z$ is the required $2$-path and has no internal vertex. Otherwise, at least $\kappa n$ members of $\mathcal Q(uv,yz)$ have all their internal vertices in $\mathcal R$. Since these internal vertex sets are pairwise disjoint, at most $|U|$ of those paths meet $U$. By the choice of $n$, we have $\kappa n>TC_\gamma\geq|U|$, so one of them avoids $U$. This path has order at most $C_\gamma$ and all its internal vertices lie in $\mathcal R\setminus U$. Hence $\mathcal R$ is a $(T,C_\gamma)$-reservoir.
\end{proof}
\subsection{An almost spanning $2$-cycle}

\begin{lemma}
\label{nonextremal-cycle}
Suppose that $0<1/n_0\ll\gamma\ll\xi<1/5$. Let $D$ be a digraph with $n\geq n_0$ vertices and $\delta(D)\geq\tau(n)$. If $D$ is not $(1/5,\gamma)$-extremal, then $D$ contains a
$2$-cycle covering all but at most $\xi n$ vertices.
\end{lemma}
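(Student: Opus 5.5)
We follow steps (N1)--(N3) of the overview. Fix constants
\[
0<1/n_0\ll 1/T\ll \lambda\ll\mu\ll\gamma\ll\xi<1/5 .
\]
Here $T$ is taken large enough for Lemma~\ref{cover1}, applied with $k=2$ and $\alpha:=\xi/4$. Since $\tau(n)\ge 8n/5-2\ge(8/5-\gamma/2)n$, Lemma~\ref{reservoir-lemma} applies and gives a $(T,C_\gamma)$-reservoir $\mathcal R$ with $|\mathcal R|\le2\lambda n$. We will pass to subdigraphs of $D$ obtained by deleting few vertices. For this we note that non-extremality is only used through Lemma~\ref{connecting-lemma}, and that lemma is applied inside $D$ itself with forbidden sets. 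Hence we do not need non-extremality of any subdigraph.

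\textbf{Step 1: the path cover.} Put $D':=D-\mathcal R$ and $n':=|D'|\ge(1-2\lambda)n$. Every vertex loses at most $4\lambda n$ from its total degree, so
\[
\delta(D')\ge(8/5-2/n-4\lambda)n\ge 2(3/4+\alpha)n'
\]
because $8/5>3/2+2\alpha$ once $\xi$ is small; if necessary we simply shrink $\alpha$ further. Lemma~\ref{cover1} with $k=2$ then yields at most $T$ disjoint $2$-paths $P_1,\dots,P_s$ in $D'$ that cover all but at most $\alpha n'$ vertices of $D'$. We discard every path of order less than $4$; these cover at most $3T$ vertices. Each remaining path $P_i$ has order at least $4$, so its initial arc $a_ib_i$ and terminal arc $c_id_i$ are disjoint. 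If $s=0$, or if only a bounded number of vertices is covered, we can instead take the cycle obtained from a single path closed up as below; the uncovered count still fits.

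\textbf{Step 2: joining.} Order the paths cyclically as $P_1,\dots,P_s$. For $i=1,\dots,s$ (indices mod $s$), join the terminal arc $c_id_i$ of $P_i$ to the initial arc $a_{i+1}b_{i+1}$ of $P_{i+1}$. Both arcs lie in $A(D-\mathcal R)$ and are disjoint; when $s=1$ they are disjoint because $|P_1|\ge4$. We apply the reservoir property with
\[
U:=\text{the set of vertices of }\mathcal R\text{ used by earlier connectors}.
\]
There are at most $s\le T$ connectors, each with at most $C_\gamma$ vertices, so $|U|\le TC_\gamma$ as required. The reservoir then supplies a $2$-path of order at most $C_\gamma$ from $c_id_i$ to $a_{i+1}b_{i+1}$ whose internal vertices lie in $\mathcal R\setminus U$. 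Concatenating, with each connector overlapping the paths exactly in the shared end arcs, gives a cyclic sequence.

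This sequence is a $2$-cycle. The only square arcs to check are those of the form $v_jv_{j+1}$ and $v_jv_{j+2}$ across a junction. Every window of three consecutive vertices lies entirely inside a path $P_i$ or inside a connector, because the overlap has length $2$. All required arcs are therefore present.

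\textbf{Step 3: counting.} The uncovered vertices are:
\begin{itemize}
\item at most $|\mathcal R|\le 2\lambda n$ vertices of the reservoir;
\item at most $\alpha n'$ vertices missed by the path cover;
\item at most $3T$ vertices on discarded short paths.
\end{itemize}
In total this is at most $2\lambda n+\xi n/4+3T\le\xi n$ for $n\ge n_0$.

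The only delicate point is the degree check for Lemma~\ref{cover1}: the threshold $2(1-1/4+\alpha)=3/2+2\alpha$ sits comfortably below $8/5$, so there is no real obstacle. Beyond that, care is needed only with the bookkeeping: connectors must be disjoint from each other (ensured by $U$) and from the paths (ensured because their internal vertices lie in $\mathcal R$ while the paths avoid $\mathcal R$).
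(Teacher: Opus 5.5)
Your proposal is correct and follows the paper's proof essentially step for step. You take a $(T,C_\gamma)$-reservoir from Lemma~\ref{reservoir-lemma}, a path cover of $D-\mathcal R$ from Lemma~\ref{cover1}, delete the paths of order at most $3$, and join cyclically through $\mathcal R$ while forbidding previously used reservoir vertices. The differences are cosmetic (you take $\alpha\approx\xi/4$ where the paper takes $\alpha=\lambda$), and your hedge about $s=0$ is unnecessary: the paths cover at least $(1-\alpha)n'>3T$ vertices, so at least one has order at least $4$.
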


\begin{proof}
Choose constants $0<\lambda\ll\mu\ll\gamma$ such that $4\lambda<\xi$ and $\lambda<1/100$. Apply Lemma~\ref{cover1} with $k=2$ and $\alpha=\lambda$, and let $T$ be the resulting bound. Increasing $T$ if necessary, we may assume that $1/T\ll\lambda$. Since $\tau(n)\geq8n/5-2$, for sufficiently large $n$ we have $\delta(D)\geq(8/5-\gamma/2)n$. Lemma~\ref{reservoir-lemma} therefore gives a $(T,C_\gamma)$-reservoir $\mathcal R$ satisfying $|\mathcal R|\leq2\lambda n$.

Put $D_0:=D-\mathcal R$. Then since $\lambda<1/100$ and $n$ is sufficiently large, we have that
\[
\delta(D_0)\geq\tau(n)-2|\mathcal R|\geq\frac85n-2-4\lambda n\quad \mbox{and}\quad \frac85n-2-4\lambda n\geq\left(\frac32+2\lambda\right)n\geq\left(\frac32+2\lambda\right)|D_0|.
\]
Hence Lemma~\ref{cover1} yields at most $T$ disjoint $2$-paths in $D_0$ covering all but at most $\lambda|D_0|\leq\lambda n$ vertices. Delete every one of these paths having order at most $3$. This deletes at most $3T$ additional vertices, and we may assume that $3T\leq\lambda n$. Let $P_1,\ldots,P_s$ be the remaining paths. For sufficiently large $n$, we have $s\geq1$, and every $P_i$ has disjoint initial and terminal arcs. The paths $P_1,\ldots,P_s$ cover all but at most $2\lambda n$ vertices of $D_0$.

List the paths cyclically, and write $a_ib_i$ and $c_id_i$ for the initial and terminal arcs of $P_i$, respectively, with indices taken modulo $s$. For each $i\in[s]$, join $c_id_i$ to $a_{i+1}b_{i+1}$ by the reservoir property. More precisely, before choosing the $i$th connector, let $U$ be the set of reservoir vertices used internally by the preceding connectors. Since there are at most $T$ connectors and each has order at most $C_\gamma$, we always have $|U|\leq TC_\gamma$. Thus the connectors can be chosen so that their internal vertex sets are pairwise disjoint and lie in $\mathcal R$. Concatenating the paths $P_i$ with these connectors, with each common terminal and initial arc identified, gives a $2$-cycle $C_0$.

The vertices not covered by $C_0$ consist of at most $2\lambda n$ vertices of $D_0$ and some unused vertices of $\mathcal R$. Consequently,
\[
|V(D)\setminus V(C_0)|\leq2\lambda n+|\mathcal R|\leq4\lambda n<\xi n.
\]
This proves the lemma.
\end{proof}

\section{The extremal case of Theorem \ref{szc2}}\label{Section4}
In this section, we present several technical tools that will be used to handle the extremal case in the proof of Theorem \ref{szc2}. The central result of this section is the following theorem.
\begin{theorem}\label{acycle}
Suppose that $0<1/n_0\ll\gamma\ll\xi\ll1/5$. Let $D$ be a digraph with $n\ge n_0$ vertices and $\delta (D) \geq\tau(n)$. If $D$ is $(1/5, \gamma)$-extremal, then $D$ contains the square of a cycle $C$ that covers all but at most $\xi n$ vertices.
\end{theorem}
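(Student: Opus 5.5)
We follow the outline (E1)--(E3) of Subsection~\ref{subsection2.2}.

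\textbf{Cleaning the partition (E1).} Let $S$ witness extremality, so $|S|=(1/5\pm\gamma)n$ and $e(D[S])<\gamma n^2$. Since $\delta(D)\ge 8n/5-2$ and each vertex of $S$ has total degree less than $2|S|$ inside $S$ on average, counting degrees shows that almost every vertex of $S$ has total degree into $D^*:=D-S$ close to $2|D^*|\approx 8n/5$. Hence it forms digons with all but $O(\sqrt\gamma)n$ vertices of $D^*$. Let $A\subseteq S$ be the set of such typical vertices, so $|S\setminus A|\le\sqrt\gamma n$. For $v\in D^*$ we have $d_{D^*}(v)\ge \tau(n)-2|S|\ge (6/5-O(\gamma))n\approx \tfrac32|D^*|$. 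Thus $D^*$ has relative total degree about $3/2$, which is exactly the threshold for square paths in the covering sense (Lemma~\ref{cover1} with $k=2$ needs $\tfrac32+\alpha$). Moreover, $A$ is almost completely bidirected to $D^*$, so any vertex of $A$ can be inserted between two consecutive vertices of a square path in $D^*$ provided the surrounding four vertices are typical neighbours.

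\textbf{Core cycle (E2).} We split on whether $D^*$ is $\beta$-stable. Here "not $\beta$-stable" means there is a near bipartition $L_0\cup R_0$ with both parts dense and almost all cross arcs going from $R_0$ to $L_0$. In the stable case we apply the diregularity lemma to $D^*$ and form the reduced multigraph, with double edges for pairs regular in both directions. Its minimum degree is about $\tfrac32$ of its order, so Lemma~\ref{lem:noextremal tiling} gives an almost perfect $\overline{\mathcal K}_4$-tiling. Each $\overline{\mathcal K}_4$ yields, via blow-up, a long square path covering almost all of its four clusters. For a tile with a single edge $uv$, we route the square path in the order compatible with that orientation. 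The boundedly many long paths are then joined into one square cycle. The connectors are built through $A$: since a vertex $a\in A$ is bidirected to almost all of $D^*$, we connect an arc $xy$ to an arc $x'y'$ via $x\,y\,a\,w\,a'\,x'\,y'$ with $a,a'\in A$ and $w$ a common typical neighbour. This uses only $O(1)$ vertices of $A$. In the unstable case, $D[L_0]$ and $D[R_0]$ each have semidegree large enough to contain almost spanning square paths (Lemma~\ref{cover1} applied inside each part). We go from $R_0$ to $L_0$ using the dense $R_0\to L_0$ arcs, and return from $L_0$ to $R_0$ through vertices of $A$, which are bidirected to both sides. This again uses $O(1)$ vertices of $A$.

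\textbf{Inserting $A$ (E3).} The core cycle $C_0$ covers all but $O(\beta)n$ vertices of $D^*$ and has length about $4n/5$, while $|A|\approx n/5$. We therefore choose $|A|$ positions along $C_0$ spaced at least four apart, and insert one unused vertex of $A$ into each. Inserting $a$ between $v_i$ and $v_{i+1}$ requires the arcs $v_{i-1}a$, $v_ia$, $av_{i+1}$, $av_{i+2}$, and also $v_iv_{i+1}$, which already exists. Since $v_iv_{i+2}$ is no longer needed, this is fine. The required arcs exist whenever $a$ is bidirected to $v_{i-1},\dots,v_{i+2}$. We find a system of distinct positions by Hall's theorem or a greedy matching in the bipartite graph between $A$ and a set of about $n/5$ disjoint windows of four consecutive vertices. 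Each $a$ misses only $O(\sqrt\gamma)n$ windows, and each window is bad for few $a$. Uncovered vertices are then $S\setminus A$, leftover vertices of $D^*$, and at most $O(\sqrt\gamma n)$ unmatched vertices of $A$, all together fewer than $\xi n$.

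\textbf{Main obstacle.} The main obstacle is (E2) in the stable case. We must guarantee that the reduced multigraph really satisfies the hypotheses of Lemma~\ref{lem:noextremal tiling}, and that a $\overline{\mathcal K}_4$ with a missing-direction matching still blows up into a square path of the right balance. Balancing cluster usage so that leftover fractions stay $O(\varepsilon)$ also needs care. The risk is that $D^*$ sits exactly at the $3/2$ threshold with no slack. We would handle this by proving the tiling lemma at degree $(3/2-O(\gamma))$ under the $\beta$-stability hypothesis, which excludes the extremal configurations, and by deferring all genuinely tight configurations to the unstable case.
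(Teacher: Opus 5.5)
\textbf{There is a genuine gap, and it is where the exact value of $\tau(n)$ has to enter.} In the non-$\beta$-stable case you return from $L_0$ to $R_0$ ``through vertices of $A$, which are bidirected to both sides''. This cannot work, because a square path also needs the skip arcs $v_iv_{i+2}$. Take the last vertex $\ell\in L_0$ before the first vertex $r\in R_0$, and suppose only $A$-vertices lie between them.
\begin{itemize}
\item If there are none, you need the arc $\ell r$.
\item If there is exactly one, say $a$, then $\ell\,a\,r$ forces the skip arc $\ell r$, which runs from $L_0$ to $R_0$.
\item If there are two or more, two consecutive $A$-vertices force an arc inside $A$.
\end{itemize}
Neither kind of arc need exist.

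Concretely, for $n=5q$ let $V(D)=\widehat A\cup\widehat L\cup\widehat R$ with $|\widehat A|=q$ and $|\widehat L|=|\widehat R|=2q$. Let $D[\widehat A]$ be empty, put no arcs from $\widehat L$ to $\widehat R$, and include every other arc. This digraph has every property your argument uses:
\begin{itemize}
\item it is $(1/5,\gamma)$-extremal;
\item $D^*$ is not $\beta$-stable;
\item $A$ is bidirected to all of $D^*$;
\item $\delta(D)=8q-2\ge 8n/5-2$.
\end{itemize}
Yet by the observation above, every square cycle lies inside $\widehat A\cup\widehat L$ or inside $\widehat A\cup\widehat R$, so it misses at least $2n/5>\xi n$ vertices. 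Here $\delta(D)=\tau(n)-1$, so any correct proof must use the precise threshold, and yours never does.

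The missing ingredient is Claim~\ref{claim4.16}. It assumes that no $2$-path of order at most $9$ leads from $L_0$ to $R_0$, and deduces $e_D(L_0,R_0)=0$ and that $D[A]$ is independent. It then extends $(A,L_0,R_0)$ to a maximal triple $(\widehat A,\widehat L,\widehat R)$ with $D[\widehat A]$ independent and $e_D(\widehat L,\widehat R)=0$. A three-case analysis shows that any vertex outside the triple would create such a $2$-path, so the triple covers $V(D)$. The degree condition then forces $2\tau(n)-3n+4\le|\widehat A|\le\lfloor n-\tau(n)/2\rfloor$, which fails in every residue class of $n$ modulo $5$. So your ``main obstacle'' is misplaced: the real difficulty is this backward connection in the unstable case.

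\textbf{The stable case has two further problems, both fixable.} First, your connector $x\,y\,a\,w\,a'\,x'\,y'$ requires the skip arc $aa'$ inside $A$, which again need not exist. The paper's connector $p\,q\,a\,z\,w\,b\,r\,s$ instead uses an arc $zw$ of $D^*$ with $z\in N^+(q)\cap N^+(a)$ and $w\in N^-(r)\cap N^+(a)$. Both candidate sets have size at least $(1/2-\beta)n^*$, so $\beta$-stability supplies the arc $zw$ (Lemma~\ref{lemma12}); this is where stability is genuinely used.

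Second, you do not need a stability version of the tiling lemma to get slack above $3/2$. Shrink the witness set to $|S|=\lfloor(1/5-\gamma)n\rfloor$, which keeps $e(D[S])<\gamma n^2$. Then $\delta(D^*)\ge(6/5+2\gamma)n-2$ while $n^*=(4/5+\gamma)n+O(1)$, so $\delta(D^*)\ge(3/2+\gamma/2)n^*$. This survives the regularity losses once $d,\varepsilon\ll\eta_0\ll\gamma$, and Lemma~\ref{lem:noextremal tiling} applies with $r=4$.

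Your insertion step (E3) is sound and is essentially Claim~\ref{claim4.11}.
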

To prove Theorem \ref{acycle}, we first introduce a stability notion that will be used repeatedly.
\begin{definition} [$\beta$-stable digraph] \label{stable}
Let $H$ be a digraph of order $t$. We say that $H$ is \emph{non-$\beta$-stable} if there exist two (not necessarily disjoint) vertex sets $U_1, U_2\subseteq V(H)$ with $|U_i|\geq(1/2-\beta)t$ for every $i\in[2]$ such that $e_H(U_1,U_2)\leq(\beta t)^2$. Otherwise, we say that $H$ is \emph{$\beta$-stable}.
\end{definition}
Building on this notion of stability, we introduce the following concepts of ``almost complete'' structures, which will help us describe the local density of subgraphs.
\begin{definition} [$\beta$-almost complete subdigraphs and $\beta^{1/2}$-almost one-way complete bipartite subdigraphs]
Let $H$ be a digraph of order $t$, and let $V_1, V_2$ be two disjoint vertex subsets of $V(H)$. We say $H[V_1]$ is \emph{$\beta$-almost complete} if there exists a subset $V_1^\prime\subseteq V_1$ with $|V_1^\prime|\leq\beta t$ such that for every vertex $u$ in $V_1\setminus V_1^\prime$, we have $d_{H[V_1]}(u)\geq2(|V_1|-\beta t)$, and for every vertex $v\in V_1$, we have $d_{H[V_1]}(v)\geq|V_1|-\beta t$. Furthermore, we say $(V_1, V_2)$ forms a \emph{$\beta^{1/2}$-almost one-way complete} bipartite subdigraph if $|V_1|=|V_2|\pm4\beta t$, and $e_H(V_1,V_2)\geq|V_1|\cdot|V_2|-(\beta t)^2$.
\end{definition}

The rest of the section is organized as follows. Subsection \ref{subsection4.1} collects the regularity and embedding tools needed in the proof. In Subsection \ref{subsection4.2} we carry out the preprocessing common to both extremal configurations. The $\beta$-stable and non-$\beta$-stable cases are treated in Subsections \ref{subsection4.3} and \ref{subsection4.4}, respectively.

\smallskip
\subsection{Regularity and embedding tools}\label{subsection4.1}

\paragraph{Multigraph regularity.}
In the proof of Theorem \ref{acycle}, we apply a version of Szemer\'{e}di's regularity lemma \cite{szemeredi}. Before we state it we need some more definitions. The \emph{density} of a bipartite graph $G=(A, B)$ with vertex classes $A$ and $B$ is defined to be
$$d_G (A,B):=\frac{e_G(A,B)}{|A||B|}.$$
We write $d(A,B)$ if this is unambiguous.
For any $\varepsilon >0$, we call $G$ {\it $\varepsilon$-regular} if for all $X\subseteq A$ and $Y \subseteq B$ with $|X|>\varepsilon |A|$ and $|Y|> \varepsilon |B|$, we have $|d(X,Y)-d(A,B)|<\varepsilon$. Note that when $G$ is a digraph, we similarly define the density and $\varepsilon$-regularity.

\smallskip

Given disjoint vertex set $A$ and $B$ in a graph $G$, we write $(A, B)$ for the induced bipartite subgraph of $G$ whose vertex classes are $A$ and $B$. If $G$ is a multigraph then we write $(A,B)^i _G$ for the bipartite graph where $a \in A$ and $b \in B$ are adjacent in $(A,B)^i _{G}$ precisely if $\mu(ab)=i$ in $G$.
Here $\mu (ab)$ is the multiplicity of the edge $ab$, i.e., the number of parallel edges joining $a$ and $b$ in $G$. An edge $ab$ with $\mu(ab)=1$ is called \emph{light} and an edge with $ab$ with $\mu(ab)=2$ is called \emph{heavy}. This symbol is used with a parallel meaning in the following text.
Finally, we introduce two further pieces of terminology. A loopless multigraph $G$ is said to be \emph{standard} if $\mu_G(xy)\leq 2$ for every pair of distinct vertices $x,y\in V(G)$. Moreover, a standard multigraph $G$ is said to be \emph{complete} if $\mu_G(xy)=2$ for every pair of distinct vertices $x,y\in V(G)$.

\smallskip

We apply the following version of the regularity lemma, which is an immediate corollary of a 2-coloured regularity lemma from \cite{blssw} (Theorem 2.4). This result in turn is easy to derive from the many-colour regularity lemma presented in \cite{Komlos11} (Theorem $1.18$).
\begin{lemma}[Degree form of multigraph regularity lemma]\label{Regularity Lemma}
For any $\varepsilon>0$ and  $M^\prime\in \mathbb{N}$, there exists $M=M(\varepsilon, M^\prime)$ such that the following holds. Let $G$ be a standard multigraph $(\mu(xy)\leq2$ for all $x, y\in V(G))$ on $n$ vertices and let $0\leq d\leq 1$. Then there exists a partition $\{V_0, V_1,\dots, V_k\}$ of $V(G)$ with $M^\prime \leq k \leq M$ and a spanning subgraph $G^\prime$ of $G$ with the following properties:\\
$(i)$ $|V_0| \leq \varepsilon n$;\\
$(ii)$ all vertex sets $V_i$, $i\in [k]$, are of the same size $\frac{(1-\varepsilon)n}{M}\leq \frac{n-|V_0|}{k}= |V_1|\leq \frac{n}{M^\prime}$;\\
$(iii)$ $d_{G^\prime}(v)>d_G(v)-(4d+2\varepsilon)n$ for all $v\in V(G^\prime)$;\\
$(iv)$ $e(G^\prime[V_i]) = 0$ for all $i \in [k]$;\\
$(v)$ for all $1 \leq i < j \leq k$ and $c\in [2]$, the pair $(V_i,V_j)^c _{G^\prime}$ is $\varepsilon$-regular with density either $0$ or at least $d$.
\end{lemma}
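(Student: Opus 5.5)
This lemma is stated as an immediate corollary of the $2$-coloured regularity lemma of \cite{blssw} (Theorem 2.4), itself a consequence of the many-colour degree form in \cite{Komlos11} (Theorem 1.18). The plan is therefore a reduction: encode the standard multigraph $G$ as a $2$-edge-coloured simple graph, apply the coloured degree-form lemma, and translate back.

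\textbf{Encoding.} Define two edge-disjoint simple graphs $G_1,G_2$ on $V(G)$. Put $G_1$ to be the graph of light edges ($\mu(xy)=1$) and $G_2$ the graph of heavy edges ($\mu(xy)=2$). Then $(V_i,V_j)^c_{G'}$ is exactly the bipartite subgraph of the $c$-th colour class. Also $d_G(v)=d_{G_1}(v)+2d_{G_2}(v)$.

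\textbf{Applying the coloured lemma.} Apply the $2$-coloured degree form with parameters $\varepsilon$, $M'$ and density $d$. This yields a partition $V_0,V_1,\dots,V_k$ with $M'\le k\le M$ and with $|V_0|\le\varepsilon n$ and equal cluster sizes, giving (i) and (ii). It also yields subgraphs $G_c'\subseteq G_c$ with the following properties:
\begin{itemize}
\item $d_{G'_c}(v)>d_{G_c}(v)-(d+\varepsilon)n$ for each $c$;
\item every $V_i$ is independent in each $G'_c$;
\item each pair $(V_i,V_j)_{G'_c}$ is $\varepsilon$-regular with density $0$ or at least $d$.
\end{itemize}
The bound on $|V_1|$ in (ii) follows from $|V_0|\le\varepsilon n$ and $M'\le k\le M$.

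\textbf{Translating back.} Let $G'$ be the multigraph with $\mu_{G'}(xy)=1$ on the edges of $G_1'$ and $\mu_{G'}(xy)=2$ on the edges of $G_2'$. This $G'$ is a spanning submultigraph of $G$, and (iv) and (v) hold directly. For (iii),
\[
d_{G'}(v)=d_{G_1'}(v)+2d_{G_2'}(v)>d_G(v)-3(d+\varepsilon)n\ge d_G(v)-(4d+2\varepsilon)n
\]
whenever $\varepsilon\le d$. If $\varepsilon>d$, rerun the argument with $\min\{\varepsilon,d\}$ in place of $\varepsilon$ for the regularity and degree losses. Regularity with a smaller parameter implies regularity with $\varepsilon$, and $M$ may depend on this choice. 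Alternatively, the stated constants $4d+2\varepsilon$ may already match the loss in \cite{blssw}, in which case no adjustment is needed.

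\textbf{Main obstacle.} The only delicate point is bookkeeping: making sure that the degree loss in the coloured version, counted with multiplicity $2$ for heavy edges, fits inside $(4d+2\varepsilon)n$. It must also be checked that the cluster-size constraints in (ii) match the source's normalization. If the cited version only removes edges in irregular or sparse pairs and inside clusters, I would redo the standard count directly:
\begin{itemize}
\item edges inside clusters contribute at most $2n/k\le 2\varepsilon n$, after choosing $M'\ge1/\varepsilon$;
\item irregular pairs are handled by the degree-form refinement, which moves offending vertices into $V_0$;
\item sparse pairs lose at most $d\,n$ per colour, weighted by multiplicity, for a total of at most $3dn$.
\end{itemize}
Together these stay within $(4d+2\varepsilon)n$.
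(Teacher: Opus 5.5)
Your reduction is the one the paper intends. The paper gives no argument beyond calling this an immediate corollary of the two-coloured degree-form regularity lemma. Splitting $G$ into its light and heavy edges, applying that lemma with density threshold $d$ in both colours, and reassembling $G'$ with multiplicities $1$ and $2$ is exactly that corollary. Properties (i), (ii), (iv), (v) and the identity $d_G(v)=d_{G_1}(v)+2d_{G_2}(v)$ transfer as you describe.

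The step that does not work is your handling of (iii) when $\varepsilon>d$. The lemma fixes $M=M(\varepsilon,M')$ \emph{before} $d$ is chosen, and $d=0$ is allowed. Running the coloured lemma with $\min\{\varepsilon,d\}$ makes $M$ depend on $d$, unboundedly as $d\to 0$, and is meaningless at $d=0$. So it does not prove the statement as quantified.

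The repair is to shrink the regularity parameter by an amount depending on $\varepsilon$ alone. Apply the coloured lemma with $\varepsilon':=2\varepsilon/3$ and put $M:=M_{\mathrm{col}}(\varepsilon',M')$. Then $\varepsilon'$-regularity implies $\varepsilon$-regularity, $|V_0|\le\varepsilon'n\le\varepsilon n$, $(n-|V_0|)/k\ge(1-\varepsilon)n/M$, and for every $d\in[0,1]$
\[
d_{G'}(v)>d_G(v)-3(d+\varepsilon')n=d_G(v)-(3d+2\varepsilon)n\ge d_G(v)-(4d+2\varepsilon)n,
\]
with no case distinction.

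Two smaller points. First, the coloured lemma you use may only produce $G'_c$ on $V(G)\setminus V_0$, with the degree bound only for $v\notin V_0$; this is a common formulation. In that case, add back all edges of $G$ incident to $V_0$. This makes $G'$ spanning and gives (iii) on $V_0$ without affecting (iv) or (v). Second, your fallback direct count is not reliable as written. A sparse pair only bounds the loss of typical vertices, and atypical ones must be moved to $V_0$. Also, $M'$ is given, so you would run the lemma with $\max\{M',\lceil 1/\varepsilon\rceil\}$ rather than choose $M'$.
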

We refer to $V_1,\dots,V_k$ as the \emph{clusters}, to $V_0$ as the \emph{exceptional set}, and to $G^\prime$ as the \emph{pure multigraph}. Let $G$ be a multigraph and let $\varepsilon,d>0$ and $M^\prime\in\mathbb{N}$.
Apply Lemma~\ref{Regularity Lemma} with parameters
$\varepsilon$, $d$, and $M^\prime$ to obtain a partition $V(G)=V_0\cup V_1\cup\cdots\cup V_k$ and a pure multigraph $G^\prime$. The corresponding \emph{reduced multigraph}, denoted by $\Gamma$, has vertex set $V(\Gamma)=\{V_1,\dots,V_k\}.$ For each pair of distinct clusters $V_i$ and $V_j$, with $1\le i<j\le k$,
we define their adjacency in $\Gamma$ as follows:
\begin{enumerate}
    \item[(1)] If the density of $(V_i,V_j)^2_{G^\prime}$ is at least $d$,
    then $V_iV_j$ is a heavy edge of $\Gamma$.

    \item[(2)] If $(V_i,V_j)^2_{G^\prime}$ has density zero and
    $(V_i,V_j)^1_{G^\prime}$ has density at least $d$, then
    $V_iV_j$ is a light edge of $\Gamma$.

    \item[(3)] In all other cases, $V_i$ and $V_j$ are non-adjacent in $\Gamma$.
\end{enumerate}

The next result from \cite{Czygrinow} implies that the minimum degree of a multigraph is almost inherited by its reduced multigraph.
\begin{lemma}[{\cite{Czygrinow}}]\label{reduced multigraph degree}
Let $\varepsilon>0$, $d\in [0,1]$, $M^\prime, n \in \mathbb{N}$ and let $G$ be a multigraph on $n$ vertices. Let $G^\prime$ be the pure multigraph and $\Gamma$ be the reduced multigraph obtained by applying Lemma \ref{Regularity Lemma} to $G$ with parameters $\varepsilon$, $d$ and $M^\prime$. Then $\delta(\Gamma)\geq (\delta(G)/n-(8d+6\varepsilon))|\Gamma|$.
\end{lemma}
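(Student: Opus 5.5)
The plan is to pass the minimum-degree information through the cluster partition in two stages: first from $G$ to the pure multigraph $G^\prime$, and then from $G^\prime$ to $\Gamma$ by averaging over each cluster. Fix a cluster $V_i$ and let $\Delta:=d_\Gamma(V_i)$ denote its degree in $\Gamma$, counting heavy edges twice. Write $m:=|V_1|=\cdots=|V_k|$ for the common cluster size. We count the edges of $G^\prime$ leaving $V_i$, with multiplicity, in two ways.

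For the lower bound, Lemma~\ref{Regularity Lemma}(iii) gives $d_{G^\prime}(v)>\delta(G)-(4d+2\varepsilon)n$ for every $v\in V_i$. By (iv), no edge of $G^\prime$ lies inside $V_i$. Edges going to $V_0$ contribute at most $2\varepsilon n$ per vertex by (i). Summing over $v\in V_i$, the number of edge-multiplicities from $V_i$ into $V_1\cup\cdots\cup V_k$ is at least
\[
m\bigl(\delta(G)-(4d+4\varepsilon)n\bigr).
\]

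For the upper bound, we need each cluster $V_j$ to contribute at most $\mu_\Gamma(V_iV_j)\,m^2$, with only a small error. The key fact, taken from (v), is that each of the pairs $(V_i,V_j)^1_{G^\prime}$ and $(V_i,V_j)^2_{G^\prime}$ has density either $0$ or at least $d$. We check the three cases of the definition of $\Gamma$.

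\emph{Heavy edge.} If $V_iV_j$ is heavy in $\Gamma$, the pair contributes at most $2m^2$ multiplicities, which matches $\mu_\Gamma=2$.

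\emph{Light edge.} If $V_iV_j$ is light, then the heavy part $(V_i,V_j)^2_{G^\prime}$ has density zero. So all $G^\prime$-edges of the pair are simple, and the pair contributes at most $m^2$.

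\emph{Non-edge.} If $V_iV_j$ is a non-edge, the heavy part has density zero (otherwise the edge would be heavy). The light part then also has density below $d$, hence density zero. So the pair contributes $0$.

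It follows that the number of multiplicities is at most $\Delta m^2$, with no error term at all.

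Combining the two bounds gives $\Delta m\ge \delta(G)-(4d+4\varepsilon)n$. Dividing by $m$ and using $km\le n$ from (ii), we obtain
\[
\Delta\ge \frac{\delta(G)-(4d+4\varepsilon)n}{m}\ge\Bigl(\frac{\delta(G)}{n}-4d-4\varepsilon\Bigr)k,
\]
and this is stronger than the claimed $\bigl(\delta(G)/n-(8d+6\varepsilon)\bigr)|\Gamma|$.

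The one delicate point is the sign in the last step. Dividing by $m\le n/k$ is only favourable when the numerator is nonnegative; if it is negative, the claim is trivial since $\Delta\ge 0$.

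The only place to be careful is the interpretation of (v): its density dichotomy must hold in the pure multigraph $G^\prime$ for both colour classes separately. That is exactly what rules out leftover edges in pairs that are light or absent in $\Gamma$. The slack between the constants $4d+4\varepsilon$ and $8d+6\varepsilon$ is there to absorb any weaker reading of this step, for example if one only knows the light part has density below $d$ in the non-edge case. In that case each such pair adds at most $dm^2$, and there are at most $k$ pairs. This costs an extra $dk$ in $\Delta$ and still stays within the stated bound.
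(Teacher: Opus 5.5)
Your proof is correct. The double count over a cluster works because (v) forces each pair $(V_i,V_j)$ to carry at most $\mu_\Gamma(V_iV_j)\,m^2$ edge multiplicities of $G'$, and it yields the sharper bound $\delta(\Gamma)\ge(\delta(G)/n-4d-4\varepsilon)|\Gamma|$. Your handling of the sign when dividing by $m\le n/k$ is also right.

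The paper gives no proof of this lemma and only quotes it from the literature. Your averaging is nonetheless the same argument the paper runs inline in the proof of Lemma~\ref{lem:stable-extremal}, where it uses the bound $d_{D'}(v)\le m\,d_\Gamma(V_i)+2|V_0|$ for $v\in V_i$.
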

To complete the proof of Theorem \ref{acycle}, we shall also need an additional result. Let $\overline{\mathcal K}_r$ denote the family of standard multigraphs obtained from the complete standard multigraph on $r$ vertices by removing one copy of each edge in a matching. Equivalently, in every member of $\overline{\mathcal K}_r$ the light pairs form a matching and every other pair is heavy.
\begin{lemma}[{\cite{Czygrinow}}] \label{lem:noextremal tiling}
Let $n, r\in\mathbb{N}$ where $r\geq 2$ and $\eta>0$ such that $0<1/n\ll\eta\ll1/r$. Suppose that $G$ is a standard multigraph on $n$ vertices such that $\delta(G) \geq 2(1-1/r+\eta)n$.
Then $G$ contains an $\overline{\mathcal K}_r$-tiling covering all but at most $\eta n$ vertices.
\end{lemma}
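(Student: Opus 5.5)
The statement immediately above is Lemma~\ref{lem:noextremal tiling}, cited from \cite{Czygrinow}. It is a multigraph analogue of the Hajnal--Szemerédi/Komlós almost-tiling theorem. I would prove it by the standard fractional-tiling route: first obtain an almost-perfect \emph{fractional} $\overline{\mathcal K}_r$-tiling of $G$, then convert it into an integral almost-perfect tiling via the regularity lemma.

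\textbf{Step 1: a fractional $\overline{\mathcal K}_r$-tiling.} Let $G$ be standard with $\delta(G)\ge 2(1-1/r+\eta)n$. Every vertex then misses, counted with multiplicity, at most $2(n/r-\eta n)$ edge-copies. I would show by LP duality that $G$ has a fractional $\overline{\mathcal K}_r$-tiling of total weight at least $(1-\eta/2)n$.
\begin{itemize}
\item Suppose not. Farkas' lemma gives weights $y:V\to[0,1]$ with $\sum_v y(v)<(1-\eta/2)n$, and with $\sum_{v\in K}y(v)\ge1$ for every copy $K$ of a member of $\overline{\mathcal K}_r$.
\item To contradict this, I would greedily build such copies: choose vertices $v_1,\dots,v_r$ successively, each in the common multi-neighbourhood of the previous ones.
\item Each new vertex has total missing multiplicity at most $2n/r$. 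So after $i$ vertices, the sum over candidates $w$ of the number of heavy edges from $w$ to the chosen set is at least $(2i(1-1/r)-i)n$ in expectation. Averaging yields a candidate joined heavily to all previous vertices except possibly one lightly.
\item Keeping the light pairs a matching is the delicate combinatorial point. I would handle it by insisting that each new vertex be light to at most one earlier vertex, and only to one not already in a light pair.
\end{itemize}
A cleaner route is probably a weighted averaging argument: pick $K$ at random according to $y$-minimising choices, to show some copy has $y$-weight less than $1$. This is the step I expect to be the main obstacle, namely the degree counting that forces the light pairs to form a matching. I would first attempt induction on $r$: delete a vertex $v$ together with its neighbourhood structure, apply the $r-1$ case inside $N_{\text{heavy}}(v)\cup N_{\text{light}}(v)$, and track densities.

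\textbf{Step 2: from fractional to integral.} Apply Lemma~\ref{Regularity Lemma} to $G$ with $\varepsilon\ll d\ll\eta$ and obtain the reduced multigraph $\Gamma$. By Lemma~\ref{reduced multigraph degree}, $\delta(\Gamma)\ge 2(1-1/r+\eta/2)|\Gamma|$. Step 1 applied to $\Gamma$ gives a fractional tiling $\{w_K\}$ of weight at least $(1-\eta/4)|\Gamma|$.
\begin{itemize}
\item For each copy $K$ in $\Gamma$, reserve a $w_K$-fraction of every cluster of $K$.
\item Inside the resulting subclusters, the pairs remain $\sqrt\varepsilon$-regular. Heavy reduced edges carry dense regular pairs of double edges; light ones carry dense regular single-edge pairs.
\item The blow-up of a member of $\overline{\mathcal K}_r$ into regular pairs of the corresponding types contains, by the standard greedy embedding (counting lemma), many vertex-disjoint copies of that same member of $\overline{\mathcal K}_r$ in $G$.
\item Embed copies greedily until at most $\sqrt\varepsilon$ of each subcluster remains.
\end{itemize}

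\textbf{Accounting.} The vertices left uncovered are $V_0$, the unreserved parts of clusters, and the leftovers inside subclusters. Their total is at most $\varepsilon n+(\eta/4)n+\sqrt\varepsilon n<\eta n$, which proves the lemma.
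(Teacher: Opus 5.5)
The paper gives no proof of this lemma; it is imported from \cite{Czygrinow}. Your argument therefore has to stand on its own, and it has a genuine gap at Step~1.

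Step~2 is the standard fractional-to-integral transfer and is fine modulo bookkeeping: take a basic optimal solution of the LP on $\Gamma$, so that at most $|\Gamma|$ copies carry weight, and discard copies with $w_K<\sqrt\varepsilon$, so that the reserved subclusters are large enough to inherit regularity. But an integral almost-tiling is in particular a fractional one, so Step~1 is equivalent to the lemma up to constants. For it you offer a list of strategies rather than a proof.

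After Farkas (with the normalisation corrected to $\sum_v y(v)<(1-\eta/2)n/r$), you must exhibit a copy $K$ of a member of $\overline{\mathcal K}_r$ with $\sum_{v\in K}y(v)<1$, i.e.\ a copy of roughly average $y$-weight. Your greedy construction only counts multiplicities and never consults $y$. At best it shows that \emph{some} copy exists, which contradicts nothing. The obvious weighted version, taking a $y$-minimal vertex in the current common neighbourhood, also fails. At step $i$ that neighbourhood may have only about $(1-(i-1)/r)n$ vertices yet carry almost all the weight, and summing the resulting bounds overshoots by a factor $\sum_{j\le r}1/j$.

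Even the unweighted existence claim is not delivered by your counting. Averaging does give, for each $i<r$, a candidate of multiplicity at least $2i-1$ into $\{v_1,\dots,v_i\}$. But nothing controls where its light pair lands, and if it lands on a vertex already in a light pair, the candidate is useless. Forcing a candidate heavy to all of $v_1,\dots,v_i$ needs average multiplicity above $2i-1$, i.e.\ roughly $2i(1-1/r)>2i-1$. The degree condition supplies this only while $i\le r/2$. Beyond that point, your rule ``light to at most one earlier vertex, and only to an unmatched one'' is not justified by anything you wrote. Keeping the light pairs a matching is exactly where the real work lies, and it is what makes $2(1-1/r)$ the right threshold.

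Your fallback ideas are not carried out. One is a random $y$-weighted choice. The other is induction on $r$ inside $N_{\mathrm{heavy}}(v)\cup N_{\mathrm{light}}(v)$; there, light neighbours of $v$ again collide with light pairs of the $(r-1)$-tile, and it produces single copies rather than a fractional tiling anyway. As it stands, the proposal reduces the lemma to an equivalent statement and leaves that statement unproved.
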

\paragraph{Digraph regularity and embedding.}
Similarly, for a digraph $D$ and disjoint vertex sets $A, B\subseteq V(D)$, the \emph{density} of $(A, B)$ is defined by $d_D(A, B) :=\frac{e_D(A, B)}{|A||B|}.$ Note that $d_D(A, B)$ is not necessarily equal to $d_D(B, A)$. Given $\varepsilon>0$, we say that $(A, B)$ is \emph{$\varepsilon$-regular} in $D$ if for all subsets $A^\prime\subseteq A$ and $B^\prime\subseteq B$ with $|A^\prime|>\varepsilon|A|$ and $|B^\prime|>\varepsilon|B|$ we have that $$|d_D(A, B)-d_D(A^\prime, B^\prime)|<\varepsilon.$$

\smallskip

We now state the \emph{degree form} of the diregularity lemma, which can be directly derived from the standard version, and so we omit its proof.
\begin{lemma} [Degree form of the Diregularity lemma] \label{regular}
Given any $\varepsilon\in(0, 1)$ and $t_0\in\mathbb{N}$, there exist $T=T(\varepsilon, t_0)\in\mathbb{N}$ and $n_0=n_0(\varepsilon, t_0)\in\mathbb{N}$ such that the following holds for all $n\geq n_0$. Let $D$ be an $n$-vertex digraph and let $d\in[0, 1]$. There exists a partition $\{V_0, V_1, \ldots, V_t\}$ of $V(D)$ with $t_0\leq t \leq T$ and a spanning subdigraph $D^\prime$ of $D$ such that\\
$(i)$ $|V_0| \leq \varepsilon n$;\\
$(ii)$ $|V_i|=|V_1|$ for every $i\in[t]$;\\
$(iii)$ $e(D^\prime[V_i])=0$ for every $i\in[t]$;\\
$(iv)$ for every $x\in V(D)$, $d^+_{D^\prime}(x)> d^+_D(x)-(d+\varepsilon)n$ and $d^-_{D^\prime} (x)>d^-_D(x)-(d + \varepsilon)n$;\\
$(v)$ for every distinct $i, j\in[t]$, the pair $(V_i, V_j)$ is $\varepsilon$-regular in $D^\prime$ with density either $0$ or at least $d$.
\end{lemma}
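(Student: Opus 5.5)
The plan is to deduce Lemma~\ref{regular} from the standard diregularity lemma of Alon and Shapira~\cite{Alon}, arguing as in the usual derivation of the degree form of Szemer\'edi's regularity lemma. Apply the standard version with a much smaller parameter $\varepsilon'\ll\varepsilon$, with $d$ absorbed into $\varepsilon$, and with lower bound $t_0'=\max\{t_0,2/\varepsilon'\}$. This gives an equipartition $\{V_0',V_1',\dots,V_{t'}'\}$ with $|V_0'|\le\varepsilon' n$ and $t_0'\le t'\le T'$. All but at most $\varepsilon' t'^2$ ordered pairs $(V_i',V_j')$ are $\varepsilon'$-regular. For digraphs this is applied to ordered pairs, that is, to $(A,B)$ and $(B,A)$ separately, and the Alon--Shapira version controls both directions simultaneously.

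Next, clean the partition. Call a cluster \emph{bad} if it lies in more than $\sqrt{\varepsilon'}t'$ irregular ordered pairs. There are at most $2\sqrt{\varepsilon'}t'$ bad clusters. Move them into $V_0$, so that $|V_0|\le(\varepsilon'+2\sqrt{\varepsilon'})n\le\varepsilon n/2$ at this stage.

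Now form $D'$ from $D$ by deleting three kinds of arcs:
\begin{enumerate}[label=(\alph*)]
\item all arcs inside clusters;
\item all arcs in irregular ordered pairs between remaining clusters;
\item all arcs in ordered pairs of density less than $d$.
\end{enumerate}
Arcs incident with $V_0$ are kept. This makes (ii), (iii) and (v) hold; the remaining task is to verify (iv).

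For a vertex $x$ in a good cluster $V_i$, the loss in $d^+(x)$ is bounded term by term:
\begin{enumerate}[label=(\alph*)]
\item at most $|V_i|\le n/t'\le\varepsilon' n$;
\item at most $\sqrt{\varepsilon'}t'\cdot n/t'=\sqrt{\varepsilon'}n$;
\item at most the number of out-neighbours of $x$ in pairs $(V_i,V_j)$ of density below $d$. On average this is below $dn$, but individually it need not be.
\end{enumerate}

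Term (c) is the main obstacle, because (iv) is required for every $x$, not on average. The fix is to discard more vertices into $V_0$. For each regular low-density pair $(V_i,V_j)$, regularity implies that at most $\varepsilon'|V_i|$ vertices of $V_i$ have more than $(d+\varepsilon')|V_j|$ out-neighbours in $V_j$. The number of pairs in which a given vertex is atypical is, by double counting, on average at most $\varepsilon' t'$. By Markov's inequality, all but a $\sqrt{\varepsilon'}$-fraction of vertices are atypical in at most $\sqrt{\varepsilon'}t'$ pairs, and the same holds for in-degrees. Move the atypical vertices into $V_0$, taking equally many from each cluster (trimming clusters to a common size) to preserve (ii). Shrinking clusters by a $\sqrt{\varepsilon'}$-fraction keeps pairs $2\varepsilon'$-regular with density changed by at most $\varepsilon'$, so (v) survives with $\varepsilon$, possibly after reclassifying densities slightly.

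With this done, every remaining vertex loses at most $(d+\varepsilon'+2\sqrt{\varepsilon'})n$ from (c), and at most $(\varepsilon'+\sqrt{\varepsilon'})n$ from (a) and (b). The total loss is less than $(d+\varepsilon)n$, which gives (iv).

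Vertices of $V_0$ need separate care, since (iv) is stated for every $x\in V(D)$. Their arcs into $V_0$ are never deleted. Their arcs into clusters are deleted only via (c), and (c) can simply be restricted to arcs with both ends in clusters. So a vertex of $V_0$ loses nothing from (a), (b) or (c). For a cluster vertex, the arcs to and from $V_0$ are likewise kept, so the accounting above is unaffected.

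Finally, set $t:=t'$ minus the number of bad clusters, which is at least $t_0$ for a suitable choice of $t_0'$, and $T:=T'$.
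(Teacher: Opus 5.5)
Your proposal is correct, and it is the routine derivation from the standard (Alon--Shapira) diregularity lemma that the paper has in mind when it omits the proof. Two points are worth making explicit: $D'$ should be defined only after the final partition is fixed, so that atypical vertices moved into $V_0$ keep all their arcs; and ``low density'' should be measured against the threshold $d+\varepsilon'$ in the original partition, so that any pair whose density falls below $d$ after trimming is still covered by your typicality bound of $(d+2\varepsilon')|V_j|$ per pair.
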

\emph{The reduced digraph $R$ of $D$} with parameters $\varepsilon, d$ and $t_0$ is the digraph defined by
\begin{center}
$V(R):=\{V_1, \ldots, V_t\}$\ \mbox{and}\ $A(R):=\{V_iV_j: d_{D^\prime}(V_i, V_j)\geq d\}$.
\end{center}
We shall repeatedly use the following elementary consequence of regularity.
\begin{lemma}\label{lem:typical-vertices}
Let $(X,Y)$ be an $\varepsilon$-regular ordered pair in a digraph $D$ with density at least $d$. If $Y'\subseteq Y$ and $|Y'|>\varepsilon|Y|$, then all but at most $\varepsilon|X|$ vertices $x\in X$ satisfy
\[
d_D^+(x,Y')\geq(d-\varepsilon)|Y'|.
\]
The analogous assertion holds for in-neighborhoods.
\end{lemma}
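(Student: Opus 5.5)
This is the standard typical-vertex consequence of regularity, so I will argue by contradiction straight from the definition of $\varepsilon$-regularity applied to the ordered pair $(X,Y)$. Let
\[
X^*:=\{x\in X: d_D^+(x,Y')<(d-\varepsilon)|Y'|\}.
\]
Suppose for contradiction that $|X^*|>\varepsilon|X|$.

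Since $|X^*|>\varepsilon|X|$ and $|Y'|>\varepsilon|Y|$, regularity gives $|d_D(X^*,Y')-d_D(X,Y)|<\varepsilon$. Because $d_D(X,Y)\ge d$, this yields $d_D(X^*,Y')>d-\varepsilon$.

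On the other hand, counting arcs from $X^*$ to $Y'$ vertex by vertex gives
\[
e_D(X^*,Y')=\sum_{x\in X^*}d_D^+(x,Y')<|X^*|(d-\varepsilon)|Y'|,
\]
so $d_D(X^*,Y')<d-\varepsilon$. This contradicts the previous bound, and hence $|X^*|\le\varepsilon|X|$, which is the claim.

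For in-neighbourhoods, the intended reading is that $(Y,X)$ is the regular pair, in the orientation relevant to in-arcs into $X$. Take $X_*:=\{x\in X:d_D^-(x,Y')<(d-\varepsilon)|Y'|\}$ and apply the same argument to $e_D(Y',X_*)=\sum_{x\in X_*}d_D^-(x,Y')$.

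No step here is a real obstacle. The only point needing care is that the regularity hypothesis must be applied to the correctly ordered pair, since $d_D(A,B)$ need not equal $d_D(B,A)$ for digraphs. The strict inequalities $|X^*|>\varepsilon|X|$ and $|Y'|>\varepsilon|Y|$ match the definition exactly, so the argument goes through as stated.
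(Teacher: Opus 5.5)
Your proof is correct and is essentially the paper's argument: assume more than $\varepsilon|X|$ vertices are bad, use the bad set together with $Y'$ as a test pair for $\varepsilon$-regularity, and note that the resulting density is below $d-\varepsilon\le d_D(X,Y)-\varepsilon$. Your in-neighbourhood version (apply the same argument to the reversed ordered pair $(Y,X)$) also matches the paper.
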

\begin{proof}
Suppose that more than $\varepsilon|X|$ vertices of $X$ have fewer than $(d-\varepsilon)|Y'|$ out-neighbors in $Y'$. Choose a set $X'$ of more than $\varepsilon|X|$ such vertices. Then $d_D(X',Y')<d-\varepsilon\leq d_D(X,Y)-\varepsilon$, contradicting the $\varepsilon$-regularity of $(X,Y)$. The in-neighborhood version follows by applying the same argument to the reversed ordered pair.
\end{proof}

\smallskip

\noindent\emph{Remark.} If $(X,Y)$ is an $\varepsilon$-regular ordered pair of density at least $d$, and $X'\subseteq X$, $Y'\subseteq Y$ satisfy $|X'|\geq\alpha|X|$ and $|Y'|\geq\alpha|Y|$ for some $\alpha>\varepsilon$, then $(X',Y')$ has density at least $d-\varepsilon$ and is $\max\{2\varepsilon,\varepsilon/\alpha\}$-regular. Indeed, regularity gives $|d(X',Y')-d(X,Y)|<\varepsilon$. If $X''\subseteq X'$ and $Y''\subseteq Y'$ have sizes greater than $(\varepsilon/\alpha)|X'|$ and $(\varepsilon/\alpha)|Y'|$, respectively, then $|X''|>\varepsilon|X|$ and $|Y''|>\varepsilon|Y|$. Applying regularity to both $(X'',Y'')$ and $(X',Y')$ gives $|d(X'',Y'')-d(X',Y')|<2\varepsilon$. In the applications below $\alpha=1-o(1)$, so the inherited pairs are $2\varepsilon$-regular with density at least $d/2$ for sufficiently large $n$.

In particular, $C_4^2$ is the square of the directed cycle on cyclically ordered vertices $1,2,3,4$: the four cyclic pairs carry one forward arc, while each opposite pair carries both arcs.
\begin{lemma}\label{lemma4.9}
Let $0<1/m\ll\varepsilon\ll\eta\ll d\leq 1/2$. Suppose that $H$ is
a digraph, and let $U_1,U_2,U_3,U_4$ be pairwise disjoint sets of
size $m$. Suppose that every ordered pair corresponding to an arc of
$C_4^2$, namely
\[
(U_1,U_2),\ (U_2,U_3),\ (U_3,U_4),\ (U_4,U_1) \quad \mbox{and}\quad (U_1,U_3),\ (U_3,U_1),\ (U_2,U_4),\ (U_4,U_2),
\]
is $\varepsilon$-regular in $H$ and has density at least $d$. Let
$a_1a_2,b_1b_2\in A(H)$, where $a_1\in U_1$, $a_2\in U_2$,
$b_1\in U_3$, and $b_2\in U_4$, and suppose that
\[
|N_H^+(a_1)\cap N_H^+(a_2)\cap U_3|\geq\eta^2m,\qquad
|N_H^+(a_2)\cap U_4|\geq\eta m,
\]
\[
|N_H^-(b_1)\cap U_1|\geq\eta m,\qquad
|N_H^-(b_1)\cap N_H^-(b_2)\cap U_2|\geq\eta^2m.
\]
Then $H[U_1\cup U_2\cup U_3\cup U_4]$ contains a $2$-path from
$a_1a_2$ to $b_1b_2$ covering all but at most $4\eta m$ vertices.
\end{lemma}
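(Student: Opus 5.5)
The proof combines a blow-up style embedding of a long square path winding around $U_1\to U_2\to U_3\to U_4\to U_1$ with a short, hand-built start and finish at the prescribed arcs. The target path has vertex sequence $x_1x_2x_3\ldots$ with $x_j\in U_{j \bmod 4}$ (indices in $\{1,2,3,4\}$). Consecutive vertices use the cyclic pairs $(U_i,U_{i+1})$, and vertices at distance two use the opposite pairs $(U_1,U_3),(U_3,U_1),(U_2,U_4),(U_4,U_2)$. These are exactly the eight regular pairs of $C_4^2$, so a walk around $C_4^2$ in the reduced sense lifts to a $2$-path. The path starts with $x_1x_2=a_1a_2$ and ends with $b_1b_2$, where $b_1\in U_3,b_2\in U_4$. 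This is consistent with the residues, so the total length must be $\equiv 0\pmod 4$.

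\textbf{Cleaning and reservation.} First remove the atypical vertices. For each of the eight regular pairs, Lemma~\ref{lem:typical-vertices} and the Remark allow deleting at most $O(\varepsilon m)$ vertices from each $U_i$ (never $a_1,a_2,b_1,b_2$). Every remaining vertex then has out- and in-degree at least $(d-2\varepsilon)$ times the surviving part in the relevant neighbouring classes, and has a large common neighbourhood with its predecessor along the pattern. Since the pairs are regular, one can also ensure that for a typical arc $xy$ with $x\in U_i,y\in U_{i+1}$, the set $N^+(x)\cap N^+(y)\cap U_{i+2}$ has size at least $(d-\varepsilon)^2m/2$. This holds because a typical $y$ has many out-neighbours inside $N^+(x)\cap U_{i+2}$, which is a set of size $>\varepsilon m$. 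Next set aside, as reserved "ends", the given sets $N^+(a_1)\cap N^+(a_2)\cap U_3$ and $N^+(a_2)\cap U_4$, and on the other side $N^-(b_1)\cap U_1$ and $N^-(b_1)\cap N^-(b_2)\cap U_2$. These have sizes $\ge\eta^2 m$ and $\ge \eta m$, far above $\varepsilon m$.

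\textbf{Start, finish, and middle.} For the start, choose $x_3\in N^+(a_1)\cap N^+(a_2)\cap U_3$ that is typical with respect to $U_4$ and $N^+(a_2)\cap U_4$. Regularity of $(U_3,U_4)$ restricted to these sets, which are large, guarantees such an $x_3$. Then choose $x_4\in N^+(a_2)\cap N^+(x_3)\cap U_4$ typical for the next steps. After this, $a_1a_2x_3x_4$ is a $2$-path ending in a typical arc. Symmetrically, build the end $y_1y_2b_1b_2$ with $y_1\in N^-(b_1)\cap U_1$, $y_2\in N^-(b_1)\cap N^-(b_2)\cap U_2$ and $y_1y_2$ a typical arc. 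The middle is a greedy embedding, one vertex at a time, of the $2$-path pattern from $x_3x_4$ to a vertex of a reserved typical set feeding into $y_1y_2$. At each step the candidate set for the next vertex is the common out-neighbourhood of the last two vertices within the unused part of the right class. As long as every class retains at least $\eta m$ unused vertices, this set has size $\ge (d/2)^2\eta m\gg\varepsilon m$, so a vertex that is typical towards the next needed classes can always be picked. Continue until about $\eta m$ vertices remain in each class; the four classes are used at equal rates, so this leaves at most $4\eta m$ uncovered. Finally close up. Choose $y_1y_2$ inside the remaining typical sets so that $N^+$ of the current last arc hits them. Do this by restricting the greedy step just before the end to land in the $\eta$-sized sets $N^-(y_1)\cap N^-(y_2)$-compatible regions, which works because regularity gives a $2$-path of bounded length between two large sets in a regular $C_4^2$ blow-up.

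\textbf{Main obstacle.} The hard part is this closing step: joining the greedily built end arc to the prescribed ending $y_1y_2b_1b_2$ while only $\eta m$ vertices per class remain. I would handle it by fixing $y_1,y_2$ (hence the target sets $N^-(y_1)\cap N^-(y_2)$ and related sets) before running the greedy phase, and then keeping those sets protected throughout. The last two steps of the greedy process then choose vertices in $N^-(y_1)\cap U_3$ and $N^-(y_1)\cap N^-(y_2)\cap U_4$, which are guaranteed to be large by the typicality of $y_1y_2$. This requires the index alignment above, which the residue count guarantees, and the extra bookkeeping costs only $O(\varepsilon m)$ vertices, so the uncovered total stays below $4\eta m$.
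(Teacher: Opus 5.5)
Your architecture matches the paper's. You run a greedy square path through $U_3,U_4,U_1,U_2,\ldots$ that keeps a two-step invariant on the terminal arc; these are the paper's \emph{extendible} arcs, where $xy$ has a large common out-neighbourhood in $W_{i+2}$ and $y$ a large out-neighbourhood in $W_{i+3}$. You stop when linearly many vertices per class remain, use the classes equally, and close into small reserved sets at the $b_1b_2$ end.

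The gap is in the closing step, which you correctly flag as the crux but do not carry out. Fix $y_1y_2$ with $T_3:=N^-(y_1)\cap U_3$ and $T_4:=N^-(y_1)\cap N^-(y_2)\cap U_4$ large, and let $pq\in U_1\times U_2$ be the last greedy arc. You still need $w_3\in N^+(p)\cap N^+(q)\cap T_3$ and $w_4\in N^+(q)\cap N^+(w_3)\cap T_4$. The sizes of $T_3,T_4$ do not give this: regularity with density $d$ does not make two given linear-sized neighbourhoods intersect. If $p$ was chosen without reference to $T_3$, then $N^+(p)\cap T_3$ can be empty. So adjusting only the single step before the end (the choice of $q$) is already too late. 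Your remark that regularity gives a bounded $2$-path between two large sets is exactly the statement that needs proof here.

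The repair within your framework is short. Protect subsets $T_3'\subseteq T_3$ and $T_4'\subseteq T_4$ of size $\rho m$, with $\varepsilon\ll\rho\ll\eta$. In the final block:
\begin{itemize}
\item choose $p$ typical towards $T_3'$, using the pair $(U_1,U_3)$;
\item then choose $q$ typical towards $N^+(p)\cap T_3'$ and towards $T_4'$, using the pairs $(U_2,U_3)$ and $(U_2,U_4)$;
\item then choose $w_3\in N^+(p)\cap N^+(q)\cap T_3'$ typical towards $N^+(q)\cap T_4'$, after which $w_4$ exists.
\end{itemize}
Each extra condition removes at most $\varepsilon m$ candidates from a set of size $\gg\varepsilon m$.

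The paper closes differently and never fixes the penultimate arc. It reserves $X\subseteq N^-(b_1)\cap U_1$ and $Y\subseteq N^-(b_1)\cap N^-(b_2)\cap U_2$, each of size $\rho m$. It then proves a four-set connection lemma giving $z_1z_2xy$ with $z_1\in N^+(p)\cap N^+(q)$ and $z_2\in N^+(q)$. The key is choosing $z_2$ first, typical backwards to $Z_1$ and forwards to both $X$ and $Y$, so the two ends meet in the middle. Your version steers one-sidedly from a fixed end; both work.

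Two bookkeeping points in your write-up also need correcting.
\begin{itemize}
\item The sets you set aside or protect must be trimmed to size $\rho m$. The full neighbourhoods can be almost entire clusters, and reserving them would destroy the $4\eta m$ bound.
\item The cost of reservation is $O(\rho m)$, not $O(\varepsilon m)$, since reserved sets must be much larger than $\varepsilon m$ for regularity to act inside them. So the greedy should stop somewhat below $\eta m$ per class; the paper stops near $\eta m/2$.
\end{itemize}
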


\begin{proof}
Set $\kappa:=\eta^3/20$, $\lambda:=\eta^2/20$,
$\rho:=\eta^4/100$, and $\theta:=\eta/2$. By the hierarchy of
constants, we may assume that $3\varepsilon<\rho$,
$2\varepsilon<(d-\varepsilon)\rho$,
$(d-\varepsilon)\eta^2/2\geq\kappa$,
$(d-\varepsilon)\eta/2\geq\lambda$,
$(d-\varepsilon)\lambda\geq\kappa$, and
$(d-\varepsilon)\theta\geq\lambda$.

We shall repeatedly use the following immediate consequence of
Lemma~\ref{lem:typical-vertices}. Suppose that $Q$ is contained in
one cluster and $|Q|>r\varepsilon m$. If $S_1,\dots,S_r$ are subsets
of other clusters, each of size greater than $\varepsilon m$, and
the relevant ordered pairs are $\varepsilon$-regular of density at
least $d$, then one can choose a vertex of $Q$ that is typical with
respect to every $S_j$ in the required direction. Indeed, for each
$j\in[r]$, at most $\varepsilon m$ vertices of the cluster containing
$Q$ fail the corresponding degree bound.

\smallskip

We first show that, whenever $Z_1\subseteq U_3$, $Z_2\subseteq U_4$,
$X\subseteq U_1$, and $Y\subseteq U_2$ each have size at least
$\rho m$, there exist $z_1\in Z_1$, $z_2\in Z_2$, $x\in X$, and
$y\in Y$ such that $z_1z_2xy$ is a $2$-path.

Using the regular ordered pairs $(U_3,U_4)$, $(U_4,U_1)$, and
$(U_4,U_2)$, choose $z_2\in Z_2$ such that
$|N_H^-(z_2)\cap Z_1|\geq(d-\varepsilon)|Z_1|$,
$|N_H^+(z_2)\cap X|\geq(d-\varepsilon)|X|$, and
$|N_H^+(z_2)\cap Y|\geq(d-\varepsilon)|Y|$. Such a choice is
possible since $|Z_2|\geq\rho m>3\varepsilon m$.

Set $Z_1':=N_H^-(z_2)\cap Z_1$, $X':=N_H^+(z_2)\cap X$, and
$Y':=N_H^+(z_2)\cap Y$. Each of these sets has size at least
$(d-\varepsilon)\rho m>2\varepsilon m$. Using the regular ordered
pairs $(U_3,U_1)$ and $(U_1,U_2)$, choose $x\in X'$ such that
$|N_H^-(x)\cap Z_1'|\geq(d-\varepsilon)|Z_1'|$ and
$|N_H^+(x)\cap Y'|\geq(d-\varepsilon)|Y'|$. In particular,
$N_H^-(x)\cap Z_1'\neq\emptyset$ and
$N_H^+(x)\cap Y'\neq\emptyset$. We may therefore choose
$z_1\in N_H^-(x)\cap Z_1'$ and $y\in N_H^+(x)\cap Y'$.
Consequently, the arcs $z_1z_2$, $z_2x$, $xy$, $z_1x$, and $z_2y$
are present in $H$, and hence $z_1z_2xy$ is a $2$-path.

\smallskip

Choose sets $X\subseteq N_H^-(b_1)\cap U_1$ and
$Y\subseteq N_H^-(b_1)\cap N_H^-(b_2)\cap U_2$, each of size
$\rho m$ and avoiding the prescribed vertices. These choices are
possible by the assumed neighbourhood bounds and the hierarchy
$\rho\ll\eta^2$.

For each $i\in[4]$, let
$W_i:=U_i\setminus\bigl(X\cup Y\cup\{a_1,a_2,b_1,b_2\}\bigr)$.
Whenever a vertex is added to the path under construction, we remove
it from the corresponding set $W_i$. All subscripts below are taken
modulo $4$. An arc $xy$, where $x\in U_i$ and $y\in U_{i+1}$, is called
\emph{extendible} if
$|N_H^+(x)\cap N_H^+(y)\cap W_{i+2}|\geq\kappa m$ and
$|N_H^+(y)\cap W_{i+3}|\geq\lambda m$.

We first extend the prescribed initial arc $a_1a_2$. Let
$C_3:=N_H^+(a_1)\cap N_H^+(a_2)\cap W_3$ and
$C_4:=N_H^+(a_2)\cap W_4$. At this stage, only $b_1$ and $b_2$
have been excluded from $U_3$ and $U_4$, respectively. Hence, for
sufficiently large $m$, we have
$|C_3|\geq\eta^2m-1\geq\eta^2m/2$ and
$|C_4|\geq\eta m-1\geq\eta m/2$.

Using the regular ordered pairs $(U_3,U_4)$ and $(U_3,U_1)$, choose
$x_3\in C_3$ such that
$|N_H^+(x_3)\cap C_4|\geq(d-\varepsilon)|C_4|$ and
$|N_H^+(x_3)\cap W_1|\geq(d-\varepsilon)|W_1|$. This is possible
since $|C_3|>2\varepsilon m$. Add $x_3$ to the path and remove it
from $W_3$. Since $x_3\in C_3$, the sequence $a_1a_2x_3$ is a
$2$-path. Moreover,
$|N_H^+(a_2)\cap N_H^+(x_3)\cap W_4|
\geq(d-\varepsilon)|C_4|\geq\kappa m$. Also,
$|W_1|\geq(1-\rho)m-1\geq\theta m$, and hence
$|N_H^+(x_3)\cap W_1|\geq(d-\varepsilon)|W_1|\geq\lambda m$.
Thus the current terminal arc $a_2x_3$ is extendible.

We next show that an extendible terminal arc can be extended by one
vertex while preserving extendibility. Suppose that the path
currently ends with an extendible arc $xy$, where $x\in U_i$ and
$y\in U_{i+1}$, and suppose that $|W_j|\geq\theta m$ for every
$j\in[4]$. Set
$C:=N_H^+(x)\cap N_H^+(y)\cap W_{i+2}$ and
$R:=N_H^+(y)\cap W_{i+3}$. By extendibility,
$|C|\geq\kappa m>2\varepsilon m$ and
$|R|\geq\lambda m>\varepsilon m$.

Using the regular ordered pairs $(U_{i+2},U_{i+3})$ and
$(U_{i+2},U_i)$, choose $z\in C$ such that
$|N_H^+(z)\cap R|\geq(d-\varepsilon)|R|$ and
$|N_H^+(z)\cap W_i|\geq(d-\varepsilon)|W_i|$. Add $z$ to the path
and remove it from $W_{i+2}$. Since
$z\in N_H^+(x)\cap N_H^+(y)$, the sequence $xyz$ is a $2$-path.
Furthermore,
$|N_H^+(y)\cap N_H^+(z)\cap W_{i+3}|
\geq(d-\varepsilon)\lambda m\geq\kappa m$, while
$|N_H^+(z)\cap W_i|\geq(d-\varepsilon)\theta m\geq\lambda m$.
Hence the new terminal arc $yz$ is extendible.

Starting from $a_2x_3$, apply the preceding extension step three
times, adding vertices successively from $U_4$, $U_1$, and $U_2$.
Together with $x_3$, these extensions use exactly one vertex from
each set $W_i$. The path now ends with an extendible arc $pq$, where
$p\in U_1$ and $q\in U_2$.

We continue in blocks of four consecutive extensions. Each block
uses exactly one vertex from each $W_i$ and again leaves an
extendible terminal arc in $U_1\times U_2$. A new block is initiated
whenever $|W_i|\geq\theta m+1$ for every $i\in[4]$. Throughout such
a block, every available set has size at least $\theta m$, so all
four extension steps are valid.

Terminate the procedure when no further block can be initiated, and
let $pq\in U_1\times U_2$ be the final terminal arc. After every
completed block, each $W_i$ has size at least $\theta m$, whereas
the stopping condition gives an index $j\in[4]$ such that
$|W_j|<\theta m+1$.

Initially, the sizes of the four sets $W_i$ differ by at most
$\rho m+4$. Both the initial four-cluster extension and every
subsequent block use the same number of vertices from each cluster.
These differences therefore remain at most $\rho m+4$ throughout
the construction. Consequently,
$\theta m\leq|W_i|<\theta m+\rho m+5$ for every $i\in[4]$.

Since $pq$ is extendible, the sets
$Z_1:=N_H^+(p)\cap N_H^+(q)\cap W_3$ and
$Z_2:=N_H^+(q)\cap W_4$ satisfy $|Z_1|\geq\kappa m$ and
$|Z_2|\geq\lambda m$. In particular, both sets have size at least
$\rho m$. Applying the four-cluster connection to $Z_1,Z_2,X,Y$,
we obtain vertices $z_1\in Z_1$, $z_2\in Z_2$, $x\in X$, and
$y\in Y$ such that $z_1z_2xy$ is a $2$-path.

By the definitions of $Z_1$ and $Z_2$, the sequence $pqz_1z_2xy$
is a $2$-path. Moreover, the definitions of $X$ and $Y$ yield
$xb_1,yb_1,yb_2\in A(H)$. Together with $b_1b_2\in A(H)$, this
shows that $pqz_1z_2xyb_1b_2$ is a $2$-path. Concatenating this
segment with the path already constructed gives a $2$-path $P$
from $a_1a_2$ to $b_1b_2$.

For $i\in\{3,4\}$, the uncovered vertices of $U_i$ are contained
in $W_i$, and hence there are at most $\theta m+\rho m+5$ such
vertices. For $i\in\{1,2\}$, we must also account for the unused
vertices of the corresponding reserved set, giving at most
$\theta m+2\rho m+5$ uncovered vertices. Since
$\theta=\eta/2$, $\rho=\eta^4/100$, and $m$ is sufficiently large,
both quantities are smaller than $\eta m$. Thus $P$ leaves fewer
than $\eta m$ vertices uncovered in each cluster, and therefore at
most $4\eta m$ vertices uncovered in total.
\end{proof}
Finally, we record an orientation lemma.

\begin{lemma}\label{lemmq4.10}
Let $R$ be a digraph, and let $\Gamma$ be the standard multigraph
on $V(R)$ defined by
\[
\mu_\Gamma(XY):=
\mathbf 1_{\{XY\in A(R)\}}+
\mathbf 1_{\{YX\in A(R)\}}
\]
for all distinct $X,Y\in V(R)$. Let $K$ be a submultigraph of $\Gamma$ with $K\in\overline{\mathcal K}_4$, with vertex set
$\{W_1,W_2,W_3,W_4\}$. Then these vertices can be ordered as
$X_1,X_2,X_3,X_4$ so that $R$ contains every arc of $C_4^2$
in this order; that is, $X_iX_{i+1}\in A(R)$ for every $i\in[4]$, with indices taken modulo $4$, and $X_1X_3, X_3X_1, X_2X_4, X_4X_2\in A(R).$
\end{lemma}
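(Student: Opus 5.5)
The plan is a short case analysis on the structure of $K$. By definition of $\overline{\mathcal K}_4$, every pair of distinct vertices of $K$ has multiplicity one or two in $K$, and the light pairs form a matching. Since $K\subseteq\Gamma$ and $\mu_\Gamma(XY)$ counts arcs of $R$ between $X$ and $Y$, each heavy pair of $K$ corresponds to a digon in $R$. Each light pair corresponds to at least one arc of $R$, in one of the two directions. Because the light pairs form a matching on four vertices, there are at most two of them, and they are disjoint. The key observation is that $C_4^2$ needs digons only on the two diagonal pairs $\{X_1,X_3\}$ and $\{X_2,X_4\}$. The four cyclic pairs need only one arc each, oriented consistently around the cycle $X_1X_2X_3X_4$.

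\textbf{Choosing the diagonals.} We first pick two disjoint pairs of $\{W_1,W_2,W_3,W_4\}$ that are both heavy. Since the light pairs form a matching $M$ with $|M|\le 2$, choose a perfect matching $\{\{P,Q\},\{S,T\}\}$ of the four vertices that avoids every light pair. This is possible because the three perfect matchings of $K_4$ are pairwise edge-disjoint. Each light pair lies in exactly one perfect matching, and since the light pairs are disjoint, they all lie in the \emph{same} perfect matching. At least two perfect matchings therefore consist entirely of heavy pairs. Declare $\{X_1,X_3\}:=\{P,Q\}$ and $\{X_2,X_4\}:=\{S,T\}$; the diagonal digons are then in $R$.

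\textbf{Orienting the cycle.} The four remaining pairs are $\{P,S\},\{S,Q\},\{Q,T\},\{T,P\}$, forming a $4$-cycle $PSQT$. They contain all light pairs, at most two of them, and these are disjoint, so they are opposite edges of this $4$-cycle. We need one of the two cyclic orientations, $P\to S\to Q\to T\to P$ or its reverse, to have all four arcs in $R$. Heavy pairs support both directions, so only the light pairs constrain us.

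If there is at most one light pair, orient the cycle to agree with its arc. If there are two light pairs, they are opposite, say $\{P,S\}$ and $\{Q,T\}$. This is the step requiring care: their arcs may or may not be compatible with one cyclic orientation. If the arcs are $PS$ and $QT$, or $SP$ and $TQ$, we are done. If they are incompatible, say $PS$ and $TQ$, we instead use the alternative choice of diagonals. Some second perfect matching, $\{\{P,T\},\{S,Q\}\}$, also consists of heavy pairs, and we take the cycle on the pairs $\{P,S\},\{S,T\},\{T,Q\},\{Q,P\}$. This can be oriented as $P\to S\to T\to Q\to P$, which uses $PS$ and $TQ$ and otherwise only heavy pairs. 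Finally, label $X_1,\dots,X_4$ along the chosen orientation, with the chosen diagonals as $\{X_1,X_3\}$ and $\{X_2,X_4\}$, and check that all eight required arcs lie in $A(R)$.
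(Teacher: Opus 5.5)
Your proof is correct and is essentially the paper's argument: a case split on the number of light pairs, with the light pairs placed on cycle edges and the diagonals taken heavy. The paper skips your compatibility detour. Given light arcs $X\to Y$ and $Z\to W$, it orders the vertices directly as $X,Y,Z,W$, and the matching property then makes $YZ$, $WX$ and both diagonals heavy automatically.
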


\begin{proof}
The light edges of $K$ form a matching of size zero, one, or two,
and every other pair is heavy. If there is no light edge, any ordering
works. If there is one light edge, say $X\to Y$, put $X_1=X$,
$X_2=Y$, and let $X_3,X_4$ be the remaining vertices. If there
are two light edges, say $X\to Y$ and $Z\to W$, order the
vertices as $X,Y,Z,W$. In every case, the light edges occur as
cycle edges and all other required pairs are heavy.
\end{proof}
\subsection{Common setup for the extremal theorem}\label{subsection4.2}

Choose constants and an integer $t_0$ satisfying
\[
0<\frac1{t_0}\ll\varepsilon\ll d\ll\eta_0\ll\gamma,
\qquad
\gamma^{1/4}\ll\beta\ll\omega\ll\xi<\frac15.
\]
Let $T=T(\varepsilon,t_0)$ be the bound supplied by
Lemma~\ref{regular}, and then choose $n_0$ sufficiently large that
$1/n_0\ll1/T$. As usual, the hierarchy is taken sufficiently strong
to imply all numerical inequalities used below. Throughout the
remainder of this section, let $D$ be an $n$-vertex digraph satisfying
the assumptions of Theorem~\ref{acycle}, where $n\geq n_0$.

\medskip

\noindent\textbf{Step 1: Cleaning the extremal set.}

\smallskip

Since $D$ is $(1/5,\gamma)$-extremal, there exists a set
$S_0\subseteq V(D)$ such that
$\bigl||S_0|-n/5\bigr|<\gamma n$ and
$e(D[S_0])<\gamma n^2$. By discarding vertices if necessary, choose
a set $S\subseteq S_0$ with
$|S|=\lfloor(1/5-\gamma)n\rfloor$, and put
\[
D^*:=D-S
\quad\text{and}\quad
n^*:=|V(D^*)|=n-|S|=(4/5+\gamma)n+O(1).
\]

We first remove from $S$ the vertices having atypically large degree
inside $S$. Define
\[
S':=\{v\in S:d_S(v)\geq\gamma^{1/2}n\}
\quad\text{and}\quad
A:=S\setminus S'.
\]
Since $\sum_{v\in S}d_S(v)=2e(D[S])<2\gamma n^2$, we have
$|S'|\leq2\gamma^{1/2}n$. Hence, for sufficiently large $n$,
\begin{equation}\label{4.1}
|A|\geq(1/5-3\gamma^{1/2})n.
\end{equation}

The set $A$ has the additional advantage that each of its vertices
has almost full indegree and outdegree in $D^*$. Indeed, for every
$a\in A$, we have
$d_D(a,V(D^*))\geq\tau(n)-\gamma^{1/2}n$. Since each of
$d_D^+(a,V(D^*))$ and $d_D^-(a,V(D^*))$ is at most $n^*$, it follows
that each is at least
$\tau(n)-\gamma^{1/2}n-n^*$. Using
$\tau(n)\geq8n/5-2$ and
$n^*\leq(4/5+\gamma)n+1$, we obtain, for sufficiently large $n$,
\begin{equation}\label{4.2}
d_D^+(a,V(D^*)),d_D^-(a,V(D^*))
\geq n^*-2\gamma^{1/2}n
\quad\text{for every }a\in A.
\end{equation}
Thus $A$ is large, and every vertex of $A$ has almost full
indegree and outdegree in $D^*$.

\medskip

\noindent\textbf{Step 2: Establishing the required degree structure.}

\smallskip

Removing $S$ decreases the total degree of any vertex by at most
$2|S|$. Consequently,
$\delta(D^*)\geq\delta(D)-2|S|
\geq(6/5+2\gamma)n-2$. Since
$n^*=(4/5+\gamma)n+O(1)$, for sufficiently large $n$ this gives
\begin{equation}\label{4.3}
\delta(D^*)\geq(3/2+\gamma/2)n^*.
\end{equation}
As each one-sided degree in $D^*$ is at most $n^*$, we immediately
deduce that
\begin{equation}\label{4.4}
d_{D^*}^+(v),d_{D^*}^-(v)
\geq(1/2+\gamma/2)n^*
\quad\text{for every }v\in V(D^*).
\end{equation}

We next show that almost every vertex of $D^*$ has large degree in
both directions into $S$. Summing \eqref{4.2} over $A$ and using
$|S\setminus A|=|S'|\leq2\gamma^{1/2}n$, we obtain
\begin{equation}\label{4.5}
\begin{aligned}
e_D(V(D^*),S)
&\geq\sum_{a\in A}d_D^-(a,V(D^*))
 \geq n^*|S|-3\gamma^{1/2}n^2,\\
e_D(S,V(D^*))
&\geq\sum_{a\in A}d_D^+(a,V(D^*))
 \geq n^*|S|-3\gamma^{1/2}n^2.
\end{aligned}
\end{equation}
Furthermore, we define that $U^\prime_+:=\{u\in V(D^*):d_S^+(u)<(1-\gamma^{1/4})|S|\},$ $U^\prime_-:=\{u\in V(D^*):d_S^-(u)<(1-\gamma^{1/4})|S|\}$ and $U^\prime:=U^\prime_+\cup U^\prime_-.$ Clearly, each vertex of $U'_+$ is missing more than
$\gamma^{1/4}|S|$ possible arcs from that vertex to $S$, whereas
the first inequality in \eqref{4.5} shows that the total number of
such missing arcs is at most $3\gamma^{1/2}n^2$. Since
$|S|\geq n/6$ for sufficiently large $n$, it follows that
$|U'_+|\leq18\gamma^{1/4}n$. Applying the same argument to the
second inequality in \eqref{4.5} gives
$|U'_-|\leq18\gamma^{1/4}n$, and hence
$|U'|\leq36\gamma^{1/4}n$.

Moreover, every $v\in V(D^*)\setminus U'$ satisfies
$d_A^+(v)\geq d_S^+(v)-|S'|
\geq |A|-\gamma^{1/4}|S|$. The same argument gives the corresponding
lower bound for $d_A^-(v)$. Thus, after removing the small exceptional
set $U'$, every remaining vertex of $D^*$ has almost full indegree
and outdegree into $A$.

\medskip

\noindent\textbf{Step 3: Developing the insertion and connection tools.}

\smallskip

We now establish two auxiliary tools that will be used in the
subsequent construction of the desired $2$-cycle. The first allows
several vertices to be inserted into an existing $2$-cycle without
interference, while the second provides a short $2$-path between
two prescribed arcs in the stable case.

Let $C_0$ be a $2$-cycle or a $2$-path, and let
$y_1,y_2,y_3,y_4$ be four consecutive vertices of $C_0$. For a
vertex $x\notin V(C_0)$, we say that $x$ can be \emph{inserted} at
the arc $y_2y_3$ if $y_1x,y_2x,xy_3,xy_4\in A(D)$. Replacing
$y_2y_3$ by $y_2xy_3$ then preserves the $2$-path or $2$-cycle
structure. Let $\operatorname{Pos}(x)$ denote the set of arcs of
$C_0$ at which $x$ can be inserted.

\begin{claim}\label{claim4.11}
Let $C_0$ be a $2$-cycle in $D$, and let
$x_1,\ldots,x_q\notin V(C_0)$ be distinct. If
$|\operatorname{Pos}(x_j)|\geq3(j-1)+1$ for every $j\in[q]$, then
all the vertices $x_1,\ldots,x_q$ can be inserted into $C_0$,
producing a $2$-cycle on
$V(C_0)\cup\{x_1,\ldots,x_q\}$.
\end{claim}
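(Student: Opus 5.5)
We insert the vertices greedily in reverse order, $x_q$ first and $x_1$ last, always at an arc of the \emph{original} cycle $C_0$. The hypothesis $|\operatorname{Pos}(x_j)|\geq3(j-1)+1$ is exactly the room needed if each earlier choice rules out at most three arcs of $C_0$ for every later vertex.

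Write $C_0=y_1y_2\cdots y_\ell y_1$ with indices modulo $\ell$. Call two arcs $y_iy_{i+1}$ and $y_ky_{k+1}$ of $C_0$ \emph{well separated} if $|i-k|\geq2$ cyclically, meaning they share no vertex. The key observation concerns well-separated positions. Suppose $x$ is inserted at $y_iy_{i+1}$ and $x'$ at $y_ky_{k+1}$, each using only the four-vertex windows $y_{i-1},y_i,y_{i+1},y_{i+2}$ and $y_{k-1},\dots,y_{k+2}$ of $C_0$. If the two arcs are well separated, the result is again a $2$-cycle.

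To verify this, list the pairs at distance at most $2$ in the new cyclic order. After inserting $x$ between $y_i$ and $y_{i+1}$, the new pairs that must be arcs are $y_{i-1}x$, $y_ix$, $xy_{i+1}$ and $xy_{i+2}$; these are exactly the insertion conditions. The pair $y_iy_{i+1}$ is already an arc of $C_0$, and $y_{i-1}y_i$, $y_{i+1}y_{i+2}$ remain arcs. The pairs $(y_{i-1},y_{i+1})$ and $(y_i,y_{i+2})$ are now at distance $3$ and impose no condition. A second insertion at $y_ky_{k+1}$ with $|i-k|\geq2$ has its own distance-$\le 2$ pairs. The only way these could involve $x$ is if $x$ lies within distance $2$ of the new vertex $x'$. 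That happens only when no original vertex separates them, i.e.\ when the arcs share a vertex ($k=i\pm1$) or coincide. So for well-separated arcs the conditions for $x'$ are still exactly its original insertion conditions in $C_0$, and similarly for $x$.

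Now choose positions greedily from $j=q$ down to $j=1$? Counting shows this order is wrong, so instead proceed from $j=1$ upward. Choose any $e_1\in\operatorname{Pos}(x_1)$. Having chosen $e_1,\dots,e_{j-1}$, each $e_h$ forbids at most three arcs of $C_0$: itself and its two neighbouring arcs. Hence at most $3(j-1)$ arcs are forbidden, and $|\operatorname{Pos}(x_j)|\geq3(j-1)+1$ leaves an admissible $e_j$ that is well separated from all earlier choices. This yields pairwise distinct, pairwise well-separated arcs $e_1,\dots,e_q$ with $e_j\in\operatorname{Pos}(x_j)$.

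Finally, perform all $q$ insertions simultaneously. By the observation above, every pair at distance at most $2$ in the resulting cyclic order falls into one of three cases. It is a pair of $C_0$ at distance at most $2$ in $C_0$, hence an arc; or it is one of the four insertion arcs at some $e_j$; or it is a pair of original vertices whose distance has increased, which imposes nothing. Pairs involving two inserted vertices never occur at distance $\le 2$, since well-separated arcs have at least one untouched original vertex strictly between them. Thus we obtain a $2$-cycle on $V(C_0)\cup\{x_1,\dots,x_q\}$.

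The main obstacle is the careful check that simultaneous insertions do not interfere. This is where the separation requirement must be justified, and it is why each chosen arc blocks three arcs rather than one. A small-$\ell$ degenerate case, where $\ell$ is tiny compared with $3q$, cannot occur, because then the hypothesis on $|\operatorname{Pos}(x_q)|$ would exceed $\ell$.
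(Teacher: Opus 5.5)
Your final argument is correct and is essentially the paper's proof: arcs are chosen for $x_1,\dots,x_q$ in that order, each chosen arc of $C_0$ excluding itself and its two neighbours, so at most $3(j-1)$ arcs are excluded when $x_j$ is placed. The paper inserts one vertex at a time and notes that every non-excluded arc keeps its original four-vertex window, while you insert all vertices at once and check the pairs at distance at most $2$ directly; this amounts to the same thing. Two fixes: delete the opening ``reverse order'' sentence, which you later correct yourself. Also, the reason two inserted vertices are never within distance $2$ is that vertex-disjoint arcs leave at least \emph{two} original vertices between them; a single separating vertex, as when the arcs share an endpoint, would give distance exactly $2$.
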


\begin{proof}
\renewcommand*{\qedsymbol}{$\blacksquare$}
All insertion positions are considered with respect to the original
cycle $C_0$. Insert $x_1,\ldots,x_q$ successively in this order.
Whenever a vertex is inserted at an original arc $e$, declare $e$
and the two original arcs adjacent to $e$ unavailable for all
subsequent insertions.

An insertion at $e$ changes the four-vertex configurations
associated only with these three original arcs. Consequently, every
original arc not declared unavailable retains the same four
consecutive original vertices in the required cyclic order. Before
$x_j$ is inserted, at most $3(j-1)$ original arcs have therefore
been declared unavailable.

Since $|\operatorname{Pos}(x_j)|\geq3(j-1)+1$, there is an
available arc $e_j\in\operatorname{Pos}(x_j)$. The four original
vertices certifying that $e_j\in\operatorname{Pos}(x_j)$ remain
consecutive in the required order, so the same four arcs still
permit the insertion of $x_j$. Proceeding inductively proves the
claim.
\end{proof}

For later reference, we collect the estimates obtained so far.
\begin{equation}\label{4.7}
\begin{gathered}
|S'|\leq2\gamma^{1/2}n,\quad
|A|\geq(1/5-3\gamma^{1/2})n,\quad
|U'|\leq36\gamma^{1/4}n,\quad
\delta^0(D^*)\geq(1/2+\gamma/2)n^*,\\
d_D^+(a,V(D^*)),d_D^-(a,V(D^*))
\geq n^*-2\gamma^{1/2}n
\quad\text{for every }a\in A,\\
d_A^+(v),d_A^-(v)
\geq |A|-\gamma^{1/4}|S|
\quad\text{for every }v\in V(D^*)\setminus U'.
\end{gathered}
\end{equation}

The next lemma uses these estimates to join two prescribed arcs by
a short $2$-path whose internal vertices consist of two vertices of
$A$ and an arc of $D^*$.
\begin{figure}[H]
\centering

\begin{minipage}[c]{0.82\linewidth}
\centering

\resizebox{\linewidth}{!}{%
\begin{tikzpicture}[
x=1cm,
y=1cm,
line cap=round,
line join=round
]

\node[aset] (A) at (0,3.08) {};

\node[font=\normalsize,anchor=south]
at ($(A.north)+(0,0.06)$) {$A$};

\node[bridgepoint] (ai) at (-0.37,2.93) {};
\node[bridgepoint] (bi) at ( 0.37,2.93) {};

\node[bridgepoint] (zi) at (-0.48,1.48) {};
\node[bridgepoint] (wi) at ( 0.48,1.48) {};

\begin{scope}[xshift=0.90cm]

\foreach \i/\x in {
1/-6.05,
2/-4.70,
3/-3.35,
4/-2.00
}{
\node[cluster] (L\i) at (\x,0) {};
}

\draw[blackarrowline] (L1) -- (L2);
\draw[blackline] (L2) -- (L3);
\draw[blackarrowline] (L3) -- (L4);

\draw[blackline]
(L1.-58)
.. controls (-5.35,-0.86) and (-3.95,-0.90) ..
(L3.-112);

\draw[blackline]
(L1.-65)
.. controls (-5.20,-1.40) and (-3.10,-1.42) ..
(L4.-100);

\draw[blackline]
(L2.-70)
.. controls (-4.28,-0.88) and (-3.18,-0.92) ..
(L4.-108);

\end{scope}

\begin{scope}[xshift=-0.90cm]

\foreach \i/\x in {
1/2.00,
2/3.35,
3/4.70,
4/6.05
}{
\node[cluster] (R\i) at (\x,0) {};
}

\draw[blackarrowline] (R1) -- (R2);
\draw[blackline] (R2) -- (R3);
\draw[blackarrowline] (R3) -- (R4);

\draw[blackline]
(R1.-58)
.. controls (2.70,-0.90) and (4.10,-0.86) ..
(R3.-112);

\draw[blackline]
(R1.-65)
.. controls (3.10,-1.42) and (5.20,-1.40) ..
(R4.-100);

\draw[blackline]
(R2.-70)
.. controls (4.18,-0.92) and (5.28,-0.88) ..
(R4.-108);

\end{scope}

\node[font=\small,anchor=south]
at ($(L1.north)+(0.05,0.02)$) {$V_1^i$};

\node[font=\small,anchor=south]
at ($(L2.north)+(0.03,0.02)$) {$V_2^i$};

\node[font=\small,anchor=south]
at ($(L3.north)+(0.035,0.02)$) {$V_3^i$};

\node[font=\small,anchor=south]
at ($(L4.north)+(-0.10,0.02)$) {$V_4^i$};

\node[font=\small,anchor=south]
at ($(R1.north)+(0.18,0.02)$) {$V_1^{i+1}$};

\node[font=\small,anchor=south]
at ($(R2.north)+(0.10,0.02)$) {$V_2^{i+1}$};

\node[font=\small,anchor=south]
at ($(R3.north)+(0.05,0.02)$) {$V_3^{i+1}$};

\node[font=\small,anchor=south]
at ($(R4.north)+(0.07,0.02)$) {$V_4^{i+1}$};

\coordinate (pL3) at (L3.center);
\coordinate (pL4) at (L4.center);
\coordinate (pR1) at (R1.center);
\coordinate (pR2) at (R2.center);

\draw[redline] (pL3) -- (ai);
\draw[redline] (pL4) -- (ai);
\draw[redline] (pL4) -- (zi);

\draw[redline] (ai) -- (zi);
\draw[redline] (ai) -- (wi);

\draw[
blue,
line width=1.15pt,
shorten >=0pt,
shorten <=0pt,
postaction={decorate},
decoration={
markings,
mark=at position 0.55 with {\arrow{stealth}}
}
] (zi) -- (wi);

\draw[redline] (zi) -- (bi);
\draw[redline] (wi) -- (bi);
\draw[redline] (wi) -- (pR1);

\draw[redline] (bi) -- (pR1);
\draw[redline] (bi) -- (pR2);

\node[contactdot] at (pL3) {};
\node[contactdot] at (pL4) {};
\node[contactdot] at (pR1) {};
\node[contactdot] at (pR2) {};

\node[font=\scriptsize]
at ($(L3.center)+(0,-0.23)$) {$x_3^i$};

\node[font=\scriptsize]
at ($(L4.center)+(0,-0.23)$) {$x_4^i$};

\node[font=\small,anchor=south east]
at ($(ai)+(0,-0.10)$) {$a_i$};

\node[font=\small,anchor=south west]
at ($(bi)+(0,-0.10)$) {$b_i$};

\node[font=\small,anchor=east]
at ($(zi)+(0.46,-0.28)$) {$z_i$};

\node[font=\small,anchor=west]
at ($(wi)+(-0.46,-0.28)$) {$w_i$};

\node[font=\scriptsize]
at ($(R1.center)+(0.05,-0.23)$) {$x_1^{i+1}$};

\node[font=\scriptsize]
at ($(R2.center)+(0.05,-0.23)$) {$x_2^{i+1}$};

\node[font=\normalsize] at (-3.10,-1.68) {$K^i$};
\node[font=\normalsize] at ( 3.10,-1.68) {$K^{i+1}$};

\end{tikzpicture}%
}

\par\smallskip
{\small\textnormal{(a)} The $\beta$-stable case.}

\end{minipage}

\par\vspace{1.1em}


\begin{minipage}[c]{0.5\linewidth}
\centering

\resizebox{\linewidth}{!}{%
\begin{tikzpicture}[
x=1cm,
y=1cm,
line cap=round,
line join=round
]

\node[setcircle] (L) at (-2.75,1.30) {};
\node[setcircle] (R) at ( 2.75,1.30) {};
\node[setcircleA] (A) at ( 0.00,3.85) {};

\node[
font=\large,
anchor=south
]
at ($(L.north)+(0,0.12)$) {$L_0$};

\node[
font=\large,
anchor=south
]
at ($(R.north)+(0,0.12)$) {$R_0$};

\node[
font=\large,
anchor=south
]
at ($(A.north)+(0,0.12)$) {$A$};

\manualstealtharc{L}{0.37}{240}{490}{515}{-17}
\manualstealtharc{R}{0.37}{240}{490}{515}{-17}

\draw[mainarrow]
(R.west) -- (L.east);

\draw[mainarrow]
($(L.north east)+(0.10,-0.11)$) --
($(A.south west)+(0.10,-0.11)$);

\draw[mainarrow]
($(A.south west)+(-0.10,0.11)$) --
($(L.north east)+(-0.10,0.11)$);

\draw[mainarrow]
($(A.south east)+(-0.10,-0.11)$) --
($(R.north west)+(-0.10,-0.11)$);

\draw[mainarrow]
($(R.north west)+(0.10,0.11)$) --
($(A.south east)+(0.10,0.11)$);

\end{tikzpicture}%
}

\par\smallskip
{\small\textnormal{(b)} The non-$\beta$-stable case.}

\end{minipage}

\caption{Schematic structures in the two cases of the extremal
argument. In \textnormal{(a)}, the highlighted arcs form the
connecting segment joining the $2$-paths associated with two
consecutive tiles $K^i$ and $K^{i+1}$. In \textnormal{(b)}, the
subdigraphs $D[L_0]$ and $D[R_0]$ are $10\omega$-almost complete,
the ordered pair $(R_0,L_0)$ is
$2\omega^{1/4}$-almost one-way complete, while $(L_0,R_0)$ is
$\beta^2$-sparse. Moreover, the pairs between $A$ and each of
$L_0$ and $R_0$ are $\gamma^{1/4}$-almost complete in both
directions.}
\label{fig:extremal-two-cases}
\end{figure}
\begin{lemma}\label{lemma12}
Suppose that $D^*$ is $\beta$-stable. Let $pq$ and $rs$ be two
disjoint arcs of $D^*-U'$, and let
$F\subseteq\bigl(V(D^*)\cup A\bigr)\setminus\{p,q,r,s\}$ satisfy
$|F|\leq8T$. Then there exist distinct vertices
$a,b\in A\setminus F$ and
$z,w\in V(D^*)\setminus
\bigl(U'\cup F\cup\{p,q,r,s\}\bigr)$ such that $pqazwbrs$ is a
$2$-path.
\end{lemma}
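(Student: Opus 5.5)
The plan is to choose the two vertices of $A$ first, using the fact that $p,q,r,s\notin U'$, and then to find the arc $zw$ in $D^*$ by applying $\beta$-stability to two large sets. First I list the arcs the $2$-path $pqazwbrs$ needs. Besides $pq$ and $rs$, which are given, these are
\[
pa,\ qa,\qquad qz,\ az,\qquad aw,\ zw,\qquad zb,\ wb,\qquad wr,\ br,\qquad bs.
\]
So the conditions split into three groups:
\begin{itemize}[nosep]
\item conditions on $a$ alone: $a\in N^+(p)\cap N^+(q)\cap A$;
\item conditions on $b$ alone: $b\in N^-(r)\cap N^-(s)\cap A$;
\item conditions on $z,w$: $z\in N^+(q)\cap N^+(a)\cap N^-(b)$, $w\in N^+(a)\cap N^-(b)\cap N^-(r)$, and $zw\in A(D)$.
\end{itemize}

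\emph{Step 1: choosing $a$ and $b$.} Since $p,q\notin U'$, \eqref{4.7} gives $d_A^+(p),d_A^+(q)\ge |A|-\gamma^{1/4}|S|$. Hence
\[
|N_A^+(p)\cap N_A^+(q)|\ge |A|-2\gamma^{1/4}|S|.
\]
This is at least $n/6$, so we can pick $a$ in this set outside $F$; recall $|F|\le 8T$, which is a constant. Symmetrically, using $r,s\notin U'$, we pick $b\in N_A^-(r)\cap N_A^-(s)$ with $b\notin F\cup\{a\}$. No further property of $a$ or $b$ is needed: every vertex of $A$ satisfies \eqref{4.2}, so $d^+(a,V(D^*))$ and $d^-(b,V(D^*))$ are both at least $n^*-2\gamma^{1/2}n$.

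\emph{Step 2: the two target sets.} Put $X:=U'\cup F\cup\{p,q,r,s\}$ and define
\[
Z:=\bigl(N^+_{D^*}(q)\cap N^+(a)\cap N^-(b)\bigr)\setminus X,\qquad
W:=\bigl(N^-_{D^*}(r)\cap N^+(a)\cap N^-(b)\bigr)\setminus X .
\]
By \eqref{4.4}, $d^+_{D^*}(q)\ge(1/2+\gamma/2)n^*$. Intersecting with $N^+(a)\cap N^-(b)$ removes at most $4\gamma^{1/2}n$ vertices. Removing $X$ costs at most $36\gamma^{1/4}n+8T+4$ vertices. Since $n^*\ge 4n/5$ and $\gamma^{1/4}\ll\beta$, this gives $|Z|\ge (1/2-\beta)n^*$, and likewise $|W|\ge(1/2-\beta)n^*$.

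\emph{Step 3: the arc $zw$.} Because $D^*$ is $\beta$-stable (Definition~\ref{stable}), applying the definition with $U_1=Z$ and $U_2=W$ gives $e_{D^*}(Z,W)>(\beta n^*)^2>0$. The sets $Z$ and $W$ need not be disjoint, but $D$ is loopless, so any arc $zw$ with $z\in Z$ and $w\in W$ has $z\ne w$. Then $z,w\in V(D^*)$ are distinct from $a,b\in A$ and lie outside $U'\cup F\cup\{p,q,r,s\}$. All the required arcs now hold by construction, and $pqazwbrs$ is the desired $2$-path.

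The only delicate point is the counting in Step 2. We must check that the total loss — from $U'$, from the non-neighbours of $a$ and $b$ in $D^*$, and from $F$ — stays below the slack $(\gamma/2+\beta)n^*$ above $n^*/2$. This holds because $|U'|=O(\gamma^{1/4}n)$, the hierarchy has $\gamma^{1/4}\ll\beta$, and $T$ is a constant while $n$ is large.
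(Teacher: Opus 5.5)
Your proof is correct and follows essentially the same route as the paper: choose $a\in N_A^+(p)\cap N_A^+(q)\setminus F$, then apply $\beta$-stability to two sets of size at least $(1/2-\beta)n^*$, cut out of $N^+_{D^*}(q)$ and $N^-_{D^*}(r)$ by the out-neighbourhood of $a$, to obtain the arc $zw$. The only difference is the timing of $b$: the paper chooses $b$ last, from $N_A^+(z)\cap N_A^+(w)\cap N_A^-(r)\cap N_A^-(s)\setminus(F\cup\{a\})$, using $z,w\notin U'$ and \eqref{4.7}; you fix $b$ in advance and build the conditions $zb,wb$ into the target sets via \eqref{4.2}, which costs only a negligible extra $2\gamma^{1/2}n$ vertices.
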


\begin{proof}
Since $p,q\notin U'$, the inequality \eqref{4.7} gives
$|N_A^+(p)\cap N_A^+(q)\setminus F|
\geq |A|-2\gamma^{1/4}|S|-8T>0$. Choose
$a\in N_A^+(p)\cap N_A^+(q)\setminus F$.

For this choice of $a$, define
\[
P:=N_{D^*}^+(q)\cap N_D^+(a,V(D^*))
\setminus\bigl(U'\cup F\cup\{p,q,r,s\}\bigr),\ \mbox{and}
\]
\[
Q:=N_{D^*}^-(r)\cap N_D^+(a,V(D^*))
\setminus\bigl(U'\cup F\cup\{p,q,r,s\}\bigr).
\]
Every vertex $z\in P$ receives arcs from both $q$ and $a$, whereas
every vertex $w\in Q$ receives an arc from $a$ and sends an arc to
$r$. It therefore remains to find an arc directed from $P$ to $Q$.

By \eqref{4.7}, we get that
\[
\begin{aligned}
|P|
\geq d_{D^*}^+(q)+d_D^+(a,V(D^*))-n^*
      -|U'|-|F|-4
\geq
\left(\frac12+\frac\gamma2\right)n^*
-2\gamma^{1/2}n-36\gamma^{1/4}n-8T-4.
\end{aligned}
\]
The same estimate holds for $|Q|$, with $d_{D^*}^-(r)$ in place of
$d_{D^*}^+(q)$. Since $n/n^*\leq5/4$, the parameter hierarchy and
the choice of $n_0$ imply that
$|P|,|Q|\geq(1/2-\beta)n^*$. By the $\beta$-stability of $D^*$, we
therefore have $e_{D^*}(P,Q)>(\beta n^*)^2$. In particular, there
exist $z\in P$ and $w\in Q$ such that $zw\in A(D^*)$.

The vertices $z,w,r,s$ all lie outside $U'$. Applying the last
estimate in \eqref{4.7} to these four vertices gives
\[
\begin{aligned}
|N_A^+(z)\cap N_A^+(w)\cap
  N_A^-(r)\cap N_A^-(s)\setminus(F\cup\{a\})|
\geq
|A|-4\gamma^{1/4}|S|-|F|-1>0.
\end{aligned}
\]
Choose $b$ from this intersection. By construction, we have that $\{pq,qa,az,zw,wb,br,rs,pa,qz,aw,zb,\\ wr,bs\}\subseteq A(D)$. Therefore
$pqazwbrs$ is a $2$-path, as required.
\end{proof}

We next treat the two cases separately.
\subsection{The $\beta$-stable case}\label{subsection4.3}

\begin{lemma}\label{lem:stable-extremal}
Under the setup of Subsection~\ref{subsection4.2}, if $D^*$ is
$\beta$-stable, then $D$ contains a $2$-cycle covering all but at
most $\xi n$ vertices.
\end{lemma}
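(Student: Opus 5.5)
We follow the outline in (E2)--(E3). First we build a core $2$-cycle covering almost all of $D^*-U'$; then we insert the remaining vertices of $A$ using Claim~\ref{claim4.11}.

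\textbf{Regularity and tiling.} We apply Lemma~\ref{regular} to $D^*-U'$ with parameters $\varepsilon,d,t_0$, obtaining clusters $V_1,\dots,V_t$ of common size $m$ and the reduced digraph $R$. Let $\Gamma$ be the standard multigraph of Lemma~\ref{lemmq4.10}, so that $\mu_\Gamma(XY)$ counts the arcs of $R$ between $X$ and $Y$. Since $\delta(D^*)\ge(3/2+\gamma/2)n^*$ by \eqref{4.3}, and $|U'|\le36\gamma^{1/4}n$ is tiny compared with $\gamma n^*$ (the hierarchy needs $\gamma^{1/4}\ll\gamma$ here, or else we run the regularity step on $D^*$ itself and discard $U'$-vertices inside clusters afterwards), the degree form (iv) and a counting argument as in Lemma~\ref{reduced multigraph degree} give $\delta(\Gamma)\ge(3/2+\gamma/4)t=2(1-1/4+\gamma/8)t$. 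Lemma~\ref{lem:noextremal tiling} with $r=4$ and $\eta=\eta_0\ll\gamma$ then yields a $\overline{\mathcal K}_4$-tiling $K^1,\dots,K^s$ of $\Gamma$ covering all but $\eta_0 t$ clusters. By Lemma~\ref{lemmq4.10}, each $K^i$ can be ordered as $V^i_1,\dots,V^i_4$ so that all eight ordered pairs of $C_4^2$ are arcs of $R$, hence $\varepsilon$-regular of density at least $d$ in $D'$.

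\textbf{Long paths in tiles and connecting them.} For each $i$ we pick a starting arc $x^i_1x^i_2\in V^i_1\times V^i_2$ and an ending arc $x^i_3x^i_4\in V^i_3\times V^i_4$ that are typical in the sense of Lemma~\ref{lemma4.9}. These exist by Lemma~\ref{lem:typical-vertices}: choose $x^i_1$ typical towards $V^i_2,V^i_3$, then $x^i_2$ among its out-neighbours typical towards $V^i_3,V^i_4$, and symmetrically for the ending arc. We first join the ending arc $x^i_3x^i_4$ of $K^i$ to the starting arc $x^{i+1}_1x^{i+1}_2$ of $K^{i+1}$ (indices modulo $s$) using Lemma~\ref{lemma12}, which gives a segment $x^i_3x^i_4a_iz_iw_ib_ix^{i+1}_1x^{i+1}_2$. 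The set $F$ consists of all previously used connector vertices and prescribed endpoints, so $|F|\le 8s\le 8T$. We then remove the at most $2s$ vertices $z_i,w_i$ from their clusters and apply Lemma~\ref{lemma4.9} inside each tile. The removals cost only $O(T)$ vertices, so the hypotheses of Lemma~\ref{lemma4.9} survive, using the Remark on inherited regularity after shrinking the clusters to equal size. This produces a $2$-path from $x^i_1x^i_2$ to $x^i_3x^i_4$ that covers all but $4\eta_0 m$ vertices of $K^i$. Concatenating these paths with the connectors gives a $2$-cycle $C_0$. The vertices of $D^*$ it misses are at most $|U'|+\varepsilon n+\eta_0 tm+4\eta_0 n$, which is at most $\omega n$. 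It uses at most $2T$ vertices of $A$.

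\textbf{Inserting $A$.} Let $x\in A\setminus V(C_0)$ and let $y_1y_2y_3y_4$ be four consecutive vertices of $C_0$ lying in $D^*$. By \eqref{4.2}, $x$ has at most $2\gamma^{1/2}n$ non-out-neighbours and at most $2\gamma^{1/2}n$ non-in-neighbours in $D^*$. Each such vertex destroys at most $4$ positions, and all but $O(T)$ arcs of $C_0$ have their four surrounding vertices in $D^*$. Therefore $|\operatorname{Pos}(x)|\ge|C_0|-20\gamma^{1/2}n-O(T)\ge n/2$. Since $|A|\le n/5$, the condition $|\operatorname{Pos}(x_j)|\ge 3(j-1)+1$ in Claim~\ref{claim4.11} holds for every $j\le |A|$, so all of $A\setminus V(C_0)$ can be inserted. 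The final $2$-cycle misses at most $|S'|+\omega n\le\xi n$ vertices.

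\textbf{Main obstacle.} The delicate step is verifying the degree condition for $\Gamma$ and obtaining the typical endpoint arcs with the joint-neighbourhood bounds $\eta^2m$ that Lemma~\ref{lemma4.9} requires, while staying compatible with Lemma~\ref{lemma12}, which only controls the endpoints through $U'$. My first attempt would be to choose the typical arcs inside each tile first and only then call Lemma~\ref{lemma12} on those fixed arcs. Since Lemma~\ref{lemma12} places its new vertices $z_i,w_i$ outside $U'$, every cluster keeps size $(1-o(1))m$.
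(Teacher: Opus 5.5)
Your overall architecture is the paper's: regularity, a $\overline{\mathcal K}_4$-tiling of $\Gamma$ ordered via Lemma~\ref{lemmq4.10}, typical endpoint arcs, connectors from Lemma~\ref{lemma12}, Lemma~\ref{lemma4.9} inside each tile, and insertion via Claim~\ref{claim4.11}. However, your treatment of $U'$ has a genuine gap.

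Regularising $D^*-U'$ cannot work. Deleting $U'$ may lower total degrees by up to $2|U'|\le72\gamma^{1/4}n$, whereas the surplus in \eqref{4.3} is only $\gamma n^*/2$. So once $|U'|>\gamma n^*$, nothing keeps $\delta(D^*-U')$ above $\tfrac32|D^*-U'|$. The condition $\gamma^{1/4}\ll\gamma$ that you mention can never hold.

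Your fallback, regularising $D^*$ and then discarding the $U'$-vertices of the clusters, is not carried out and fails as stated. A cluster may lie almost entirely inside $U'$. Its remnant is then too small to inherit regularity (a remnant of relative size $\alpha$ is only $\varepsilon/\alpha$-regular), or is even empty. For tiles containing such clusters, neither Lemma~\ref{lemma4.9} nor the choice of typical endpoint arcs outside $U'$ (which Lemma~\ref{lemma12} requires) is available. Your count of missed vertices does not account for losing these tiles.

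The missing step is the paper's. Regularise $D^*$ itself, and call a cluster $V_j$ \emph{$U'$-heavy} if $|V_j\cap U'|\ge m/2$; there are at most $2|U'|/m\le200\gamma^{1/4}t$ such clusters. Discard every tile containing a $U'$-heavy cluster. This costs $O(\gamma^{1/4}n^*)$ vertices, which is affordable since $\gamma^{1/4}\ll\xi$. In the retained tiles, choose the four endpoint vertices from $V_j^i\setminus U'$, which has more than $m/2$ vertices, so typical choices exist. The remaining $U'$-vertices can simply stay in the clusters, since inside Lemma~\ref{lemma4.9} they are ordinary vertices of $D^*$.

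There are also two smaller slips:
\begin{itemize}
\item Lemma~\ref{lemma4.9} needs $\eta\ll d$, but you apply it with $\eta_0$, where $d\ll\eta_0$. Typicality only gives joint neighbourhoods of size about $d^2m$, so use something like $\eta'=d^3/1000$ with $\varepsilon'=2\varepsilon$ and $d'=d/2$.
\item The bound $|\operatorname{Pos}(x)|\ge n/2$ is too weak for Claim~\ref{claim4.11}, because $3(|A_0|-1)+1$ can approach $3n/5$. Your own count in fact gives $|\operatorname{Pos}(x)|\ge(4/5-\omega-20\gamma^{1/2})n-O(T)>3n/5\ge3|A_0|$, which is what is needed.
\end{itemize}
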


\begin{proof}
We begin by applying Lemma~\ref{regular} to $D^*$ with parameters
$\varepsilon,d,t_0$. This yields a partition
\[
V(D^*)=V_0\cup V_1\cup\cdots\cup V_t,
\qquad t_0\le t\le T,
\]
together with a pure spanning subdigraph $D'$ and the associated
reduced digraph $R$. Let $m:=|V_1|=\cdots=|V_t|$, and define a
standard multigraph $\Gamma$ on $\{V_1,\ldots,V_t\}$ by
\[
\mu_\Gamma(V_iV_j):=
\mathbf 1_{\{V_iV_j\in A(R)\}}+
\mathbf 1_{\{V_jV_i\in A(R)\}}.
\]
Thus the multiplicity of $V_iV_j$ records the number of orientations
between $V_i$ and $V_j$ that occur in $R$.

By the degree conclusion of Lemma~\ref{regular}, every
$v\in V(D^*)$ satisfies
$d_{D'}(v)>d_{D^*}(v)-2(d+\varepsilon)n^*$. Fix $i\in[t]$ and
$v\in V_i$. Since $D'[V_i]$ is empty and the contribution of $V_0$
to $d_{D'}(v)$ is at most $2|V_0|$, we have
$d_{D'}(v)\leq m d_\Gamma(V_i)+2|V_0|$. It follows that
\begin{equation*}
\delta(\Gamma)
\ge\left(\frac{\delta(D^*)}{n^*}-2d-4\varepsilon\right)t
\ge(3/2+\gamma/3)t.
\end{equation*}
By construction, $\Gamma$ is loopless and every pair of vertices
has multiplicity at most two, so $\Gamma$ is standard. Since
$\eta_0\ll\gamma$, the preceding estimate gives
$\delta(\Gamma)\geq2(1-1/4+\eta_0)t$. Moreover,
$1/t\leq1/t_0\ll\eta_0\ll1/4$. Lemma~\ref{lem:noextremal tiling},
applied with $r=4$ and parameter $\eta_0$, therefore yields an
$\overline{\mathcal K}_4$-tiling of $\Gamma$ covering all but at
most $\eta_0t$ clusters.

\smallskip

We next discard those tiles containing too many vertices of the
exceptional set $U'$. Call a cluster $V_j$ to be \emph{$U'$-heavy} if
$|V_j\cap U'|\geq m/2$, and let $b$ denote the number of
$U'$-heavy clusters. Since $bm/2\leq|U'|$, the estimates in
\eqref{4.7}, together with $m=(n^*-|V_0|)/t$, imply that
$b\leq200\gamma^{1/4}t$. Discard all clusters not covered by the
tiling, together with every tile containing a $U'$-heavy cluster,
and denote the remaining tiles by $K^1,\ldots,K^r$. The total number
of discarded vertices, including those in $V_0$, is at most
\begin{equation}\label{4.9}
(2\varepsilon+\eta_0+800\gamma^{1/4})n^*.
\end{equation}
In particular, every cluster belonging to a retained tile contains
more than $m/2$ vertices outside $U'$.

\smallskip

Fix $i\in[r]$. By Lemma~\ref{lemmq4.10}, the clusters of $K^i$ may
be labelled $V_1^i,V_2^i,V_3^i,V_4^i$ so that $R$ contains all arcs
of $C_4^2$ in this cyclic order. Set
$W_j^i:=V_j^i\setminus U'$; then $|W_j^i|>m/2$ for every
$j\in[4]$. We now select an initial arc in
$W_1^i\times W_2^i$ and a terminal arc in
$W_3^i\times W_4^i$ with the neighbourhood properties required for
Lemma~\ref{lemma4.9}.

By regularity, there exists $x_1^i\in W_1^i$ such that
$|N_{D'}^+(x_1^i)\cap W_2^i|\geq(d-\varepsilon)|W_2^i|$ and
$|N_{D'}^+(x_1^i)\cap V_3^i|\geq(d-\varepsilon)m$. Put
\[
X_2^i:=N_{D'}^+(x_1^i)\cap W_2^i,\qquad
X_3^i:=N_{D'}^+(x_1^i)\cap V_3^i.
\]
A second application of regularity gives a vertex
$x_2^i\in X_2^i$ such that
$|N_{D'}^+(x_2^i)\cap X_3^i|
\geq(d-\varepsilon)|X_3^i|$ and
$|N_{D'}^+(x_2^i)\cap V_4^i|\geq(d-\varepsilon)m$. Consequently,
$x_1^ix_2^i\in A(D')$,
$|N_{D'}^+(x_1^i)\cap N_{D'}^+(x_2^i)\cap V_3^i|
\geq(d-\varepsilon)^2m$, and
$|N_{D'}^+(x_2^i)\cap V_4^i|\geq(d-\varepsilon)m$.

The terminal arc is chosen analogously. Select
$x_4^i\in W_4^i$ such that
$|N_{D'}^-(x_4^i)\cap W_3^i|
\geq(d-\varepsilon)|W_3^i|$ and
$|N_{D'}^-(x_4^i)\cap V_2^i|\geq(d-\varepsilon)m$, and put
\[
Y_3^i:=N_{D'}^-(x_4^i)\cap W_3^i,\qquad
Y_2^i:=N_{D'}^-(x_4^i)\cap V_2^i.
\]
Choose $x_3^i\in Y_3^i$ such that
$|N_{D'}^-(x_3^i)\cap V_1^i|\geq(d-\varepsilon)m$ and
$|N_{D'}^-(x_3^i)\cap Y_2^i|
\geq(d-\varepsilon)|Y_2^i|$. We then have
$x_3^ix_4^i\in A(D')$,
$|N_{D'}^-(x_3^i)\cap V_1^i|\geq(d-\varepsilon)m$, and
$|N_{D'}^-(x_3^i)\cap N_{D'}^-(x_4^i)\cap V_2^i|
\geq(d-\varepsilon)^2m$.

\smallskip

We now join the terminal arc of each tile to the initial arc of the
next tile, with indices taken modulo $r$. At the $i$th application
of Lemma~\ref{lemma12}, let the forbidden set consist of all selected
end-vertices other than
$x_3^i,x_4^i,x_1^{i+1},x_2^{i+1}$, together with the internal
vertices of all previously constructed connecting paths. Since
$r\leq T$, this forbidden set has size at most $8T$.
Lemma~\ref{lemma12} therefore yields vertices
$a_i,b_i\in A$ and $z_i,w_i\in V(D^*)\setminus U'$ such that
$x_3^ix_4^ia_iz_iw_ib_ix_1^{i+1}x_2^{i+1}$ is a $2$-path. By the
choice of the forbidden sets, the resulting connecting paths are
pairwise internally vertex-disjoint, and their internal vertices
are disjoint from all selected end-vertices. Set
$B^{(i)}:=a_iz_iw_ib_i$ and regard this sequence as the connecting
segment from the $i$th tile to the $(i+1)$st tile.

\smallskip

The sets $W_j^i$ were introduced only to ensure that the four
selected end-vertices of each tile lie outside $U'$; the vertices
of $U'$ are not removed from the clusters used in the subsequent
embedding. We do, however, remove every vertex among
$\{z_i,w_i:i\in[r]\}$ from its cluster, since these vertices have
already been assigned to the connecting segments. We then trim the
retained clusters to a common size $m'$, without deleting any
selected end-vertex. Since there are at most $2T$ vertices of the
form $z_i$ or $w_i$ altogether, the common size may be chosen so
that $m'\geq m-2T\geq n^*/(2T)$. The total number of additional
vertices removed in this equalisation is $O(T^2)=o(n^*)$.

\smallskip

For each fixed $i$, the eight ordered pairs
\[
(V_1^i,V_2^i),(V_2^i,V_3^i),(V_3^i,V_4^i),(V_4^i,V_1^i)\quad \mbox{and}\quad (V_1^i,V_3^i),(V_3^i,V_1^i),(V_2^i,V_4^i),(V_4^i,V_2^i)
\]
were $\varepsilon$-regular with density at least $d$ before the
trimming. Since each cluster loses only $O(T)=o(m)$ vertices, the
slicing lemma implies that the resulting ordered pairs are
$2\varepsilon$-regular with density at least $d/2$. The choice of
the selected end-vertices, together with the deletion of at most
$2T=o(m)$ vertices from each cluster, also gives
\[
|N_{D'}^+(x_1^i)\cap N_{D'}^+(x_2^i)\cap V_3^i|
\geq\frac{d^2}{4}m',\qquad
|N_{D'}^+(x_2^i)\cap V_4^i|\geq\frac d3m',
\]
\[
|N_{D'}^-(x_3^i)\cap V_1^i|\geq\frac d3m',\qquad
|N_{D'}^-(x_3^i)\cap N_{D'}^-(x_4^i)\cap V_2^i|
\geq\frac{d^2}{4}m'.
\]

We may now apply Lemma~\ref{lemma4.9} with
$\varepsilon':=2\varepsilon$, $d':=d/2$, and
$\eta':=d^3/1000$. The hierarchy of constants gives
$1/m'\ll\varepsilon'\ll\eta'\ll d'$, and the preceding paragraph
verifies the eight regular-pair assumptions. Moreover,
$d^2/4\geq(\eta')^2$ and $d/3\geq\eta'$, so the selected initial
and terminal arcs satisfy the required endpoint conditions. Hence
each tile contains a $2$-path
$P_i=x_1^ix_2^i\cdots x_3^ix_4^i$ covering all but at most
$4\eta'm'=d^3m'/250\leq dm'$ vertices of its four clusters.

\smallskip

The paths $P_1,\ldots,P_r$, together with the connecting segments
$B^{(1)},\ldots,B^{(r)}$, concatenate to form the $2$-cycle
\[
C_0=P_1B^{(1)}P_2B^{(2)}\cdots P_rB^{(r)}.
\]
By \eqref{4.9}, the equalisation of the cluster sizes, and
Lemma~\ref{lemma4.9},
\begin{equation}\label{4.10}
\begin{gathered}
n^*-|V(C_0)\cap V(D^*)|
\leq(3\varepsilon+\eta_0+800\gamma^{1/4}+d)n^*+o(n^*)\ \mbox{and}\
|V(C_0)\cap S|=2r\leq2T.
\end{gathered}
\end{equation}

It remains to incorporate the unused vertices of $A$. Let
$A_0:=A\setminus V(C_0)$ and fix $a\in A_0$. We estimate the number
of insertion positions for $a$ with respect to the original cycle
$C_0$. A missing arc $xa$, where
$x\in V(C_0)\cap V(D^*)$, can affect only the two insertion tests in
which $x$ is one of the two predecessors of the inserted vertex.
Similarly, a missing arc $ax$ can affect only the two tests in which
$x$ is one of the two successors. By \eqref{4.2}, the vertex $a$
has at most $2\gamma^{1/2}n$ missing out-arcs and at most
$2\gamma^{1/2}n$ missing in-arcs to $D^*$. The missing adjacencies
between $a$ and $D^*$ therefore eliminate at most
$8\gamma^{1/2}n$ insertion positions.

We have no corresponding degree information between $a$ and the
vertices of $V(C_0)\cap S$. Nevertheless, each such vertex occurs
in at most four insertion tests, twice as a predecessor and twice
as a successor. Since \eqref{4.10} gives
$|V(C_0)\cap S|=2r\leq2T$, these vertices eliminate at most $8T$
additional positions. Consequently,
$|\operatorname{Pos}(a)|\geq|C_0|-8\gamma^{1/2}n-8T$.

\smallskip

By \eqref{4.10}, we have
$|C_0|\geq n^*-
(3\varepsilon+\eta_0+800\gamma^{1/4}+d)n^*-o(n^*)$. The parameter
choices ensure that
$(3\varepsilon+\eta_0+d)n^*<n/40$ and
$(800\gamma^{1/4}+8\gamma^{1/2})n<n/20$. After increasing $n_0$ if
necessary, we may also assume that $o(n^*)<n/40$ and $8T<n/20$.
Since $n^*\geq4n/5$, these estimates imply
$|\operatorname{Pos}(a)|>3n/5$. On the other hand,
$|A_0|\leq|S|<n/5$, and hence
$|\operatorname{Pos}(a)|>3|A_0|$ for every $a\in A_0$.

Write $A_0=\{a_1,\ldots,a_q\}$. Since the relevant cardinalities
are integers, the preceding strict inequality gives
$|\operatorname{Pos}(a_j)|\geq3q+1\geq3(j-1)+1$ for every
$j\in[q]$. Claim~\ref{claim4.11} therefore allows all vertices of
$A_0$ to be inserted into $C_0$. The resulting $2$-cycle $C$
satisfies
$$n-|C| \leq |S'|+\bigl(n^*-|V(C_0)\cap V(D^*)|\bigr)
\leq2\gamma^{1/2}n+
(3\varepsilon+\eta_0+800\gamma^{1/4}+d)n^*+o(n)
\leq\xi n.$$
This completes the proof of Lemma \ref{lem:stable-extremal}.
\end{proof}
\subsection{The non-$\beta$-stable case}\label{subsection4.4}

\begin{lemma}\label{lem:nonstable-extremal}
Under the setup of Subsection~\ref{subsection4.2}, if $D^*$ is not
$\beta$-stable, then $D$ contains a $2$-cycle covering all but at
most $\xi n$ vertices.
\end{lemma}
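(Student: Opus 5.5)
We start from sets $U_1,U_2\subseteq V(D^*)$ with $|U_i|\ge(1/2-\beta)n^*$ and $e_{D^*}(U_1,U_2)\le(\beta n^*)^2$, as given by non-$\beta$-stability, and turn them into the two-block structure of Figure~\ref{fig:extremal-two-cases}(b).

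\textbf{Structure.} By \eqref{4.4}, every vertex has out- and in-degree at least $(1/2+\gamma/2)n^*$ in $D^*$. Hence a typical vertex of $U_1$ sends almost all of its out-arcs into $V(D^*)\setminus U_2$, and a typical vertex of $U_2$ receives almost all of its in-arcs from $V(D^*)\setminus U_1$. This forces $|U_1\cap U_2|=O(\beta n^*)$; otherwise vertices of $U_1\cap U_2$ would have out-degree at most about $n^*/2$ outside $U_2$ and would also send few arcs into $U_2$, contradicting \eqref{4.4} on average. It also forces $|U_1|,|U_2|=(1/2\pm O(\beta))n^*$. Set $L_0\approx U_1$ and $R_0\approx U_2$ after moving or discarding atypical vertices, and let $\omega$ absorb the error terms.

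A double-counting argument with \eqref{4.3} then gives the full picture. Since the total degree is at least $3n^*/2$ while $(L_0,R_0)$ is sparse, each $D[L_0]$ and $D[R_0]$ is $10\omega$-almost complete, and $(R_0,L_0)$ is almost one-way complete. Vertices that violate these degree conditions form a set $B$ with $|B|=O(\omega^{1/2} n^*)$. Each vertex of $B$ still has large in- or out-degree into $L_0$ or $R_0$, and we keep it aside for possible insertion.

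\textbf{Core cycle.} Inside $D[L_0]$ (after removing $U'$ and $B$), the almost completeness and Dirac-type degree let us find a single $2$-path $P_L$ covering almost all of $L_0$. This follows either from a greedy extension or from applying Lemma~\ref{cover1} and joining the resulting paths through common neighbourhoods, which in an almost complete digraph gives direct connectors. We build $P_R$ in $R_0$ in the same way. We then close the cycle. To pass from $P_L$'s end to $P_R$'s start, we travel $L_0\to A\to R_0$: we pick $a,a'\in A$ in the common out-neighbourhoods of the last arc of $P_L$, using \eqref{4.7}, and in the common in-neighbourhoods of the first arc of $P_R$. This produces a segment $\ell\ell' a a' r r'$ once we check that $aa'$ is an arc or replace it by a pattern such as $\ell\ell'a\,x\,a'rr'$ with $x\in D^*$. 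The remaining direction, $R_0\to L_0$, uses the dense arcs from $R_0$ to $L_0$ directly: we need $r_1r_2\ell_1\ell_2$ with $r_1\ell_1,r_2\ell_1,r_2\ell_2$, and this is available by almost one-way completeness together with typicality.

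\textbf{Expected main obstacle.} The step I expect to be hardest is the connector $L_0\to R_0$ through $A$, since $A$ is almost independent. A $2$-path cannot contain two consecutive $A$-vertices without an arc between them, and any three consecutive vertices must span a transitive triangle. I will therefore use segments of the form $\ell\,\ell'\,a\,x\,y\,a'\,r\,r'$ in the spirit of Lemma~\ref{lemma12}. Here $x\in N^+(\ell')\cap N^+(a)$, $y\in N^+(x)\cap N^+(a)\cap N^-(r)$, and $x,y$ are chosen in $L_0$ or $R_0$ so that $xy$ is an arc and $x,y$ have the right adjacencies to $a'$. A convenient choice is $x\in L_0$ and $y\in R_0$, which is possible because $A$ is nearly complete to both sides and $\ell'$ has many out-neighbours in $L_0$. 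However, arcs $L_0\to R_0$ are sparse, so I would instead take $x,y\in L_0$ and then $y\to a'\to r$, with $a'$ adjacent from both $x$ and $y$ and to $r$ and $r'$.

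\textbf{Insertion and count.} We insert the unused vertices of $A$ exactly as in Lemma~\ref{lem:stable-extremal}, via Claim~\ref{claim4.11}, using \eqref{4.2} to get $|\operatorname{Pos}(a)|>3|A_0|$. The uncovered vertices are then $S'$, $U'$, $B$, and the leftovers of $P_L$ and $P_R$, all within $\xi n$.
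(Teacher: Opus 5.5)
Your structural step, the almost-spanning $2$-paths inside $L_0$ and $R_0$, the $R_0\to L_0$ connector $r_1r_2\ell_1\ell_2$, and the insertion of $A_0$ via Claim~\ref{claim4.11} are essentially the paper's. The gap is the $L_0\to R_0$ connector, which you rightly flag as the main obstacle; your proposed segment does not work.

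In $\ell\,\ell'\,a\,x\,y\,a'\,r\,r'$ the vertex $y$ is two steps before $r$, so you need $yr\in A(D)$. With $x,y\in L_0$ this is an arc from $L_0$ to $R_0$, which is exactly the sparse direction. Your other option, $\ell\,\ell'\,a\,a'\,r\,r'$, needs an arc inside $A$.

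No better choice of $x,y$ fixes this. Let $v_j$ be the first vertex of $R_0$ on a $2$-path starting with an arc of $D[L_0]$. Then $v_{j-2}v_j,v_{j-1}v_j\in A(D)$. Unless an arc from $L_0$ to $R_0$ is used, $v_{j-2}$ and $v_{j-1}$ both lie outside $L_0\cup R_0$, and they are joined by the arc $v_{j-2}v_{j-1}$. So every connector must use one of three things: an arc from $L_0$ to $R_0$, an arc of $D[A]$, or one of the $O(\omega n)$ vertices outside $A\cup L_0\cup R_0$. None of these is provided by the estimates \eqref{4.2}--\eqref{4.7} that you use.

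No asymptotic argument can supply one either. Let $n=5q$ and $V(D)=\widehat A\cup\widehat L\cup\widehat R$ with $|\widehat A|=q$ and $|\widehat L|=|\widehat R|=2q$. Make $D[\widehat A]$ empty, put no arcs from $\widehat L$ to $\widehat R$, and include every other arc. Then $\delta(D)=8q-2=\tau(n)-1$. This $D$ is $(1/5,\gamma)$-extremal, $D^*$ is not $\beta$-stable, and every estimate you use still holds, since each degrades by $O(1)$ only. By the observation above, no $2$-cycle meets both $\widehat L$ and $\widehat R$, so every $2$-cycle misses about $2n/5$ vertices. Hence the exact threshold $\tau(n)$ must enter the proof, and your proposal never uses it.

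The missing idea is the paper's Claim~\ref{claim4.16}, which argues by contradiction.
\begin{enumerate}
\item Assume no $2$-path of order at most $9$ runs from $D[L_0]$ to $D[R_0]$.
\item Then $e_D(L_0,R_0)=0$, since otherwise some $pqars$ is such a path.
\item Also $D[A]$ is independent, since otherwise some $pqabrs$ is such a path.
\item Extend to a maximal triple $(\widehat A,\widehat L,\widehat R)$ with $D[\widehat A]$ independent and $e_D(\widehat L,\widehat R)=0$.
\item Any vertex $x$ left outside the triple yields a connector through $x$, of the form $uvqaxrts$, $pqxbrts$ or $uvqaxrbts$. So no vertex is left over.
\item In that case the degree condition gives $|\widehat L|,|\widehat R|\le 2n-\tau(n)-2$, hence $|\widehat A|\ge 2\tau(n)-3n+4$. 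Independence of $\widehat A$ gives $|\widehat A|\le\lfloor n-\tau(n)/2\rfloor$. These two bounds are incompatible in every residue class of $n$ modulo $5$.
\end{enumerate}
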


\begin{proof}
The proof proceeds in three stages. We first obtain two nearly
balanced parts $L_0$ and $R_0$ which are internally dense and for
which almost all cross-arcs are directed from $R_0$ to $L_0$. We
then show that there is nevertheless a bounded $2$-path from $L_0$
to $R_0$; this is the principal difficulty, as the path runs against
the predominant orientation of the cross-arcs. Finally, we combine
this path with a short connection in the reverse direction, find
almost-spanning $2$-paths inside $L_0$ and $R_0$, and insert the
unused vertices of $A$ into the resulting $2$-cycle.

We retain the notation $S,S',A,D^*$ and $U'$ introduced above, as
well as the estimates \eqref{4.1}--\eqref{4.7} and
Claim~\ref{claim4.11}. Recall in particular that
$A=S\setminus S'$, $D^*=D-S$, and $U'$ is the exceptional set
defined immediately before \eqref{4.7}. Thus
$V(D)=V(D^*)\cup A\cup S'$. We shall also use throughout that
$\gamma^{1/4}\ll\beta\ll\omega\ll\xi$, that $300\omega<1/5$, and
that $\delta(D)\geq\tau(n)$. In particular,
$\delta^0(D)\geq\delta(D)-(n-1)\geq\tau(n)-(n-1)\geq3n/5-1$.

We begin by deriving the approximate bipartite structure forced by
the failure of $\beta$-stability.

\smallskip

\begin{claim}
There is a partition $V(D^*)=L\cup R\cup Z$ such that
\begin{equation}\label{4.13}
|Z|\leq4\beta n^*,\quad
|L|,|R|=\left(\frac12\pm2\beta\right)n^*\quad \mbox{and}
\quad e_{D^*}(L,R)\leq\beta^2(n^*)^2.
\end{equation}
\end{claim}

\begin{proof}
\renewcommand*{\qedsymbol}{$\blacksquare$}
Since $D^*$ is not $\beta$-stable, there exist sets
$X,Y\subseteq V(D^*)$ such that
$|X|,|Y|\geq(1/2-\beta)n^*$ and
$e_{D^*}(X,Y)\leq\beta^2(n^*)^2$. Define
\[
M:=X\cap Y,\qquad L:=X\setminus Y,\qquad
R:=Y\setminus X,\qquad Q:=V(D^*)\setminus(X\cup Y).
\]

For every $v\in M$, \eqref{4.3} gives
$d_{D^*}^+(v,Y)+d_{D^*}^-(v,X)
\geq d_{D^*}(v)-(n^*-|Y|)-(n^*-|X|)
\geq(1/2+\gamma/2-2\beta)n^*
\geq(1/2-3\beta)n^*$.
Summing over $v\in M$, each arc directed from $X$ to $Y$ is counted
at most twice. Hence
$|M|(1/2-3\beta)n^*\leq2e_{D^*}(X,Y)
\leq2\beta^2(n^*)^2$, and therefore $|M|\leq6\beta^2n^*$.

Furthermore,
$|Q|=n^*-|X|-|Y|+|M|\leq2\beta n^*+6\beta^2n^*
\leq3\beta n^*$. Setting $Z:=M\cup Q$, we obtain
$|Z|\leq|M|+|Q|\leq4\beta n^*$. Moreover,
\[
|L|=|X|-|M|
\geq\left(\frac12-\beta-6\beta^2\right)n^*
\geq\left(\frac12-2\beta\right)n^*,
\]
and the same lower bound holds for $|R|$. Since $L,R,Z$ partition
$V(D^*)$, these lower bounds also imply
$|L|,|R|\leq(1/2+2\beta)n^*$. Finally,
$L\subseteq X$ and $R\subseteq Y$, and hence
$e_{D^*}(L,R)\leq e_{D^*}(X,Y)\leq\beta^2(n^*)^2$, proving
\eqref{4.13}.
\end{proof}

We next remove the few vertices that do not conform to the
predominant orientation from $R$ to $L$. Define
$L_{\rm exc}:=\{x\in L:d_{D^*}^+(x,R)>\omega n^*\}$ and
$R_{\rm exc}:=\{y\in R:d_{D^*}^-(y,L)>\omega n^*\}$. By
\eqref{4.13},
\begin{equation}\label{4.15}
|L_{\rm exc}|,\ |R_{\rm exc}|
\leq\frac{\beta^2}{\omega}n^*.
\end{equation}
Set $L_0:=L\setminus(L_{\rm exc}\cup U')$ and
$R_0:=R\setminus(R_{\rm exc}\cup U')$. Using \eqref{4.15} and
$|U'|\leq36\gamma^{1/4}n$, we obtain
\[
|L\setminus L_0|,\ |R\setminus R_0|
\leq\frac{\beta^2}{\omega}n^*+36\gamma^{1/4}n
\leq\omega n^*.
\]
Together with \eqref{4.13}, the hierarchy
$\gamma^{1/4}\ll\beta\ll\omega$, and
$n^*=(4/5+\gamma)n+O(1)$, this yields
\begin{equation}\label{4.16}
|L_0|,|R_0|
=\left(\frac12\pm3\omega\right)n^*
=\left(\frac25\pm4\omega\right)n.
\end{equation}

Moreover,
$V(D^*)\setminus(L_0\cup R_0)
\subseteq Z\cup L_{\rm exc}\cup R_{\rm exc}\cup U'$. Hence
\eqref{4.13}, \eqref{4.15}, and the bound on $|U'|$ give $|V(D^*)\setminus(L_0\cup R_0)|
\leq4\beta n^*+\frac{2\beta^2}{\omega}n^*
     +36\gamma^{1/4}n
\leq8\omega n.$ Since $V(D)=V(D^*)\cup A\cup S'$ and
$|S'|\leq2\gamma^{1/2}n$, we obtain
\begin{equation}\label{4.17}
|V(D^*)\setminus(L_0\cup R_0)|\leq8\omega n,
\qquad
|V(D)\setminus(A\cup L_0\cup R_0)|\leq10\omega n.
\end{equation}

Thus, apart from at most $10\omega n$ vertices, the vertex set of
$D$ is partitioned into $A,L_0$ and $R_0$. The following claim
records the degree properties of these three parts that will be used
throughout the remainder of the proof.

\smallskip

\begin{claim}
For each $v\in L_0\cup R_0$, let $B_v$ be the member of
$\{L_0,R_0\}$ containing $v$, let $\overline B_v$ be the other
member, and set $\sigma_v:=-$ if $v\in L_0$ and $\sigma_v:=+$ if
$v\in R_0$. Then
\begin{equation}\label{4.18}
d_{B_v}^{\pm}(v)\geq|B_v|-10\omega n,
\qquad
d_{\overline B_v}^{\sigma_v}(v)
\geq|\overline B_v|-10\omega n.
\end{equation}
Moreover, we get that
\begin{align}
&d_A^\pm(v)\geq|A|-\gamma^{1/4}|S|
\quad\text{for every }v\in L_0\cup R_0,\ \mbox{and}\label{4.22} \\
&d_D^\pm(a,L_0)\geq|L_0|-3\gamma^{1/2}n
\quad \mbox{and}\quad
d_D^\pm(a,R_0)\geq|R_0|-3\gamma^{1/2}n
\quad\text{for every }a\in A. \label{4.23}
\end{align}
\end{claim}

\begin{proof}
\renewcommand*{\qedsymbol}{$\blacksquare$}
We prove \eqref{4.18} for a vertex $x\in L_0$. The corresponding
assertions for vertices of $R_0$ follow by symmetry, after
interchanging $L$ and $R$ and reversing the orientation of every
arc.

Since $x\notin L_{\rm exc}$, we have
$d_{D^*}^+(x,R)\leq\omega n^*$. Hence \eqref{4.4} gives
\[
d_{D^*}^+(x,L)
\geq d_{D^*}^+(x)-d_{D^*}^+(x,R)-|Z|
\geq\left(\frac12+\frac{\gamma}{2}\right)n^*
     -\omega n^*-4\beta n^*
\geq|L|-2\omega n^*.
\]
For the last inequality, we used
$|L|\leq(1/2+2\beta)n^*$ and $\beta\ll\omega$.

We next estimate the indegrees of $x$ using \eqref{4.3}. Since
$d_{D^*}^+(x,L)\leq|L|-1$,
$d_{D^*}^+(x,R)\leq\omega n^*$,
$d_{D^*}^-(x,R)\leq|R|$, and $d_{D^*}(x,Z)\leq2|Z|$, we have that $d_{D^*}(x)
\leq d_{D^*}^-(x,L)+(|L|-1)+\omega n^*+|R|+2|Z|.$ It follows that
$d_{D^*}^-(x,L)\geq
\delta(D^*)-(|L|-1)-|R|-\omega n^*-2|Z|
\geq|L|-3\omega n^*$. Indeed, after subtracting
$|L|-3\omega n^*$, the resulting lower bound is at least
$(\gamma/2+2\omega-6\beta)n^*>0$, where we used
$|L|+|Z|\leq(1/2+6\beta)n^*$.

\smallskip

Similarly,
$d_{D^*}(x)\leq d_{D^*}^-(x,R)+2|L|+\omega n^*+2|Z|$, and hence
$d_{D^*}^-(x,R)\geq|R|-3\omega n^*$. This follows from the same
calculation, since
$\delta(D^*)-n^*-|L|-|Z|+2\omega n^*
\geq(\gamma/2+2\omega-6\beta)n^*>0$.

\smallskip

Finally, since
$|L\setminus L_0|,|R\setminus R_0|\leq\omega n^*$, restricting the
preceding estimates to $L_0$ and $R_0$ gives
$d_{L_0}^+(x)\geq|L_0|-2\omega n^*$,
$d_{L_0}^-(x)\geq|L_0|-3\omega n^*$, and
$d_{R_0}^-(x)\geq|R_0|-3\omega n^*$. Since $n^*\leq n$, these
bounds imply \eqref{4.18} for $x\in L_0$, and the symmetric argument
proves the assertion for vertices of $R_0$.

\smallskip

Estimate \eqref{4.22} follows directly from \eqref{4.7}, since
$L_0\cup R_0\subseteq V(D^*)\setminus U'$. Finally, \eqref{4.2}
shows that every $a\in A$ has at most $2\gamma^{1/2}n$ missing
out-neighbours and at most $2\gamma^{1/2}n$ missing in-neighbours in
$V(D^*)$. Thus
$d_D^\pm(a,L_0)\geq|L_0|-2\gamma^{1/2}n$ and
$d_D^\pm(a,R_0)\geq|R_0|-2\gamma^{1/2}n$, which are stronger than
\eqref{4.23}.
\end{proof}

We next record a simple consequence of an almost-complete minimum
semidegree condition.

\smallskip

\begin{claim}\label{claim4.15}
Let $G$ be a digraph of order $m$, let $k\in\mathbb N$, and suppose
that $\delta^0(G)\geq m-k$ and $m\geq10k+6$. Then, for any two
disjoint arcs $a_1a_2,b_1b_2\in A(G)$, there is a $2$-path from
$a_1a_2$ to $b_1b_2$ leaving at most $5k$ vertices of $G$
uncovered.
\end{claim}

\begin{proof}
\renewcommand*{\qedsymbol}{$\blacksquare$}
Begin with the $2$-path $a_1a_2$, keeping $b_1$ and $b_2$ outside
the path until the final step. At any stage, let $W$ be the set of
vertices not yet used on the path, excluding $b_1$ and $b_2$.
Initially, $|W|=m-4\geq10k+2>5k+2$. Suppose that the current
terminal arc is $xy$. Whenever $|W|>5k+2$, the bound
$|N_G^+(x)\cap N_G^+(y)\cap W|\geq|W|-2k>0$ allows us to extend the
path by one vertex of $W$.

Continue until $|W|=5k+2$. There are at least
$|W|-3k\geq2k+2$ choices for a vertex
$u\in W\cap N_G^+(x)\cap N_G^+(y)\cap N_G^-(b_1)$. Having chosen
$u$, there are at least $|W|-1-4k\geq k+1$ choices for a vertex
$v\in(W\setminus\{u\})\cap N_G^+(y)\cap N_G^+(u)
\cap N_G^-(b_1)\cap N_G^-(b_2)$. Extending the path by
$u,v,b_1,b_2$ completes the construction, and precisely
$|W|-2=5k$ vertices remain uncovered.
\end{proof}

We now show that the predominant orientation from $R_0$ to $L_0$
cannot prevent a short connection in the opposite direction.

\smallskip

\begin{claim}\label{claim4.16}
There is a $2$-path $B^+$ of order at most $9$ whose initial arc
lies in $D[L_0]$ and whose terminal arc lies in $D[R_0]$.
\end{claim}

\begin{proof}
\renewcommand*{\qedsymbol}{$\blacksquare$}
Suppose, to the contrary, that no such $2$-path exists. The hierarchy
of constants and \eqref{4.16} ensure that
\[
|A|-4\gamma^{1/4}|S|
\geq\left(\frac15-3\gamma^{1/2}-\frac45\gamma^{1/4}\right)n>0
\]
and that both
$|L_0|-10\omega n-3\gamma^{1/2}n-1$ and
$|R_0|-10\omega n-3\gamma^{1/2}n-1$ are positive for sufficiently
large $n$.

We first derive two consequences of this assumption. The first is that
$e_D(L_0,R_0)=0$. Indeed, suppose that $qr\in A(D)$ for some $q\in L_0$ and
$r\in R_0$. By \eqref{4.18}, we may choose
$p\in N_D^-(q,L_0)$ and $s\in N_D^+(r,R_0)$. Since
$p,q,r,s\notin U'$, \eqref{4.22} gives $|N_A^+(p)\cap N_A^+(q)\cap N_A^-(r)\cap N_A^-(s)|
\geq|A|-4\gamma^{1/4}|S|>0.$ Choosing $a$ in this intersection produces the forbidden $2$-path
$pqars$, a contradiction.

\smallskip

The second consequence is that $D[A]$ is independent. Suppose instead that $ab\in A(D[A])$. By \eqref{4.23}, the set
$N_D^-(a,L_0)\cap N_D^-(b,L_0)$ is nonempty; choose $q$ from this
intersection. The estimates \eqref{4.18} and \eqref{4.23} then give $|N_D^-(q,L_0)\cap N_D^-(a,L_0)\setminus\{q\}|
\geq|L_0|-10\omega n-3\gamma^{1/2}n-1>0,$ so we may choose $p$ in this set. Similarly, choose
$r\in N_D^+(a,R_0)\cap N_D^+(b,R_0)$ and then, using
\eqref{4.18} and \eqref{4.23}, choose $s\in N_D^+(r,R_0)\cap N_D^+(b,R_0)\setminus\{r\}.$ By construction, $pqabrs$ is a $2$-path from $L_0$ to $R_0$, again
a contradiction.

We now extend the three-part structure as far as possible. Consider
triples $(\widehat A,\widehat L,\widehat R)$ of pairwise disjoint
vertex sets satisfying $A\subseteq\widehat A$,
$L_0\subseteq\widehat L$, $R_0\subseteq\widehat R$, and
\begin{equation}\label{4.30}
D[\widehat A]\text{ is independent},
\qquad e_D(\widehat L,\widehat R)=0.
\end{equation}
Such a triple exists by the preceding two conclusions. Choose one
maximising $|\widehat A|+|\widehat L|+|\widehat R|$. Write
$\widehat a:=|\widehat A|$, $\widehat\ell:=|\widehat L|$,
$\widehat r:=|\widehat R|$, and let
$U:=V(D)\setminus(\widehat A\cup\widehat L\cup\widehat R)$ and
$u:=|U|$. The lower bounds in \eqref{4.1} and \eqref{4.16}, the
disjointness of the three sets, and \eqref{4.17} yield
$\left|\widehat a-\frac n5\right|,
\left|\widehat\ell-\frac{2n}{5}\right|,
\left|\widehat r-\frac{2n}{5}\right|,
u\leq20\omega n$.

Set $\Delta:=50\omega n+3$. We next show that every vertex of the
three maximal parts is adjacent to almost every vertex to which an
arc is permitted by \eqref{4.30}. If $a\in\widehat A$, then
\begin{align}\label{4.32}
2(n-\widehat a)-d_D(a)
\leq2(n-\widehat a)-\delta(D)
\leq2\left(n-\left(\frac15-20\omega\right)n\right)-\tau(n)
\leq40\omega n+2<\Delta.
\end{align}
For $\ell\in\widehat L$, the only forbidden incident arcs are those
directed from $\ell$ to $\widehat R$, so its maximum possible total
degree subject to \eqref{4.30} is
$2(\widehat\ell-1)+\widehat r+2\widehat a+2u
=2n-\widehat r-2$. Hence
$2n-\widehat r-2-d_D(\ell)
\leq2n-\widehat r-2-\delta(D)
\leq2n-(2/5-20\omega)n-2-\tau(n)
\leq20\omega n<\Delta$. Similarly, every $r\in\widehat R$ satisfies
$2n-\widehat\ell-2-d_D(r)
\leq2n-\widehat\ell-2-\delta(D)
\leq2n-(2/5-20\omega)n-2-\tau(n)
\leq20\omega n<\Delta$.
Thus each vertex of
$\widehat A\cup\widehat L\cup\widehat R$ is missing at most
$\Delta$ incident arcs permitted by \eqref{4.30}. We shall also use
that \eqref{4.17} implies
$|\widehat L\setminus L_0|,|\widehat R\setminus R_0|
\leq10\omega n<\Delta$, while \eqref{4.16} and
$300\omega<1/5$ give $|L_0|,|R_0|>2\Delta+9$ for sufficiently large
$n$.

The following selection principle will be used repeatedly. Suppose
that $W$ is contained in one of the three maximal parts, that $F$
is a set of previously selected vertices, and that a vertex
$w\in W\setminus F$ is required to satisfy $j$ prescribed
adjacency conditions with fixed vertices of
$\widehat A\cup\widehat L\cup\widehat R$. If all the required arcs
are permitted by \eqref{4.30}, then at most $j\Delta+|F\cap W|$
vertices of $W$ are unavailable. Thus a suitable choice exists
whenever
\begin{align}\label{4.34a}
|W|>j\Delta+|F\cap W|.
\end{align}
In every application below, $j\leq2$ and fewer than nine vertices
have already been chosen. The stronger condition
\begin{align}\label{4.34b}
|W|>2\Delta+9
\end{align}
is therefore sufficient throughout.

Suppose that $U\neq\emptyset$, and fix $x\in U$. Define
$I_L:=N_D^-(x,\widehat L)$,
$O_R:=N_D^+(x,\widehat R)$,
$I_A:=N_D^-(x,\widehat A)$, and
$O_A:=N_D^+(x,\widehat A)$. The maximality of the triple gives
$I_L\neq\emptyset$ and $O_R\neq\emptyset$, since otherwise $x$
could be added to $\widehat R$ or $\widehat L$, respectively. It also gives $I_A\cup O_A\neq\emptyset$, for otherwise $x$ could
be added to $\widehat A$.

All in-neighbours of $x$ outside $I_L\cup I_A$ lie in
$\widehat R\cup(U\setminus\{x\})$. Using the minimum semidegree of $D$ and recalling that
$\left|\widehat r-2n/5\right|,u\leq20\omega n$, we obtain
\begin{equation}\label{4.39}
|I_L|+|I_A|
\geq\tau(n)-(n-1)-\widehat r-u
\geq\left(\frac15-40\omega\right)n-1>4\Delta+4.
\end{equation}
The analogous outdegree estimate gives
\begin{equation}\label{4.40}
|O_A|+|O_R|
\geq\left(\frac15-40\omega\right)n-1>4\Delta+4.
\end{equation}
We now obtain a contradiction in each of the following three
subcases.

\medskip

\begin{case}\label{subcase2.1}
$I_A\neq\emptyset$ and $|O_R|>2\Delta+2$.
\end{case}

We first choose $q\in I_L$ and $a\in I_A$ such that $qa\in A(D)$.
If $|I_L|>\Delta$, fix any $a\in I_A$ and use \eqref{4.32}; at
most $\Delta$ vertices of $I_L$ fail to send an arc to $a$. If
$|I_L|\leq\Delta$, then \eqref{4.39} gives
$|I_A|>3\Delta+4$. Fixing any $q\in I_L$, the preceding deficit bound for vertices of
$\widehat L$ shows that at most $\Delta$ vertices of $I_A$ fail to
receive an arc from $q$.
Thus the required pair $q,a$ exists in either case.

Since $|\widehat R\setminus R_0|<\Delta$, we have
$|O_R\cap R_0|>\Delta+2$. Using the selection principle, choose
$r\in(O_R\cap R_0)\cap N_D^+(a)$ and then
$t\in(O_R\cap R_0\setminus\{r\})\cap N_D^+(r)$. Next choose,
avoiding all previously selected vertices,
\[
v\in N_D^-(q,L_0)\cap N_D^-(a,L_0),\quad
u\in N_D^-(v,L_0)\cap N_D^-(q,L_0),\quad
s\in N_D^+(r,R_0)\cap N_D^+(t,R_0).
\]
Each of these choices follows from \eqref{4.34a}--\eqref{4.34b}.
The resulting sequence $uvqaxrts$ is a $2$-path from $L_0$ to
$R_0$ of order eight, a contradiction.

\medskip

\begin{case}\label{subcase2.2}
$O_A\neq\emptyset$ and $|I_L|>2\Delta+2$.
\end{case}

We first choose $b\in O_A$ and $r\in O_R$ such that $br\in A(D)$.
If $|O_A|>\Delta$, fix any $r\in O_R$ and apply the preceding
deficit bound for vertices of $\widehat R$; if $|O_A|\leq\Delta$, then \eqref{4.40} gives
$|O_R|>3\Delta+4$, and \eqref{4.32} applies after fixing any
$b\in O_A$. Thus such a pair $b,r$ always exists.

Since $|\widehat L\setminus L_0|<\Delta$, we have
$|I_L\cap L_0|>\Delta+2$. Choose
$q\in(I_L\cap L_0)\cap N_D^-(b)$ and then
$p\in(I_L\cap L_0\setminus\{q\})\cap N_D^-(q)$. Finally choose,
avoiding all previously selected vertices,
\[
t\in N_D^+(r,R_0)\cap N_D^+(b,R_0),\qquad
s\in N_D^+(r,R_0)\cap N_D^+(t,R_0).
\]
Again \eqref{4.34a}--\eqref{4.34b} guarantee all choices. The
sequence $pqxbrts$ is a $2$-path from $L_0$ to $R_0$ of order
seven, a contradiction.

\medskip

\begin{case}
Neither Subcase~\ref{subcase2.1} nor
Subcase~\ref{subcase2.2} holds.
\end{case}

We first note that $I_A$ and $O_A$ are both nonempty. Indeed, if
$I_A=\emptyset$, then the maximality of the triple gives
$O_A\neq\emptyset$, while
\eqref{4.39} gives $|I_L|>4\Delta+4$, placing us in
Subcase~\ref{subcase2.2}. The proof that $O_A\neq\emptyset$ is
symmetric. Since neither of the first two subcases applies,
\eqref{4.39} and \eqref{4.40} now yield
$|I_A|>2\Delta+2$ and $|O_A|>2\Delta+2$.
Choose arbitrary vertices $q\in I_L$ and $r\in O_R$. By these
bounds and the deficit estimates, we may choose
$a\in I_A\cap N_D^+(q)\cap N_D^-(r)$ and then
$b\in(O_A\setminus\{a\})\cap N_D^+(r)$. Using the selection
principle, choose successively
\begin{align}
&v\in N_D^-(q,L_0)\cap N_D^-(a,L_0),\quad
u\in N_D^-(v,L_0)\cap N_D^-(q,L_0),\nonumber\\
&t\in N_D^+(r,R_0)\cap N_D^+(b,R_0)\quad \mbox{and}\quad
s\in N_D^+(b,R_0)\cap N_D^+(t,R_0),\nonumber
\end{align}
avoiding all previously selected vertices. The sequence
$uvqaxrbts$ is a $2$-path from $L_0$ to $R_0$ of order nine, again
a contradiction.

It follows that $U=\emptyset$. Thus
$V(D)=\widehat A\cup\widehat L\cup\widehat R$, with
$D[\widehat A]$ independent and $e_D(\widehat L,\widehat R)=0$.
For every $\ell\in\widehat L$, we have
$d_D(\ell)\leq2(|\widehat L|-1)+|\widehat R|+2|\widehat A|
=2n-|\widehat R|-2$. Hence
$|\widehat R|\leq2n-\tau(n)-2$, and symmetrically
$|\widehat L|\leq2n-\tau(n)-2$. Consequently, we have that $|\widehat A|=n-|\widehat L|-|\widehat R|
\geq2\tau(n)-3n+4.$ On the other hand, the independence of $D[\widehat A]$ gives
$d_D(a)\leq2(n-|\widehat A|)$ for every $a\in\widehat A$, and
therefore $|\widehat A|\leq\lfloor n-\tau(n)/2\rfloor$.
Writing $n=5q+r$, where $0\leq r\leq4$, the two bounds become
\[
\begin{array}{c|ccccc}
r&0&1&2&3&4\\ \hline
\tau(n)&8q-1&8q+1&8q+2&8q+4&8q+5\\
2\tau(n)-3n+4&q+2&q+3&q+2&q+3&q+2\\
\lfloor n-\tau(n)/2\rfloor&q&q&q+1&q+1&q+1
\end{array}
\]
In every residue class, the lower bound exceeds the upper bound,
a final contradiction. This proves Claim~\ref{claim4.16}.
\end{proof}

Let $B^+$ be a $2$-path supplied by Claim~\ref{claim4.16}, with
initial arc $\ell_1\ell_2\in A(D[L_0])$ and terminal arc
$r_1r_2\in A(D[R_0])$. We next construct a disjoint $2$-path in the
reverse direction. Choose $r_3\in R_0\setminus V(B^+)$ and, using
\eqref{4.18}, choose
$r_4\in N_D^+(r_3,R_0)\setminus V(B^+)$. Again by \eqref{4.18},
$r_3$ and $r_4$ have at least $|L_0|-20\omega n$ common
out-neighbours in $L_0$; choose one of them as $\ell_3$, avoiding
$V(B^+)$. Finally, the same estimate gives a vertex
$\ell_4\in N_D^+(r_4,L_0)\cap N_D^+(\ell_3,L_0)$ outside all
previously selected vertices. Since $B^+$ has bounded order, all
these choices are possible for sufficiently large $n$. Set $B^-:=r_3r_4\ell_3\ell_4$.

\smallskip

Remove from $L_0$ all vertices of $B^+\cup B^-$ other than
$\ell_1,\ell_2,\ell_3,\ell_4$, and denote the resulting set by
$L^\circ$. Define $R^\circ$ analogously, retaining
$r_1,r_2,r_3,r_4$. Since only the internal vertices of the two
bounded connecting paths are removed, \eqref{4.18} gives
$\delta^0(D[L^\circ])\geq|L^\circ|-10\omega n$ and
$\delta^0(D[R^\circ])\geq|R^\circ|-10\omega n$. In particular,
both semidegrees are at least the relevant order minus
$20\omega n$.

\smallskip

Set $k:=\lceil20\omega n\rceil$. By \eqref{4.16} and the bounded
orders of $B^+$ and $B^-$, for sufficiently large $n$ we have
$|L^\circ|,|R^\circ|\geq(2/5-5\omega)n$. Moreover,
$10k+6\leq200\omega n+16$, and the assumption $300\omega<1/5$
ensures that $(2/5-5\omega)n\geq10k+6$ for sufficiently large $n$.
Claim~\ref{claim4.15} therefore yields a $2$-path $P_L$ in
$D[L^\circ]$ from $\ell_3\ell_4$ to $\ell_1\ell_2$ leaving at most
$5k\leq100\omega n+5$ vertices of $L^\circ$ uncovered. Similarly,
there is a $2$-path $P_R$ in $D[R^\circ]$ from $r_1r_2$ to
$r_3r_4$ leaving at most $100\omega n+5$ vertices of $R^\circ$
uncovered.

\smallskip

Identifying each terminal arc with the corresponding initial arc,
the four paths concatenate to form the $2$-cycle
$C_0:=P_LB^+P_RB^-$. Every vertex of
$(L_0\cup R_0)\setminus(L^\circ\cup R^\circ)$ is an internal vertex
of one of the two connecting paths and therefore lies on $C_0$.
Consequently,
$V(D^*)\setminus V(C_0)$ is contained in the union of
$V(D^*)\setminus(L_0\cup R_0)$,
$L^\circ\setminus V(P_L)$, and $R^\circ\setminus V(P_R)$. By
\eqref{4.17} and the two applications of Claim~\ref{claim4.15}, for
sufficiently large $n$ we have
$|V(D^*)\setminus V(C_0)|
=n^*-|V(C_0)\cap V(D^*)|
\leq8\omega n+2(100\omega n+5)
\leq220\omega n$.
Only $B^+$ can contain vertices of $S$, and $|V(B^+)|\leq9$; hence
$|V(C_0)\cap S|\leq9$.

\smallskip

It remains to insert the vertices of
$A_0:=A\setminus V(C_0)$. Fix $a\in A_0$ and define
$\operatorname{Pos}(a)$ with respect to the original cycle $C_0$,
as in Claim~\ref{claim4.11}. By \eqref{4.2}, the vertex $a$ has at
most $2\gamma^{1/2}n$ missing out-neighbours and at most
$2\gamma^{1/2}n$ missing in-neighbours in $V(D^*)$. Each missing
directed adjacency eliminates at most two insertion positions, so
the missing adjacencies between $a$ and $V(D^*)$ eliminate at most
$8\gamma^{1/2}n$ positions. Each vertex of $V(C_0)\cap S$ can
eliminate at most four further positions. Therefore
$|\operatorname{Pos}(a)|
\geq|C_0|-8\gamma^{1/2}n-36$.
Recall that $n^*-|V(C_0)\cap V(D^*)|\leq220\omega n$. By the
definition of $n^*$, it follows that
$|C_0|\geq n^*-220\omega n
\geq(4/5+\gamma-220\omega)n$. On the other hand,
$|A_0|\leq|S|\leq(1/5-\gamma)n$. Combining these estimates gives
\[
|\operatorname{Pos}(a)|
\geq\left(\frac45+\gamma-220\omega-8\gamma^{1/2}\right)n-36
>3|A_0|,
\]
where the final inequality follows from $300\omega<1/5$,
$\gamma^{1/2}\ll\omega$, and the choice of $n_0$.

Write $A_0=\{a_1,\ldots,a_q\}$. By integrality,
$|\operatorname{Pos}(a_j)|\geq3q+1\geq3(j-1)+1$ for every
$j\in[q]$. Claim~\ref{claim4.11} therefore allows us to insert all
vertices of $A_0$ into $C_0$. Let $C$ be the resulting $2$-cycle.
Since every vertex of $A$ now lies on $C$, we have
$V(D)\setminus V(C)\subseteq S'\cup(V(D^*)\setminus V(C_0))$.
Consequently,
\[
n-|C|
\leq|S'|+\bigl(n^*-|V(C_0)\cap V(D^*)|\bigr)
\leq2\gamma^{1/2}n+220\omega n
\leq\xi n.
\]
This completes the proof of Lemma \ref{lem:nonstable-extremal}.
\end{proof}

\begin{proof}[\textbf{Proof of Theorem~\ref{acycle}}]
The common setup of Subsection~\ref{subsection4.2} applies. If
$D^*$ is $\beta$-stable, the conclusion follows from
Lemma~\ref{lem:stable-extremal}; otherwise it follows from
Lemma~\ref{lem:nonstable-extremal}.
\end{proof}

\medskip
\section{Completion of the proof of Theorem \ref{szc2}}\label{Section5}
We first isolate the completion argument common to the extremal and non-extremal cases.

\begin{lemma}\label{completion-lemma}
Let $D$ be an $n$-vertex digraph with $\delta(D)\geq\tau(n)$. If $D$ contains a $2$-cycle leaving fewer than $n/5$ vertices uncovered, then $D$ contains the square of a Hamilton cycle.
\end{lemma}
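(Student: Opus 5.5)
Take a $2$-cycle $C$ in $D$ of maximum order; by hypothesis $|C|>4n/5$. Suppose, for a contradiction, that $C$ is not spanning, and fix $x\in V(D)\setminus V(C)$. Write $C=y_1y_2\cdots y_m y_1$ with $m:=|C|$. We will show that $x$ can be inserted at some arc $y_iy_{i+1}$, that is, $y_{i-1}x,\,y_ix,\,xy_{i+1},\,xy_{i+2}\in A(D)$. This yields a larger $2$-cycle and contradicts maximality. Since $|C|$ strictly increases at each such step, this argument shows that a maximum-order $2$-cycle is Hamiltonian.

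\emph{Step 1: counting failed insertion positions.} Let $p$ be the number of arcs of $D$ between $x$ and $V(C)$. Set $\bar p:=2m-p$, which counts the missing ordered pairs $yx$ or $xy$ with $y\in V(C)$. A missing in-arc $y_jx$ destroys only the two positions $i\in\{j,j+1\}$, since $y_j$ appears there as one of the two predecessors. Similarly, a missing out-arc $xy_j$ destroys only the positions $i\in\{j-1,j-2\}$. Hence the number of valid insertion positions is at least $m-2\bar p$.

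Since $x$ has at most $2(n-m-1)$ incident arcs outside $C$, we get $p\ge \tau(n)-2(n-m-1)$. Therefore
\[
\bar p\le 2m-\tau(n)+2(n-m-1)=2n-2-\tau(n)\le \tfrac{2n}{5}.
\]
This gives only $m-2\bar p\ge m-4n/5$, which is positive because $m>4n/5$. So the crude count already succeeds whenever the bound is strict, and all the delicate work lies in the boundary situation.

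\emph{Step 2: the main obstacle.} The count above is lossy because a missing pair may kill two positions that are already killed by another missing pair. To handle the tight cases with integer precision, we argue as follows. Suppose no position is valid. Then every $i$ has a ``witness'' among its four required arcs, so the set $M$ of missing pairs covers all $m$ positions. Each missing in-arc covers a window of two consecutive positions, and likewise each missing out-arc, so $2|M|\ge m$.

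Combined with $|M|=\bar p\le 2n-2-\tau(n)$, this gives $m\le 2(2n-2-\tau(n))$. Using $\tau(n)\ge 8n/5-2$, we obtain $m\le 4n/5+\,$small constant. One then checks, by residue of $n$ modulo $5$ and using the exact value of $\tau(n)$ as in the table in the proof of Claim~\ref{claim4.16}, that $2(2n-2-\tau(n))<m$ whenever $m>4n/5$. This contradicts the covering bound, so some position is valid.

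If the integer slack fails in some residue class, I would refine the count. The estimate $p\ge\tau(n)-2(n-m-1)$ is tight only if $x$ forms digons with every uncovered vertex. In that case one can choose $x$ among the uncovered vertices to have the fewest arcs inside $V(D)\setminus V(C)$, or use the fact that the windows of in-arcs and out-arcs are offset so that exact covering forces a periodic pattern. Either approach recovers one extra unit.

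Once $x$ is inserted, the enlarged $2$-cycle contradicts the maximality of $C$. Hence $C$ is a square of a Hamilton cycle, which proves Lemma~\ref{completion-lemma}.
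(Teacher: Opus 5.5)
Your Step~1 is already a complete proof and is exactly the paper's argument: your count $m-2\bar p=2p-3m$ of valid insertion positions is the paper's inequality $2d_D(v,V(C))\le 3m$ for a vertex that cannot be inserted, and your bound $m-2\bar p\ge m-4n/5=n/5-r>0$ is twice the paper's $n/10-r/2>0$. There is no boundary case, because $\tau(n)\ge 8n/5-2$ gives $2(2n-2-\tau(n))\le 4n/5<m$ with no additive constant, and the strict inequality comes from the hypothesis $r<n/5$; so Step~2, the residue-class check and the proposed refinements are unnecessary and can be deleted.
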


\begin{proof}
Choose a $2$-cycle $C=c_1\cdots c_mc_1$ of maximum order in $D$, with indices read modulo $m$, and put $r:=n-m$. By hypothesis, $r<n/5$.

We first observe that if $v\notin V(C)$ and $d_D(v,V(C))>3m/2$, then $v$ can be inserted into $C$. Indeed, suppose that no such insertion is possible. For every $i\in[m]$, inserting $v$ between $c_{i+1}$ and $c_{i+2}$ would require the four arcs $c_iv,c_{i+1}v,vc_{i+2},vc_{i+3}$. Hence at least one of these arcs is absent, and therefore
\[
\bigl|\{c_i,c_{i+1}\}\cap N_D^-(v)\bigr|+
\bigl|\{c_{i+2},c_{i+3}\}\cap N_D^+(v)\bigr|\leq3.
\]
Summing over all $i\in[m]$, every in-neighbor and every out-neighbor of $v$ on $C$ is counted exactly twice. Thus $2d_D(v,V(C))\leq3m$, a contradiction. Therefore all four required arcs are present for some $i$, and inserting $v$ at that position produces a $2$-cycle of order $m+1$.

Suppose that $r>0$, and choose $v\in V(D)\setminus V(C)$. There are at most $2(r-1)$ arcs between $v$ and the other uncovered vertices. Hence
\[
d_D(v,V(C))\geq\delta(D)-2(r-1)\geq\frac{8n}{5}-2r,
\]
where the last inequality follows from $\tau(n)\geq8n/5-2$. Since $m=n-r$ and $r<n/5$, we have that
\[
\frac{8n}{5}-2r-\frac{3m}{2}=\frac{n}{10}-\frac r2>0.
\]
Thus $d_D(v,V(C))>3m/2$, so $v$ can be inserted into $C$, contradicting the maximality of $C$. Hence $r=0$, and $C$ is the square of a Hamilton cycle.
\end{proof}

\begin{proof}[\textbf{Proof of Theorem~\ref{szc2}}]
Choose constants satisfying $0<1/n_0\ll\eta_0\ll\gamma\ll\xi\ll1/5$, with $n_0$ large enough for Lemma~\ref{nonextremal-cycle} and Theorem~\ref{acycle}. Let $D$ be an $n$-vertex digraph with $n\geq n_0$ and $\delta(D)\geq\tau(n)$.

If $D$ is not $(1/5,\gamma)$-extremal, Lemma~\ref{nonextremal-cycle} gives a $2$-cycle covering all but at most $\xi n$ vertices. If $D$ is $(1/5,\gamma)$-extremal, the same conclusion follows from Theorem~\ref{acycle}. Since $\xi<1/5$, in either case the resulting $2$-cycle leaves fewer than $n/5$ vertices uncovered. Lemma~\ref{completion-lemma} therefore yields the square of a Hamilton cycle in $D$.
\end{proof}

\section{Remarks}\label{Section6}

As the conclusion of this paper, we turn to factor problems. Recall that the $k$th power of a digraph $D$ is obtained by adding the arc $xy$ whenever $D$ contains a directed path from $x$ to $y$ of length at most $k$. Let $C_l^k$ denote the $k$th power of a directed cycle on $l$ vertices. A \emph{$C_l^k$-tiling} is a collection of disjoint copies of $C_l^k$, and it is a \emph{$C_l^k$-factor} if it covers all vertices of the host digraph.

The general transfer theorem of Lo~\cite{Allan} already yields asymptotic factor results from corresponding spanning-subgraph theorems. In particular, for every fixed $\varepsilon>0$ and all sufficiently large $l$, every digraph $D$ satisfying $\delta(D)\ge(8/5+\varepsilon)|D|$ contains a $C_l^2$-factor whenever $l$ divides $|D|$. Thus the asymptotic factor problem is covered by Lo's result, whereas determining the exact factor threshold without an additional linear error term remains open. This motivates the following questions. \begin{question} For every $k\ge2$, does there exist $l_0=l_0(k)$ such that, for every $l\ge l_0$ and every $n=lm$, each $n$-vertex digraph $D$ satisfying $\delta(D)\ge\frac{2(k+2)}{k+3}n$ contains a $C_l^k$-factor?\end{question}

A natural variant asks for a corresponding minimum semi-degree condition.

\begin{question}
For every $k\ge2$, does there exist $l_0=l_0(k)$ such that, for every $l\ge l_0$ and every $n=lm$, each $n$-vertex digraph $D$ satisfying $\delta^0(D)\ge\frac{k}{k+1}n$ contains a $C_l^k$-factor?
\end{question}

\bigskip

\noindent \textbf{Acknowledgments.} We are deeply grateful to Yangyang Cheng for his invaluable guidance and constructive suggestions concerning the analysis of the extremal case.

\end{document}